\title{A simplicial foundation for \\ differential and sector forms\\\vspace{3.7pt}in tangent categories}
\author{G. S. H. Cruttwell and R. B. B. Lucyshyn-Wright}
\thanks{First author supported by an NSERC Discovery Grant and second author by an AARMS Postdoctoral Fellowship.  The second author gratefully acknowledges further financial support in the form of a Mount Allison University Research Stipend.  Thanks to Robin Cockett for useful discussions.}
\address{Department of Mathematics and Computer Science, \\ 
Mount Allison University, Sackville, Canada.}
\numberwithin{equation}{subsection}
\newtheorem{observation}{Remark}[section]
\newtheorem{lemma}[observation]{Lemma}  
\newtheorem{theorem}[observation]{Theorem}
\newtheorem{proposition}[observation]{Proposition} 
\newtheorem{corollary}[observation]{Corollary} 
\newcommand{\<}{\langle}
\renewcommand{\>}{\rangle}
\newcommand{\p}{\pi}
\newcommand{\A}{\ensuremath{\mathscr A}\xspace}
\newcommand{\B}{\ensuremath{\mathscr B}\xspace}
\newcommand{\C}{\ensuremath{\mathscr C}\xspace}
\newcommand{\D}{\ensuremath{\mathscr D}\xspace}
\newcommand{\E}{\ensuremath{\mathscr E}\xspace}
\newcommand{\N}{\ensuremath{\mathbb N}\xspace}
\newcommand{\R}{\ensuremath{\mathbb R}\xspace}
\newcommand{\T}{\ensuremath{\mathbb T}\xspace}
\newcommand{\V}{\ensuremath{\mathscr V}\xspace}
\newcommand{\X}{\ensuremath{\mathscr X}\xspace}
\newcommand{\op}{{\mbox{\textnormal{\scriptsize op}}}}
\newcommand{\set}{\ensuremath{\textnormal{\textbf{set}}}\xspace}
\newcommand{\cmon}{\ensuremath{\textnormal{\textbf{cmon}}}\xspace}
\newcommand{\ab}{\ensuremath{\textnormal{\textbf{ab}}}\xspace}
\newcommand{\cochain}{\ensuremath{\textnormal{\textbf{cochain}}}\xspace}
\newcommand{\finCard}{\ensuremath{\textnormal{\textbf{finCard}}}\xspace}
\newcommand{\finOrd}{\ensuremath{\textnormal{\textbf{finOrd}}}\xspace}
\newcommand{\Cart}{\ensuremath{\textnormal{\textbf{Cart}}}
\xspace}
\newcommand{\Cat}{\ensuremath{\textnormal{\textbf{Cat}}}
\xspace}
\newcommand{\mf}{\ensuremath{\textnormal{\textbf{mf}}}\xspace}
\newcommand{\cart}{\ensuremath{\textnormal{\textbf{cart}}}
\xspace}
\newcommand{\Diff}{\ensuremath{\textnormal{\textbf{Diff}}}
\xspace}
\newcommand{\s}{\ensuremath{\mathop{;}}}
\newcommand{\Aut}{\ensuremath{\textnormal{Aut}}}
\newcommand{\ob}{\ensuremath{\mathop{\textnormal{ob}}}}
\newcommand{\alt}{\ensuremath{\textnormal{alt}}}
\newcommand{\syn}{\ensuremath{\textnormal{syn}}}
\newcommand{\slambda}{\ensuremath{\underline{\lambda}}}
\newcommand{\di}{\ensuremath{\partial}}
\newcommand*{\emptybox}{\leavevmode\hbox{}}
\newdimen\w@dth
\def\setw@dth#1#2{\setbox\z@\hbox{\scriptsize $#1$}\w@dth=\wd\z@
\setbox\@ne\hbox{\scriptsize $#2$}\ifnum\w@dth<\wd\@ne \w@dth=\wd\@ne \fi
\advance\w@dth by 1.2em}
\def\t@^#1_#2{\allowbreak\def\n@one{#1}\def\n@two{#2}\mathrel
{\setw@dth{#1}{#2}
\mathop{\hbox to \w@dth{\rightarrowfill}}\limits
\ifx\n@one\empty\else ^{\box\z@}\fi
\ifx\n@two\empty\else _{\box\@ne}\fi}}
\def\t@@^#1{\@ifnextchar_ {\t@^{#1}}{\t@^{#1}_{}}}
\def\t@left^#1_#2{\def\n@one{#1}\def\n@two{#2}\mathrel{\setw@dth{#1}{#2}
\mathop{\hbox to \w@dth{\leftarrowfill}}\limits
\ifx\n@one\empty\else ^{\box\z@}\fi
\ifx\n@two\empty\else _{\box\@ne}\fi}}
\def\t@@left^#1{\@ifnextchar_ {\t@left^{#1}}{\t@left^{#1}_{}}}
\def\two@^#1_#2{\def\n@one{#1}\def\n@two{#2}\mathrel{\setw@dth{#1}{#2}
\mathop{\vcenter{\hbox to \w@dth{\rightarrowfill}\kern-1.7ex
                 \hbox to \w@dth{\rightarrowfill}}%
       }\limits
\ifx\n@one\empty\else ^{\box\z@}\fi
\ifx\n@two\empty\else _{\box\@ne}\fi}}
\def\tw@@^#1{\@ifnextchar_ {\two@^{#1}}{\two@^{#1}_{}}}
\def\tofr@^#1_#2{\def\n@one{#1}\def\n@two{#2}\mathrel{\setw@dth{#1}{#2}
\mathop{\vcenter{\hbox to \w@dth{\rightarrowfill}\kern-1.7ex
                 \hbox to \w@dth{\leftarrowfill}}%
       }\limits
\ifx\n@one\empty\else ^{\box\z@}\fi
\ifx\n@two\empty\else _{\box\@ne}\fi}}
\def\t@fr@^#1{\@ifnextchar_ {\tofr@^{#1}}{\tofr@^{#1}_{}}}
\newdimen\W@dth
\def\setW@dth#1#2{\setbox\z@\hbox{$#1$}\W@dth=\wd\z@
\setbox\@ne\hbox{$#2$}\ifnum\W@dth<\wd\@ne \W@dth=\wd\@ne \fi
\advance\W@dth by 1.2em}
\def\T@^#1_#2{\allowbreak\def\N@one{#1}\def\N@two{#2}\mathrel
{\setW@dth{#1}{#2}
\mathop{\hbox to \W@dth{\rightarrowfill}}\limits
\ifx\N@one\empty\else ^{\box\z@}\fi
\ifx\N@two\empty\else _{\box\@ne}\fi}}
\def\T@@^#1{\@ifnextchar_ {\T@^{#1}}{\T@^{#1}_{}}}
\def\T@left^#1_#2{\def\N@one{#1}\def\N@two{#2}\mathrel{\setW@dth{#1}{#2}
\mathop{\hbox to \W@dth{\leftarrowfill}}\limits
\ifx\N@one\empty\else ^{\box\z@}\fi
\ifx\N@two\empty\else _{\box\@ne}\fi}}
\def\T@@left^#1{\@ifnextchar_ {\T@left^{#1}}{\T@left^{#1}_{}}}
\def\Tofr@^#1_#2{\def\N@one{#1}\def\N@two{#2}\mathrel{\setW@dth{#1}{#2}
\mathop{\vcenter{\hbox to \W@dth{\rightarrowfill}\kern-1.7ex
                 \hbox to \W@dth{\leftarrowfill}}%
       }\limits
\ifx\N@one\empty\else ^{\box\z@}\fi
\ifx\N@two\empty\else _{\box\@ne}\fi}}
\def\T@fr@^#1{\@ifnextchar_ {\Tofr@^{#1}}{\Tofr@^{#1}_{}}}
\def\Two@^#1_#2{\def\N@one{#1}\def\N@two{#2}\mathrel{\setW@dth{#1}{#2}
\mathop{\vcenter{\hbox to \W@dth{\rightarrowfill}\kern-1.7ex
                 \hbox to \W@dth{\rightarrowfill}}%
       }\limits
\ifx\N@one\empty\else ^{\box\z@}\fi
\ifx\N@two\empty\else _{\box\@ne}\fi}}
\def\Tw@@^#1{\@ifnextchar_ {\Two@^{#1}}{\Two@^{#1}_{}}}
\def\to{\@ifnextchar^ {\t@@}{\t@@^{}}}
\def\from{\@ifnextchar^ {\t@@left}{\t@@left^{}}}
\def\tofro{\@ifnextchar^ {\t@fr@}{\t@fr@^{}}}
\def\To{\@ifnextchar^ {\T@@}{\T@@^{}}}
\def\From{\@ifnextchar^ {\T@@left}{\T@@left^{}}}
\def\Two{\@ifnextchar^ {\Tw@@}{\Tw@@^{}}}
\def\Tofro{\@ifnextchar^ {\T@fr@}{\T@fr@^{}}}
\begin{document}
\maketitle

\begin{abstract}
Tangent categories provide an axiomatic framework for understanding various tangent bundles and differential operations that occur in differential geometry, algebraic geometry, abstract homotopy theory, and computer science.   Previous work has shown that one can formulate and prove a wide variety of definitions and results from differential geometry in an arbitrary tangent category, including generalizations of vector fields and their Lie bracket, vector bundles, and connections.  

In this paper we investigate differential and \textit{sector} forms in tangent categories.  We show that sector forms in any tangent category have a rich structure: they form a symmetric cosimplicial object.  This appears to be a new result in differential geometry, even for smooth manifolds.  In the category of smooth manifolds, the resulting complex of sector forms has a subcomplex isomorphic to the de Rham complex of differential forms, which may be identified with \textit{alternating} sector forms.  Further, the symmetric cosimplicial structure on sector forms arises naturally through a new equational presentation of symmetric cosimplicial objects, which we develop herein.

\end{abstract}

\tableofcontents

\section{Introduction}

Tangent categories \cite{rosicky, sman3} provide an axiomatization of one of the key structures in differential geometry: the tangent bundle.  Tangent categories are useful for a number of reasons.  First, constructions of objects like the tangent bundle appear in a variety of categories, some related to the category of smooth manifolds, others to categories in algebraic geometry, and others to categories in homotopy theory and computer science.  Thus, it is helpful to have a single axiomatization which can deal with all these examples simultaneously.  Secondly, a variety of definitions and constructions in differential geometry are closely linked to the tangent bundle.  For example, vector fields, the Lie bracket, connections, and differential forms can all be viewed as certain maps in the category of smooth manifolds which take as domain or codomain the tangent bundle (or bundles related to it).  Thus, one can hope to give definitions and prove results about these objects in an arbitrary tangent category.   

This paper is a contribution to the second aspect of this program; in particular, in this paper we are interested in determining how to define differential forms, their exterior derivative, and the resulting cochain complex of de Rham in an arbitrary tangent category.  However, to do so requires a close inspection of the nature of differential forms.  This inspection reveals an interesting structure, a simplicial object of \emph{sector forms}, of which de Rham cohomology can be seen as a simple consequence.  

There is a relatively straightforward analog of the notion of differential form in any tangent category.  Classical differential $n$-forms on a smooth manifold $M$ can be viewed as multilinear, alternating maps
	\[ T_nM \rightarrow \mathbb{R} \]
where $T_nM$ is the object of consisting of all ``$n$-tuples of tangent vectors at a common point on $M$''.  (That is, $T_nM$ is the fibre product of $n$ copies of the tangent bundle $TM \rightarrow M$ over $M$.)  These objects exist in any tangent category, and thus one can define (classical) differential forms in any tangent category as above, with $\mathbb{R}$ replaced by a suitable coefficient object.

However, a difficulty arises when attempting to define a direct analog of the exterior derivative of such forms in an arbitrary tangent category.  In the category of smooth manifolds, the exterior derivative of an $n$-form $\omega: T_nM \rightarrow \mathbb{R}$ is an $(n+1)$-form $\di \omega$, which can be defined locally on open subsets $U \cong \R^n$.  In the case where $M = \R^n$ one can define $\partial \omega$ as an alternating sum of certain maps $T_{n+1}M \rightarrow \R$ \cite[7.8]{natural}, each expressed in terms of the Jacobian derivative $T(\omega):T(T_nM) \rightarrow T\R \cong \R \times \R$ by pre-composing with a certain canonical map $\kappa:T_{n+1}M \rightarrow T(T_nM)$.  A similar definition applies in any Cartesian differential category \cite{deRhamCartDiffCats}.  However, in an arbitrary tangent category, the objects need not be manifolds, and the local definition cannot be mimicked globally for want of a suitable map $\kappa$ to mediate between the intended domain of $\partial \omega$ (namely $T_{n+1}M$) and the domain of $T(\omega)$ (namely $T(T_nM)$).

One solution to this problem can be found by considering how synthetic differential geometry (SDG) handles differential forms.  In SDG one finds categories which have \textit{representable} tangent structure; these are categories with an object $D$ for which there is a tangent functor $T$ defined by $TM := M^D$.  Various definitions and results have been transplanted from classical differential geometry to models of SDG; see, e.g., \cite{kock,lavendhomme,reyes}.  In a typical model of SDG, as in a tangent category, the objects need not be locally isomorphic to some $\mathbb{R}^n$.  Thus, for a general object in such categories, the exterior derivative also cannot be defined by mimicking the classical definition directly.  

In SDG, the solution to this problem is to look at a different type of map: instead of considering multilinear alternating maps from $T_nM \rightarrow R$, one instead considers multilinear alternating maps 
	\[ T^nM \to R, \] 
where $T^nM$ is the $n$th iterate of the tangent bundle of $M$.  Such maps were first considered in \cite{Kock:DiffFormsSDG,KRV} and referred to as \textit{singular forms} in \cite[Definition 4.1]{lavendhomme}; we shall use that name here to distinguish them from other notions of form we shall consider.  In contrast to the classical case, the Jacobian derivative $T(\omega)$ of a singular $n$-form $\omega$ does have the expected domain, namely $T^{n+1}M$.  The references above show how to define an exterior derivative for such forms; the definition involves an alternating sum of permutations of the Jacobian derivative.    Moreover, it has been shown that for a particular model of SDG which contains the category of smooth manifolds, if $M$ is a smooth manifold then singular forms are in bijective correspondence with classical differential forms, and their exterior derivatives agree \cite[IV, Proposition 3.7]{reyes}.  This shows that models of SDG have a notion of de Rham complex which generalizes the classical notion.  

Thus, for tangent categories, a natural point of investigation is to look at multilinear alternating maps from $T^nM$ to some coefficient object $E$.  We show in this paper that such maps indeed have an exterior derivative that generalizes the definition from SDG, and has the required properties (Proposition \ref{prop:complexOfSectorForms}).  Thus, this shows that tangent categories have a notion of de Rham cohomology (namely, the cohomology of the resulting complex) which generalizes the SDG notion, and hence also generalizes the classical notion.  

However, there is much more to say about maps from $T^nM$ to $E$ in a tangent category.  In particular, none of the results that need to be proved to show that such maps have an exterior derivative require that the maps from $T^nM$ be alternating; multilinearity suffices.  Thus, one is led to consider maps
	\[ \omega: T^nM \rightarrow E \]
(for a suitable coefficient object $E$) which are multilinear but not necessarily alternating.  Such maps do not appear in published accounts of SDG, but have appeared in differential geometry \cite{white, sectorFormsPhysicsArticle}.  They are known as \emph{sector forms} (for some basic examples of sector forms, see Section \ref{sec:results}).   The exterior derivative of singular forms works for sector forms, and so in addition to the complex of singular forms, tangent categories have complexes of sector forms (\ref{def:compl_sector_forms}).   

However, there is much more structure to these sector forms than a cochain complex.  We show that for each $n$, there are $n+1$ `derivative' or \textit{co-face} operations which take sector $n$-forms to sector $(n+1)$-forms (Theorem \ref{thm:sym_cosimpl_cmon_sector_forms}), there are $n-1$ \textit{symmetry} operations which take sector $n$-forms to sector $n$-forms, and there are $n-1$ \textit{co-degeneracy} operations which take sector $n$-forms to sector $(n-1)$-forms (Proposition \ref{thm:cod_symm_sector_forms})\footnote{Note that  while pre-composing with the projection or zero natural transformations gives an obvious way to define co-face and co-degeneracy operations, these are not the operations we are using: see section \ref{sec:results} for a basic overview of the definitions of these operations.}.  Taken together, these operations constitute the structure of an \textit{(augmented) symmetric cosimplicial object} \cite{Barr,Gra:Symm} of sector forms (Theorem \ref{thm:sym_cosimpl_cmon_sector_forms}); that is, there is a functor on the category of finite cardinals.  This is a remarkably rich structure, and has not previously appeared in either ordinary differential geometry or synthetic differential geometry\footnote{The first person to write on sector forms in standard differential geometry, J.E. White, describes the co-face and symmetry maps of sector forms, but does not describe the the co-degeneracy maps or any of the equations these various maps must satisfy in order to yield a symmetric cosimplicial object \cite[Definition 3.8, 3.12]{white}}.

Thus, we view the symmetric cosimplicial object of sector forms as the primary object of interest in relation to the various notions of differential forms considered above.  In particular, from this cosimplicial object one can obtain as a simple corollary the complex of sector forms and the complex of singular forms\footnote{In the context of synthetic differential geometry, there is a different way of obtaining the de Rham complex from simplicial data, using the idea of \textit{infinitesimal cochains} \cite[Section I.18]{kock} \cite{diffFormsasInfCochains}.  However, this approach is different from the approach via sector forms that we pursue here.}.  Moreover, by generalizing to maps which are not necessarily alternating, one also generalizes covariant tensors (multilinear maps with domain $T_n$) which have numerous uses throughout differential geometry \cite[Section 3.1]{white}.   In other words, sector forms generalize three important ideas in differential geometry: differential forms, covariant tensors, and singular forms.  Thus, it is important to understand the structure of sector forms, and this paper represents a substantial advance in the study of these objects in the general setting of tangent categories.  

\begin{center}
\begin{tabular}{ |l||c|c| }
\hline
& Alternating & Not alternating \\ 
\hhline{|=#=|=|}
Domain $T_n$ & differential form & covariant tensor \\
\hline
Domain $T^n$ & singular form & sector form \\
\hline
\end{tabular}
\end{center}

\vspace{0.25in}

It is also worth noting that this paper contains two other points of independent interest.  First, to establish the symmetric cosimplicial structure of sector forms, it becomes natural to give an alternative presentation of symmetric cosimplicial objects, and in particular to give an alternative presentation of the category of finite cardinals.  The standard presentation \cite{Gra:Symm} involves co-face maps, symmetry maps, and co-degeneracy maps.  However, for each $n$, the $n+1$ co-face maps from $n$ to $n+1$ can all be obtained by applying symmetries to a single co-face map, and thus one can show (Theorem \ref{thm:fund_cof_pres_fincard}) that the category of finite cardinals can be presented by symmetries, co-degeneracies, and a single co-face map for each $n$.  

The second point of interest relates to methodology in tangent categories in general.  The definition of the symmetry and co-degeneracy maps of sector forms involves various combinations of the \textit{lift} natural transformation $\ell: T \to TT$ and the \textit{canonical flip} transformation $c: TT \to TT$ (which are part of the definition of a tangent category).  To establish the various identities that are required of the symmetries and co-degeneracies, one then must perform various complicated calculations with these maps.  One way to handle the complexity of such calculations is to use string diagrams, as was done in previous work on tangent categories \cite{jacobiProof}.  Another way to handle the complexity is to use a recently discovered embedding theorem for tangent categories \cite{garnerEmbedding} (for more on this approach, see the discussion after \ref{tangent-category-examples}).    However, here we use a different approach.  Diagrams involving the maps $\ell$ and $c$ in a tangent category $\X$ can be viewed as the application of a certain functor from the category of finite cardinals and surjections (written as $\finCard_s$) to the category of endofunctors on $\X$ (Example \ref{exa:symm_deg_it_tang}).  Thus, to establish the commutativity of such a diagram of natural transformations, it suffices to establish the commutativity of a certain diagram in $\finCard_s$, and this is typically straightforward.  For examples of this proof technique, see Proposition \ref{thm:cod_symm_sector_forms} and Theorem \ref{thm:fund_deriv_sector_form}.  

The paper is laid out as follows.  In section \ref{sec:tanCats}, we review the definitions of tangent categories and \textit{differential objects}, which are the coefficient objects in which the forms will take their values.  Before going into the various details required of many of the proofs, in section \ref{sec:results} we give an overview of the key definitions and results of the paper, providing more detail than in the discussion above, and also providing some examples of sector forms.  In sections \ref{sec:monSemigroups}, \ref{sec:symCosimp}, and \ref{sec:presFundamentalCoface}, we study symmetric cosimplicial objects and related notions, emphasizing their relations to categories of finite cardinals and establishing equational presentations of some of these key categories.  Throughout these sections, we show how some of the structure of these categories is present in the category of endofunctors on a tangent category.  In section \ref{sec:cosimpOfSectorForms}, we look at sector forms, their \textit{fundamental derivative}, and how they have the structure of a symmetric cosimplicial object.  In section \ref{sec:complexes}, we obtain the complexes of sector forms and singular forms as simple consequences of the symmetric cosimplicial structure on sector forms.  In section \ref{sec:relFormsClSDG}, we study forms in the presence of representable tangent structure, and we show how our definitions of sector forms and singular forms in a tangent category relate to existing definitions in classical and synthetic differential geometry.  Finally, in section \ref{sec:futureWork}, we look at various ways to extend or add to the results we have presented.

\section{Tangent categories and differential objects}\label{sec:tanCats}

\subsection{Notation}\label{sec:notn}

Throughout this paper, composition in diagrammatic order is indicated with a semicolon, so that $f$, followed by $g$, is written as $f \s g$.  When $F$ and $G$ are functors, we will sometimes denote the composite $F \s G$ instead by $GF$, so that juxtaposition of functors denotes classical right-to-left composition.  Given an object $C$ of a category $\C$, we denote by $\Aut_\C(C)$ the group of automorphisms of $C$ in $\C$.  Rather than straying from convention by defining multiplication in $\Aut_\C(C)$ in terms of the diagrammatic composition order, we instead take the view that groups are certain one-object categories, and we define composition in $\Aut_\C(C)$ as in $\C$.

\subsection{Additive bundles}

If $M$ is an object in a category $\X$, an \textbf{additive bundle over $M$} consists of a map $q: E \to M$ such that (i) $q$ admits finite \textit{fibre powers}, i.e., for each $n \in \N$ there is a fibre product $E_n \rightarrow M$ of $n$ copies of $(E,q)$ over $M$ with projections $\pi_1,...,\pi_n:E_n \rightarrow E$, and (ii) $(E,q)$ is a commutative monoid in the slice category $\X/M$.  In particular, this means there is an addition operation, which will often be written as $\sigma: E_2 \to E$ and must satisfy the usual 
requirements of commutativity and associativity, and a unit for this addition, which will often be written as $\zeta:M \to E$.  A map between such bundles will, in general, just be a commutative square
$$\xymatrix{E \ar[rr]^e \ar[d]_q && E' \ar[d]^{q'} \\ B \ar[rr]_b && B'}$$
written $(e,b): q \to q'$.   If, in addition, such a map of bundles preserves the addition -- that is $e_2; \sigma' = \sigma; e$ and $b; \zeta' = \zeta; e$ -- then we shall say that $(e,b)$ is an {\bf additive bundle morphism}.

\begin{definition}\label{defnTangentCategory}
For a category $\X$, tangent structure $\T = (T,p,0,+,\ell,c)$ on $\X$ consists of the following data:
\begin{itemize} 
	\item (\textbf{tangent functor})  a functor $T: \X \to \X$ with a natural transformation $p: T \to I$ such that each 
	$p_M: T(M) \to M$ admits finite fibre powers that are preserved by each $T^n:\X \rightarrow \X$;
	\item (\textbf{additive bundle}) natural transformations $+:T_2 \to T$ (where $T_2$ is the second fibre power of $p$) and $0: I \to T$ making each $p_M: TM \to M$ an additive bundle;
	\item (\textbf{vertical lift}) a natural transformation $\ell:T \to T^2$ such that for each $M$
	$$(\ell_M,0_M): (p: TM \to M,+,0) \to (Tp: T^2M \to TM,T(+),T(0))$$ 
	is an additive bundle morphism;
        \item (\textbf{canonical flip}) a natural transformation $c:T^2 \to T^2$ such 
               that for each $M$
               $$(c_M,1): (Tp: T^2M \to TM, T(+),T(0)) \to (p_T: T^2M \to TM,+_T,0_T)$$ 
               is an additive bundle morphism;
        \item (\textbf{coherence of $\ell$ and $c$}) $c\s c = 1$ (so $c$ is an isomorphism), $\ell\s  c = \ell$, and the following diagrams commute:
\[
\xymatrix{T \ar[r]^{\ell} \ar[d]_{\ell} &T^2 \ar[d]^{T\ell} \\
                 T^2 \ar[r]_{\ell T} & T^3}
~~~
\xymatrix{T^3  \ar[r]^{T c} \ar[d]_{c T} & T^3 \ar[r]^{c T} & T^3 \ar[d]^{Tc} \\
                  T^3 \ar[r]_{Tc} & T^3 \ar[r]_{cT} & T^3}
~~~
\xymatrix{T^2 \ar[d]_{c} \ar[r]^{\ell T} & T^3 \ar[r]^{Tc} & T^3 \ar[d]^{cT} \\
                  T^2 \ar[rr]_{T\ell} & & T^3}
\]
	\item (\textbf{universality of vertical lift}) defining $v: T_2M \to T^2M$ by $v := \<\p_1\s \ell, \p_2\s 0_T\>T(+)$,
	the following diagram is a pullback that is preserved by each $T^n$: 
$$\xymatrix{T_2(M) \ar[d]_{\pi_1 \s p\:=\:\pi_2 \s p} \ar[rr]^{v} &  & T^2(M) \ar[d]^{T(p)} \\  M \ar[rr]_{0} &  & T(M)}$$	
\end{itemize}
A category with tangent structure, $(\X,\T)$, is a {\bf tangent category}.
\end{definition}

\begin{example} \label{tangent-category-examples}
Here are several important examples of tangent categories, the first four of which are drawn from \cite{sman3} and \cite{diffBundles}.
\begin{enumerate}[{\rm (i)}]
\item Finite dimensional smooth manifolds with the usual tangent bundle structure.  
\item Convenient manifolds with the kinematic tangent bundle \cite[Section 28]{convenient}.  
\item The infinitesimally and vertically linear objects in any model of synthetic differential geometry \cite{kock} form a tangent category:  if $D$ is the object of square-zero infinitesimals, then we take $TM := M^D$. 
\item The opposite of the category of finitely presented commutative rings (or more generally commutative rigs\footnote{That is, unital rings without additive inverses, also known as unital semirings.}) is another example of a category with representable 
tangent structure: here $D$ is the `rig of infinitesimals', $\N[\varepsilon] := \N[x]/(x^2=0)$ and again $TA := A^D$.  
\item A very different example of a tangent category arises from abstract homotopy theory, in particular in work on \textit{Abelian functor calculus} \cite{abFunCalc}.  In \cite{abFuncCalcDirDeriv}, the authors show that a certain operation in abelian functor calculus gives rise to a \textit{Cartesian differential category} \cite{cartDiff}.  As every Cartesian differential category is a tangent category \cite[Section 4.2]{sman3}, this example is also a tangent category; this insight was useful in providing a straightforward proof of the existence of certain higher-order chain rules for abelian functor calculus (see the discussion at the top of page 5 in \cite{abFuncCalcDirDeriv}).  
\item Other examples of tangent categories that arise as Cartesian differential categories include the models of the differential $\lambda$-calculus that appear in computer science (for example, see \cite{manzonetto} and \cite{cockettGallagher}).  
\end{enumerate}
\end{example}
More examples can be found in \cite{sman3} and \cite{diffBundles}.  In addition to these examples, recent work of Leung \cite{leungClassifyingTanStructures} and Garner \cite{garnerEmbedding} establishes certain equivalent formulations of tangent categories that provide new perspectives on the axioms.  Leung's work shows that tangent categories are closely related to categories of Weil algebras, while Garner's work builds on this result to show not only that tangent categories can be seen as certain types of enriched categories, but also that every tangent category can be embedded in a tangent category that is \textit{representable}, in the sense that the functor $T$ is representable (for more on representable tangent categories, see \cite[Section 5]{sman3} and also \ref{def:rep_tan} below).  

This last point allows one to work in an arbitrary tangent category as if $T$ was representable, allowing for calculations in a tangent category which closely resemble those in SDG.  One may ask, for example, whether this could simplify the proofs of some of the results in this paper.  However, we have not found that this was the case.  Certain of our initial attempts at proofs of the main results of this paper indeed used representable tangent categories, but the resulting calculations were no less lengthy than those recorded herein; indeed, by observing that the transformations $\ell$ and $c$ generate a model of a certain \textit{PROP}, in Mac Lane's sense \cite{MacL:CatAlg}, we have reduced many of these calculations to showing that certain diagrams of finite sets commute.  More importantly still, it was only by working with the relatively restrictive stuctures of tangent categories and associated PROPs that we discovered the main results of this paper, including the result that sector forms carry cosimplicial structure.

\subsection{Commutative monoids in a Cartesian tangent category}\label{sec:cmons_in_tngnt_cat}

The coefficient objects of our forms will in particular be commutative monoids, so it is useful to first make some remarks about commutative monoids in a tangent category.  

A tangent category $(\X,\T)$ is said to be \textbf{Cartesian} if $\X$ has finite products that are preserved by the tangent functor $T:\X \rightarrow \X$.  In this case we denote by $\cmon(\X)$ the category of commutative monoid objects in $\X$.  For each object $X$ of $\X$, the functor $\X(X,-):\X \rightarrow \set$ preserves limits and so sends each commutative monoid $E$ in $\X$ to a commutative monoid $\X(X,E)$ in $\set$.  When $X$ itself is a commutative monoid in $\X$, the hom-set $\cmon(\X)(X,E)$ is a submonoid of $\X(X,E)$.  Composition in $\cmon(\X)$ preserves this monoid structure in each variable separately, so we say that $\cmon(\X)$ is an \textit{additive category}, which, following previous papers on tangent categories, we take to mean a category enriched in commutative monoids.  Moreover, we have the following:

\begin{proposition}\label{prop:cmons_in_tngt_cat}
Let $(\X,\T)$ be a Cartesian tangent category.  Then the tangent endofunctor $T:\X \rightarrow \X$ lifts to an endofunctor $\cmon(T):\cmon(\X) \rightarrow \cmon(\X)$.  Moreover, the endofunctor $\cmon(T)$ is additive in the sense that it preserves the commutative monoid structure on the hom-sets of $\cmon(\X)$.  Furthermore, if $\phi:T^i \Rightarrow T^j$ is a natural transformation between iterates of $T$ ($i,j \in \N$), then for each commutative monoid $E$ in $\X$ the morphism $\phi_E:T^iE \rightarrow T^j E$ is a homomorphism of commutative monoids in $\X$.
\end{proposition}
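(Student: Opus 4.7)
The plan is to exploit the hypothesis that the tangent endofunctor $T:\X \to \X$ preserves finite products in a Cartesian tangent category, whence each iterate $T^n:\X \to \X$ is also product-preserving. Since any product-preserving functor between categories with finite products sends commutative monoid objects to commutative monoid objects and monoid homomorphisms to monoid homomorphisms, we immediately obtain the lift $\cmon(T):\cmon(\X) \to \cmon(\X)$. Concretely, for $(E,m,e)$ a commutative monoid in $\X$, the monoid structure on $TE$ is $T(m)$ precomposed with the canonical product-comparison isomorphism $TE \times TE \xrightarrow{\cong} T(E \times E)$, with unit $T(e)$, and the commutative monoid axioms for $TE$ follow from those of $E$ by applying $T$ to the relevant diagrams.

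For additivity of $\cmon(T)$, recall that the sum of $f,g:X \to E$ in $\cmon(\X)(X,E)$ is given by the pointwise formula $\langle f,g \rangle \s m_E$, and the zero element by $!_X \s e_E$. Applying $T$ to these expressions and invoking product preservation directly shows that $T(f+g) = T(f) + T(g)$ and that the zero morphism is preserved, so $\cmon(T)$ is additive. For the natural-transformation claim, let $\phi:T^i \Rightarrow T^j$ and let $(E,m,e)$ be a commutative monoid. I wish to verify the diagrams expressing that $\phi_E:T^iE \to T^jE$ is compatible with $m$ and with $e$. Naturality of $\phi$ at $m:E \times E \to E$ yields the square $\phi_{E \times E} \s T^j(m) = T^i(m) \s \phi_E$; transporting $\phi_{E \times E}$ across the canonical product-comparison isomorphisms for $T^i$ and $T^j$ identifies it with $\phi_E \times \phi_E$, so the square becomes precisely the axiom that $\phi_E$ preserves multiplication. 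Naturality at $e:1 \to E$, together with $T^i(1) = 1 = T^j(1)$ supplied by product preservation, then gives preservation of the unit.

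The main obstacle, and really the only point requiring care, is the identification of $\phi_{E \times E}$ with $\phi_E \times \phi_E$ under the product-comparison isomorphisms. This is a standard fact: any natural transformation between two product-preserving functors commutes with the product-comparison maps, which follows by applying naturality of $\phi$ to the two projections $E \times E \to E$ and using the universal property of the product. Once this bookkeeping is in place, all three parts of the proposition reduce to direct diagram chases, with the first two essentially amounting to the observation that $T$ is product-preserving and additive on morphisms pointwise.
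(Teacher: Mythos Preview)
Your proof is correct. The approach differs from the paper's, which is worth noting. The paper argues abstractly via the 2-functor $\cmon:\Cart \to \Cat$ (where $\Cart$ is the 2-category of categories with finite products, product-preserving functors, and arbitrary natural transformations), obtained from the equivalence $\Cart(\C,\D) \simeq \cmon(\D)$ with $\C$ the Lawvere theory of commutative monoids; applying this 2-functor to the 1-cell $T$ and the 2-cell $\phi$ yields the lift and the homomorphism property in one stroke. For additivity, the paper observes that $\cmon(\X)$ has finite products which are therefore biproducts, and that $\cmon(T)$ preserves these, invoking the standard equivalence between biproduct preservation and additivity. Your argument instead unpacks everything concretely: you verify the lift, the additivity, and the homomorphism property by direct diagram chases using product preservation and naturality at $m$, $e$, and the projections. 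Your route is more elementary and self-contained, requiring no 2-categorical machinery or the biproduct characterization of additive functors; the paper's route is shorter and makes clear that nothing is special to tangent categories---the result holds for any product-preserving endofunctor and any natural transformation between product-preserving functors. One minor quibble: you write $T^i(1) = 1 = T^j(1)$, but strictly one only has canonical isomorphisms; the argument still goes through since $\phi_1$ is then the unique map between terminal objects and hence compatible with those isomorphisms.
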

\begin{proof}
There is a 2-category $\Cart$ whose objects are categories with finite products, wherein the 1-cells are functors preserving finite products and the 2-cells are arbitrary natural transformations.  Letting $\C$ denote the Lawvere theory of commutative monoids, there is an equivalence of categories $\Cart(\C,\D) \simeq \cmon(\D)$ for every object $\D$ of $\Cart$.  But $\Cart(\C,-):\Cart \rightarrow \Cat$ is a 2-functor valued in the 2-category of categories, and it follows that the assignment $\D \mapsto \cmon(\D)$ underlies a 2-functor $\cmon:\Cart \rightarrow \Cat$.  We can apply this 2-functor to the 1-cell $T$ and to the 2-cell $\phi$, thus proving two of the above claims.  Lastly, $\cmon(\X)$ is an additive category with finite products, which are therefore finite biproducts, and since $T$ preserves finite products it follows that $\cmon(T)$ preserves finite biproducts and hence is additive.
\end{proof}

\subsection{Differential objects}\label{sec:diff_objs}
Before we define sector forms and singular forms, we need to consider the objects in which these forms will take their values.  These will be \textit{differential objects}, which are certain objects $E$ whose tangent bundle $TE$ is simply a product $E \times E$.  This is formulated more precisely as follows.  

\begin{definition}\label{def:diff_objs}
Let $(\X,\T)$ be a Cartesian tangent category.  A \textbf{differential object} in $(\X,\T)$ consists of a commutative monoid $(E,\mu,\eta)$ in $\X$ (equivalently, an additive bundle over $1$) together with a map $\lambda: E \to TE$ (known as the \textit{lift}) such that
\begin{itemize}
	\item $(\lambda,\eta)$ is an additive bundle morphism from $(E,!_E,\mu,\eta)$ to $(TE,p,+,0)$;
	\item $\lambda$ is a homomorphism of commutative monoids from $(E,\mu,\eta)$ to $(TE,T(\mu),T(\eta))$;
	\item the equation $\lambda\s T(\lambda) = \lambda\s \ell_E$ holds;
	\item the map 
		\[ E \times E \to^{\nu \::=\: \<\pi_1\s \lambda,\:\pi_2\s 0_E\>\s T(\mu)} TE \]
	is an isomorphism.
\end{itemize}
Say that $(E,\mu,\eta,\lambda)$ is a \textbf{subtractive} differential object if the commutative monoid $(E,\mu,\eta)$ is an abelian group.  In this case we denote the inverse operation by $-: E \to E$.  
\end{definition}

\begin{example} \label{diff-object-examples} Here are some important examples of differential objects from \cite{diffBundles}.
\begin{enumerate}[(i)]
	\item In the category of smooth manifolds, each Cartesian space $\mathbb{R}^n$ is a differential object, where $\lambda:\R^n \rightarrow T\R^n = \R^n \times \R^n$ sends $x$ to $(x,0)$, construed as the tangent vector $x$ at the point $0$.
	\item Similarly, in the category of convenient manifolds, each convenient vector space is a differential object.
	\item In the category of affine schemes $\mathbf{cRing}^{\mbox{\scriptsize{op}}}$, polynomial rings $\mathbb{Z}[x_1, x_2, \ldots, x_n]$ are differential objects.  
	\item Differential objects in a tangent category associated to a model of SDG are precisely the Euclidean $R$-modules \cite[1.1.4]{lavendhomme} (see \cite[Theorem 3.9]{diffBundles} for a proof of this).  
\end{enumerate}
All of the above examples are subtractive.  
\end{example}

\subsection{Remark. }\label{sec:princ_projn}

By definition, if $E$ is a differential object, then $TE \cong E \times E$.  Through the isomorphism $\nu$, one can show that the projection from the second component is $p_E: TE \to E$.  We will write $\hat{p}:TE \rightarrow E$ for the projection to the first component, and refer to it as the \textbf{principal projection}.  Differential objects can be alternatively axiomatized in terms of the principal projection $\hat{p}$.  For example, this was how differential objects were originally presented \cite[Definition 4.8]{sman3}.  It is a relatively straightforward exercise to show the equivalence of the two definitions \cite[Proposition 3.4]{diffBundles}.  

We will make use of both the lift $\lambda: E \to TE$ and the principal projection $\hat{p}: TE \to E$ when investigating differential forms with values in $E$.  In particular, the following results about these maps will be useful.

\begin{proposition}\label{prop:diffObjs}
If $E$ is a differential object with lift $\lambda$ and principal projection $\hat{p}$, then
\begin{enumerate}[(i)]
	\item $\hat{p}$ is a homomorphism of commutative monoids, where $TE$ has the commutative monoid structure as discussed in \ref{prop:cmons_in_tngt_cat}.
	\item $\lambda \s \hat{p} = 1_E$.
	\item $0_E \s \hat{p} = \;!_E \s \zeta$.
	\item $\ell_E\s  T(\hat{p})\s \hat{p} = \hat{p}$.
	\item $c_E \s T(\hat{p})\s \hat{p} = T(\hat{p})\s \hat{p}$.
	\item $\ell_E\s  T(\hat{p}) = \hat{p}\s  \lambda$.
	\item $T(\lambda)\s c_E \s T(\hat{p}) = \hat{p}\s  \lambda$.
\end{enumerate}
\end{proposition}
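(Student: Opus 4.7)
My plan is to exploit the defining isomorphism $\nu : E\times E \to TE$, whose projections are $\nu\s\hat{p}=\pi_1$ and $\nu\s p_E=\pi_2$, so that $\nu^{-1}=\langle\hat{p},p_E\rangle$.  Equalities of maps out of $TE$ can then be checked after pre-composition with the epi $\nu$, and equalities of maps out of $T^2E$ after pre-composition with $T\nu$ (which is an iso since $T$ preserves isomorphisms).  I will handle the claims in the non-linear order (i), (ii), (iii), (vi), (iv), (vii), (v); the crucial feature is that (vii) feeds into the proof of (v).  Throughout I will regard $T\mu:TE\times TE \to TE$ as the commutative monoid multiplication on $TE$ provided by Proposition \ref{prop:cmons_in_tngt_cat}, with unit $T\eta$.

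I would dispatch (i)--(iii) first.  For (i), note that $\nu$ is a homomorphism of commutative monoids: $\lambda$ is one by Definition \ref{def:diff_objs}, $0_E$ is one by Proposition \ref{prop:cmons_in_tngt_cat} (since $0$ is a natural transformation), and $T\mu$ is the multiplication; hence so is $\nu^{-1}$, and $\hat{p}=\nu^{-1}\s\pi_1$ is a monoid hom.  For (ii), I would verify $\langle 1_E,\,!_E\s\eta\rangle\s\nu = \lambda$ by expanding $\nu$, using $\eta\s\lambda = T\eta$ (preservation of unit by $\lambda$), and the right unit law of $(TE,T\mu,T\eta)$; post-composing with $\pi_1$ then yields $\lambda\s\hat{p}=1_E$.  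Claim (iii) follows symmetrically from $\langle!_E\s\eta,1_E\rangle\s\nu = 0_E$, verified via the left unit law of the same monoid.

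For (vi), I would prove $\nu\s\ell_E\s T\hat{p} = \pi_1\s\lambda = \nu\s\hat{p}\s\lambda$.  After expanding $\nu$ and commuting $\ell_E$ past $T\mu$ by naturality of $\ell$ at $\mu$, one uses that $\ell_{E\times E}$ acts componentwise as $\ell_E\times\ell_E$ (from $T$ preserving products and $\ell$ being natural); the two components are then simplified via the differential object axiom $\lambda\s T\lambda=\lambda\s\ell_E$ and the unit-preservation axiom $0_E\s\ell_E = 0_E\s T(0_E)$ of the additive bundle morphism $(\ell_E,0_E)$.  After composing with $T\hat{p}$, claims (ii) and (iii) collapse the expression to $\langle\pi_1\s\lambda,\,!_{E\times E}\s T\eta\rangle\s T\mu$, which reduces to $\pi_1\s\lambda$ by the right unit law.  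Claim (iv) is then immediate: $\ell_E\s T\hat{p}\s\hat{p} = \hat{p}\s\lambda\s\hat{p} = \hat{p}$ by (vi) and (ii).

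For (vii), the same scheme applies with $T\lambda\s c_E$ in place of $\ell_E$: one uses naturality of $c$ at $\mu$, the product formula $c_{E\times E}=c_E\times c_E$, and the unit-preservation axiom $T(0_E)\s c_E = 0_{TE}$ for the additive bundle morphism $(c_E,1_{TE})$; (ii), (iii), and the unit law again reduce the result to $\pi_1\s\lambda$.  Finally, for (v), I compose with the iso $T\nu$ and compute $T\nu\s c_E\s T\hat{p}\s\hat{p}$ by the same method: (vii) simplifies $T\pi_1\s T\lambda\s c_E\s T\hat{p}$ to $T\pi_1\s\hat{p}\s\lambda$, naturality of $0$ at $\hat{p}$ gives $0_{TE}\s T\hat{p} = \hat{p}\s 0_E$, and a further $\hat{p}$-composition together with (ii), (iii), and the monoid unit law reduces everything to $T\pi_1\s\hat{p}$, which coincides with $T\nu\s T\hat{p}\s\hat{p} = T(\nu\s\hat{p})\s\hat{p}$.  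The main obstacle is (v): unlike (iv), it does not sit over the retract $\lambda\s\hat{p}=1_E$, and the identity $\ell_E\s c_E=\ell_E$ yields only $\ell_E\s c_E\s T\hat{p}\s\hat{p}=\ell_E\s T\hat{p}\s\hat{p}$, which cannot be cancelled since $\ell_E$ is not a usable epi in this setting; going through (vii) provides the needed rewrite of $T\lambda\s c_E\s T\hat{p}$ as $\hat{p}\s\lambda$ and closes the $T\nu$-level computation.
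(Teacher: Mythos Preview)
Your argument is correct, but it proceeds in a direction essentially opposite to the paper's.  The paper takes (i)--(v) as established in \cite[Propositions 3.4 and 3.6, Definition 3.1]{diffBundles} and then \emph{derives} (vi) from (iv) and (vii) from (v), by post-composing with the two projections $\hat{p},p_E$ of $TE\cong E\times E$ and checking each coordinate.  You instead give self-contained proofs of (i)--(iii), prove (vi) and (vii) directly by pre-composing with the isomorphism $\nu$ (resp.\ $T\nu$ for (v)) and simplifying the two summands of $T\mu$, and then obtain (iv) from (vi)$+$(ii) and (v) from (vii).  Your route has the advantage of being independent of the cited reference and of never touching the second projection $p_E$; the paper's route is shorter once (iv) and (v) are granted, and makes the logical dependence (iv)$\Rightarrow$(vi), (v)$\Rightarrow$(vii) transparent.

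Two small points of care in your sketch.  First, in (ii) the component that must collapse to the unit of $TE$ is the one passing through $0_E$, so the relevant unit-preservation is $\eta\s 0_E = T\eta$ (from $0_E$ being a monoid homomorphism, Proposition~\ref{prop:cmons_in_tngt_cat}); the identity $\eta\s\lambda = T\eta$ you cite is what is used in (iii).  Second, in your direct proof of (vii) the first summand $\lambda\s T\lambda\s c_E\s T\hat{p}$ reduces to $\lambda$ via the differential-object axiom $\lambda\s T\lambda = \lambda\s\ell_E$ together with $\ell\s c = \ell$, which you use implicitly under ``the same scheme''; it is worth naming these explicitly, as they are the only tangent-category coherences needed beyond naturality.
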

\begin{proof}
The first five parts are established in \cite[Propositions 3.4 and 3.6, Definition 3.1]{diffBundles}.  For (vi), we regard $TE$ as a product $E \times E$ with projections $(\hat{p},p_E)$.  Then $\lambda$ is the morphism $\langle 1_E, 0 \rangle$ induced by the identity morphism on $E$ and the zero element $0$ of the commutative monoid $\X(E,E)$ (\ref{sec:cmons_in_tngnt_cat}).  Hence
$$\hat{p} \s \lambda \s \hat{p} = \hat{p} \s 1_E = \hat{p} = \ell_E \s T\hat{p} \s \hat{p}$$
by (iv).  Also, using the naturality of $p$, the equation $\ell \s c = \ell$, and the fact that $(\ell_E,0_E)$ and $(c_E,1_{TE})$ are bundle morphisms (\ref{defnTangentCategory}) we compute that
$$
\begin{array}{ccccccc}
\hat{p} \s \lambda \s p_E & = & \hat{p} \s 0 & = & 0 & = & p_E \s 0\\
                          & = & p_E \s 0_E \s \hat{p} & = & \ell_E \s Tp_E \s \hat{p} & = & \ell_E \s c_E \s p_{TE} \s \hat{p}\\
                          & = & \ell_E \s p_{TE} \s \hat{p} & = & \ell_E \s T\hat{p} \s p_E
\end{array}                        
$$
where each unadorned $0$ denotes the zero element of the relevant hom-set (\ref{sec:cmons_in_tngnt_cat}) while $0_E = \langle 0,1_E\rangle:E \rightarrow TE \cong E \times E$ denotes the zero section, i.e. the component of $0:I \Rightarrow T$ at $E$.  Hence $\hat{p} \s \lambda = \ell_E \s T\hat{p}$ as needed.

For (vii), we again use the fact that $TE$ is a product $E \times E$ with projections $(\hat{p},p_E)$.  Appending the first projection $\hat{p}$ to the equation in question, we compute that
$$T\lambda \s c_E \s T\hat{p} \s \hat{p} = T\lambda \s T\hat{p} \s \hat{p} = T(\lambda \s \hat{p}) \s \hat{p} = \hat{p} = \hat{p} \s \lambda \s \hat{p}$$
using (ii) and (v).  Appending the second projection $p_E$, we compute that
$$
\begin{array}{lllllllll}
T\lambda \s c_E \s T\hat{p} \s p_E & = & T\lambda \s c_E \s p_{TE} \s \hat{p} & = & T\lambda \s Tp_E \s \hat{p} & = & T(\lambda \s p_E) \s \hat{p} & = & T(0) \s \hat{p}\\
& = & 0 \s \hat{p} & = & 0 & = & \hat{p} \s 0 & = & \hat{p} \s \lambda \s p_E
\end{array}
$$
using the naturality of $p$, the additivity of $T$ (\ref{prop:cmons_in_tngt_cat}), the fact that $\lambda \s p_E = 0$, the fact that $\hat{p}$ is a homomorphism of commutative monoids, and the fact that $c_E \s p_{TE} = Tp_E$ since $(c,1_{TE})$ is a bundle morphism (\ref{defnTangentCategory}).
\end{proof}


\section{Overview of main results, with examples of sector forms}\label{sec:results}

To prove the main results of this paper we have used a variety of techniques and definitions.  However, we feel it is important to present many of the main results in a single place so as to be easily locatable and so as to feature them prominently.  In this section we also look at some examples of sector forms, as they are perhaps much less familiar to the general reader than differential forms.

Throughout this section, we work in a Cartesian tangent category $(\X,\T)$ with a fixed object $M$ and a fixed differential object $(E,\sigma,\zeta,\lambda)$.

We first define a number of natural transformations between powers of $T$ which will appear in the definitions of certain types of forms and their derivatives.  Much of the work of sections \ref{sec:monSemigroups} and \ref{sec:symCosimp} deals with how to interpret these natural transformations and handle them more efficiently.  
\begin{definition}
Define
\begin{equation}\ell^n_i \;:=\; T^{i - 1}\ell T^{n - i}\;\;:\;\;T^n \rightarrow T^{n+1}\;\;\;\;\;\;(n,i \in \N, 1 \leqslant i \leqslant n)\end{equation}
\begin{equation}c^n_i \;:=\; T^{i - 1} c T^{n - i - 1}\;\;:\;\;T^n \rightarrow T^n\;\;\;\;\;\;(n,i \in \N, 1 \leqslant i \leqslant n - 1)\end{equation}
\begin{equation}c^n_{(i)} \;:=\; c_{i-1}\s c_{i-2} \s...\s c_2 \s c_1\;\;:\;\;T^n \rightarrow T^n\;\;\;\;\;\;(n,i \in \N, 1 \leqslant i \leqslant n)\end{equation}
\begin{equation}a^n_{i} \;:=\; \ell_i^n \s c_{(i)}^{n+1}\;\;:\;\;T^{n} \rightarrow T^{n+1}\;\;\;\;\;\;(n,i \in \N, 1 \leqslant i \leqslant n)\end{equation}
\end{definition}

The main object of study of this paper is the following notion of form, originally due to White \cite{white}.  

\begin{definition}\label{def:sector_nform}
A \textbf{sector $n$-form} on $M$ with values in $E$ is a morphism $\omega: T^n M \rightarrow E$ such that for each $i \in \{1,...,n\}$, $\omega$ is \emph{linear in the $i$th variable}; that is, the following diagram commutes\footnote{White instead states the definition in terms of a criterion of fibrewise linearity.}:
\[
\bfig
	\square<500,300>[T^nM`E`T^{n+1}M`TE;\omega`a^n_i M`\lambda`T(\omega)]
\efig
\]
The set of sector $n$-forms on $M$ with values in $E$ will be denoted by $\Psi_n(M;E)$ (often abbreviated to $\Psi_n(M)$).  
\end{definition}

To help explore the similarities and differences between ordinary differential forms and sector forms, we will briefly look at sector 1- and 2-forms on $\mathbb{R}$ in the category of smooth manifolds.  

\begin{example}\label{ex1Form}
Let us first consider what sector 1-forms on $\mathbb{R}$ with values in $\mathbb{R}$ consist of.  By definition, such a form consists of a map $\omega: T\mathbb{R} \to \mathbb{R}$ that satisfies the single linearity equation
\[
\bfig
	\square<500,300>[T\mathbb{R}`\mathbb{R}`T^2\mathbb{R}`T\mathbb{R};\omega`\ell_{\mathbb{R}}`\lambda`T\omega]
\efig
\]

Recall that $T\mathbb{R}$ is simply $\mathbb{R} \times \mathbb{R}$, via the two projections $p_{\mathbb{R}}: T\mathbb{R} \to \mathbb{R}$ and $\hat{p}: T\mathbb{R} \to \mathbb{R}$.  Hence the commutativity of the above diagram is equivalent to its commutation when post-composed with $p_{\R}$ and $\hat{p}$, respectively.  But $\omega \s \lambda \s p_\R = \omega \s \;!_\R \s \zeta = \;!_{T\R} \s \zeta$ by \ref{def:diff_objs}, and $\ell_\R \s T\omega \s p_\R = \ell_\R \s p_{T\R} \s \omega = \ell_\R \s c_\R \s p_{T\R} \s \omega = \ell_\R \s Tp_\R \s \omega = p_\R \s 0_\R \s \omega$ by the axioms for tangent categories (\ref{defnTangentCategory}).  Hence $\omega \s \lambda \s p_\R = \ell_\R \s T\omega \s p_\R \Leftrightarrow \;!_{T\R} \s \zeta = p_\R \s 0_\R \s \omega \Leftrightarrow p_\R \s \;!_\R \s \zeta = p_\R \s 0_\R \s \omega \Leftrightarrow \;!_\R \s \zeta = 0_\R \s \omega$ since $p_\R$ is a retraction (of $0_\R$).   On the other hand, $\ell_\R \s T\omega \s \hat{p} = \ell_\R \s D\omega$ where $D \omega = T\omega \s \hat{p}:T^2\R \rightarrow \R$ is the directional derivative of $\omega$, so since $\lambda \s \hat{p} = 1$ by \ref{prop:diffObjs} we find that the above linearity equation holds if and only if the following equations hold:
$$\ell_\R \s D\omega = \omega\;\;\;\;\;\;\;\;!_\R \s \zeta = 0_\R \s \omega\;.$$
But the first equation entails the second, since if the first holds then $0_\R \s \omega = 0_\R \s \ell_\R \s D\omega = 0_\R \s \ell_\R \s T\omega \s \hat{p} = 0_\R \s 0_{T\R} \s T\omega \s \hat{p} = 0_\R \s \omega \s 0_\R \s \hat{p} = 
0_\R \s \omega \s \;!_\R \s \zeta = \;!_\R \s \zeta$
by \ref{defnTangentCategory} and \ref{prop:diffObjs}.

Hence the linearity equation is equivalent to the equation $\ell_\R \s D\omega = \omega$.  In order to reformulate this equation more concretely, let us write $\omega:\R \times \R \rightarrow \R$ as a function $\omega(x,v)$ of two variables $x,v$, so that we may write its first and second partial derivatives briefly as $\frac{\di\omega}{\di x}$ and $\frac{\di\omega}{\di v}$.  We can write $T^2\R = T(\R \times \R)$ as a product $T^2\R = (\R \times \R) \times (\R \times \R)$, whereupon $D \omega: T^2\mathbb{R}  \to \mathbb{R}$ is given by
	\[ \<x_0,v_1,v_2,d\> \mapsto \frac{\di \omega}{\di x}(x_0,v_1)\cdot v_2 + \frac{\di \omega}{\di v}(x_0,v_1) \cdot d \;,\]
and $\ell: T\mathbb{R} \to T^2\mathbb{R}$ is given by
	\[ \<x,v\> \mapsto \<x,0,0,v\>\;.\]
Hence $\ell \s D\omega$, evaluated at $\<x,v\>$, is
	\[ \frac{\di \omega}{\di x}(x,0)\cdot 0 + \frac{\di \omega}{\di v}(x,0)\cdot v  = \frac{\di \omega}{\di v}(x,0)\cdot v. \]
So the linearity equation says
	\[ \omega(x,v) = \frac{\di\omega}{\di v}(x,0)\cdot v. \]
Thus, if we set $f(x) = \frac{\di \omega}{\di v}(x,0)$, then 
	\[ \omega(x,v) = f(x) \cdot v. \]
It is easy to see that any map of this form is indeed a sector 1-form.  So, in this case, sector 1-forms are precisely the same as ordinary differential 1-forms (more generally, this is true for any smooth manifold).  
\end{example}

\begin{example}\label{ex2Form}
Despite \ref{ex1Form}, sector $n$-forms for $n \geq 2$ are in general quite different from ordinary differential $n$-forms, even for a simple smooth manifold such as $\mathbb{R}$.  For example, for $n=2$, a sector 2-form on $\mathbb{R}$ consists of a map $\omega: T^2\mathbb{R} \to \mathbb{R}$ such that
\[
\bfig
	\square<500,300>[T^2\mathbb{R}`\mathbb{R}`T^3\mathbb{R}`T\mathbb{R};\omega`\ell_{T\mathbb{R}}`\lambda`T\omega]
	\place(1000,150)[\mbox{and}]
	\square(1750,0)<500,300>[T^2\mathbb{R}`\mathbb{R}`T^3\mathbb{R}`T\mathbb{R};\omega`T\ell_{\mathbb{R}}\s c_{T\mathbb{R}}`\lambda`T\omega]
\efig
\]
Using similar reasoning to the previous example, it is straightforward to show that any map of the form
	\[ \omega(\<x,v_1,v_2,d\>) = f(x)\cdot v_1 \cdot v_2 + g(x) \cdot d \]
(where $f(x)$ and $g(x)$ are smooth functions from $\mathbb{R}$ to itself) is an example of a sector 2-form\footnote{One can also show that \emph{all} sector 2-forms on $\mathbb{R}$ are of this form.  A general description of sector $n$-forms on $\R^m$ was given by White \cite{white}.} on $\mathbb{R}$.  This is very different from the general description of ordinary differential 2-forms on $\R$: there is only one, namely the zero form.  
	
\end{example}

One of the key points of this paper is that the sector $n$-forms have a rich variety of operations that can be performed on them.  In particular, there are $n+1$ different \emph{derivative} or \emph{co-face} operations $\delta_i^n$ which take sector $n$-forms to sector $(n+1)$-forms:
	\[ \delta_i^n(\omega) := c^{n+1}_{(i)}M \s T\omega\s \hat{p}\;\;\; \mbox{ (Theorem \ref{thm:sym_cosimpl_cmon_sector_forms})}\]
there are $n-1$ different \emph{co-degeneracy} operations $\varepsilon_{i}^{n-1}$ which take sector $n$-forms to sector $(n-1)$-forms:
	\[ \varepsilon_i^{n-1}(\omega) := \ell_i^{n-1}M\s \omega\;\;\;  \mbox{ (Proposition \ref{thm:cod_symm_sector_forms})} \]
and there are $n-1$ different \emph{symmetry} operations $\sigma_i^n$ which take sector $n$-forms to sector $n$-forms:
	\[ \sigma_i^n(\omega) := c_i^nM\s \omega\;\;\; \mbox{ (Proposition \ref{thm:cod_symm_sector_forms}).} \]
	
Much of the work of this paper goes into proving the following result (Theorem \ref{thm:sym_cosimpl_cmon_sector_forms}):
$$
\begin{minipage}{4.5in}
\textit{The operations $\delta$, $\varepsilon$, $\sigma$ together endow the set of sector forms on $M$ with the structure of an (augmented) symmetric cosimplicial commutative monoid, i.e. a functor from the category of finite cardinals to the category of commutative monoids.}
\end{minipage}
$$
In particular, this means that for every function between finite cardinals $f: n \rightarrow m$ there is an associated monoid homomorphism $\Psi_f: \Psi_n(M) \to \Psi_m(M)$ (given as some composite of the above co-face, co-degeneracy, and symmetry operations), and this entire assignment is functorial.  

Moreover, if $E$ is subtractive, then this structure forms a symmetric cosimplicial abelian group.  In general, any cosimplicial abelian group has an associated cochain complex whose differential $\partial$ is given by taking an alternating sum of the co-face maps:
	\[ \partial(\omega) := \sum_{i=1}^{n+1} (-1)^{i-1} \delta_i^n(\omega) \]
And so there is a complex of sector forms (\ref{def:compl_sector_forms}), whose differential we call the \textit{exterior derivative}.  The fact that the sector forms constitute a cochain complex appears to be a new result in differential geometry.  

\begin{example}
Let us consider the first several groups of this complex for $M = E = \mathbb{R}$ in the category of smooth manifolds.  By definition, a sector $0$-form on $\mathbb{R}$ is simply a smooth map $\omega: \mathbb{R} \to \mathbb{R}$.   By Example \ref{ex1Form}, a sector 1-form on $\mathbb{R}$ is the same as a differential 1-form on $\mathbb{R}$; that is, a map $T\mathbb{R} \to \mathbb{R}$ of the form $\<x,v\> \mapsto f(x) \cdot v$.  By Example \ref{ex2Form}, a sector 2-form on $\mathbb{R}$ is a map $T\mathbb{R} \to \mathbb{R}$ of the form
	\[ \<x,v_1,v_2,d\> \mapsto g(x) \cdot v_1 \cdot v_2 + h(x) \cdot d. \]
	
The exterior derivative of a 0-form $\omega$ is the same as for ordinary differential forms:
	\[ \partial (\omega)(\<x,v\>) = \omega'(x) \cdot v. \]
	
For a sector 1-form $\omega: \<x,v\> \mapsto f(x) \cdot v$ we have $T\omega \s \hat{p} = D\omega$, the directional derivative of $\omega$, which in this case is
	\[ D\omega: \<x,v_1,v_2,d\> \mapsto f'(x) \cdot v_1 \cdot v_2 + f(x)\cdot d \;.\]
So then the exterior derivative of $\omega$ is $\di(\omega) = D\omega \:-\: c_\R \s D\omega$ and hence is given by
	\[ \di(\omega)(x,v_1,v_2,d) = (f'(x) \cdot v_1 \cdot v_2 + f(x)\cdot d) - (f'(x) \cdot v_2 \cdot v_1 + f(x)\cdot d) = 0, \]
since the effect of $c_\R$ is simply to switch the middle two co-ordinates ($v_1$ and $v_2$).  So, in this case, every sector 1-form has exterior derivative $0$.  (Note that this is also automatic since every 1-form is the exterior derivative of a 0-form and the sector forms constitute a complex.  However, it is useful to see how this works explicitly).  

However, the exterior derivative of a sector 2-form is not typically zero.  For a sector 2-form 
	\[ \omega: \<x,v_1,v_2,d\> \mapsto g(x) \cdot v_1 \cdot v_2 + h(x) \cdot d, \]
its directional derivative takes as input an 8-tuple $\<x,v_1,v_2,d_1,v_3,d_2,d_3,t\>$, and maps it to
	\[ g'(x)\cdot v_1 \cdot v_2 \cdot v_3 + h'(x) \cdot d_1 \cdot v_3 + g(x) \cdot v_2 \cdot d_2 + g(x) \cdot v_1 \cdot d_3 + h(x) \cdot t. \]
Now, the effect of $c_{T\mathbb{R}}$ is 
	\[ \<x,v_1,v_2,d_1,v_3,d_2,d_3,t\> \mapsto \<x,v_1,v_3,d_2,v_2,d_1,d_3,t\> \]
and the effect of $Tc_\mathbb{R}$ is
	\[ \<x,v_1,v_2,d_1,v_3,d_2,d_3,t\> \mapsto \<x,v_2,v_1,d_1,v_3,d_3,d_2,t\> \]
Hence since
	\[ \partial (\omega) \;=\; D\omega \:-\: c_{T\R} \s D\omega \:+\: Tc_\R \s c_{T\R} \s D\omega, \]
$\partial (\omega)$ maps  $\<x,v_1,v_2,d_1,v_3,d_2,d_3,t\>$ to
	\[ g'(x) \cdot v_1 \cdot v_2 \cdot v_3 + h'(x)\cdot v_3 \cdot d_1 + (2g(x) - h'(x))\cdot v_2 \cdot d_2 + h'(x)\cdot v_1 \cdot d_3 + h(x) \cdot t. \]
Note that if this is identically zero, then by setting all variables except $t$ to 0, we get $h(x) = 0$, and then by setting all variables but $v_2$ and $d_2$ to 0, we also get $g(x) = 0$.  Thus, a sector 2-form on $\R$ has exterior derivative $0$ if and only it is identically zero..

Taken together, these results tell us the first three sector cohomology groups of $\mathbb{R}$.  The 0th cohomology is the same as ordinary de Rham cohomology (namely, $\mathbb{R}$, since the constant functions are those with derivative $0$).  Similarly, the 1st cohomology is the same (namely, $0$), since every 1-form (sector or differential) is the image of a $0$-form.  Finally, the second sector form cohomology group is also zero, but for a different reason than for de Rham cohomology.  In de Rham cohomology, it is zero since there are no non-trivial 2-forms on $\mathbb{R}$.  For sector forms, it is zero since by the above, the only closed sector 2-form is the zero form.  

It is an open question whether sector form cohomology is always the same as de Rham cohomology; the basic examples given above, however, at least show that the complexes they form are quite different.  We hope to explore the relationship between sector form and de Rham cohomology in a future paper.  

\end{example}

It is also important to note that the individual `derivative' operations $\delta_i^n$ on sector forms appear to have geometric significance: for more on this, see \cite[Chapter 4]{white}.

Returning to our general setting, we shall consider the following further property possessed by some sector forms:

\begin{definition}
If $E$ is subtractive, a \textbf{singular $n$-form} on $M$ with values in $E$ is a sector $n$-form $\omega: T^n M \rightarrow E$ such that $\omega$ is \emph{alternating}; that is, for each $1 \leqslant i \leqslant n-1$,
\[
\bfig
	\square<500,300>[T^nM`E`T^{n}M`E;\omega`c^n_i M`-`\omega]
\efig
\]
\end{definition}
The exterior derivative operation $\partial$ defined above restricts to such singular forms, and so there is also a complex of singular forms (Proposition \ref{prop:complexOfSectorForms}). 

\begin{example}
Let us consider which sector 2-forms on $\mathbb{R}$ are alternating.  By the above, a sector 2-form on $\mathbb{R}$ takes the form
	\[ \omega: \<x,v_1,v_2,d\> \mapsto f(x)\cdot v_1 \cdot v_2 + g(x) \cdot d. \]
For 2-forms, the condition of being alternating amounts to a single equation
	\[ c_\R \s \omega = -\omega. \]
Since $c_\R$ swaps $v_1$ and $v_2$, $\omega$ is alternating if and only if for all $x,v_1,v_2,d$,
	\[ f(x) \cdot v_2 \cdot v_1 + g(x) \cdot d = -(f(x)\cdot v_1 \cdot v_2 + g(x) \cdot d). \]
But this implies that $f(x) = g(x) = 0$, so $\omega$ is constantly zero.  Hence the only singular 2-form on $\mathbb{R}$ is the zero form.
\end{example}

In fact, we can show much more generally that the complex of singular forms on any smooth manifold (with values in $\mathbb{R}$) is isomorphic to its de Rham complex.  We shall prove this in \ref{thm:classical_derham} after first comparing the above singular forms to those studied in synthetic differential geometry \cite{Kock:DiffFormsSDG,KRV,kock}.  Indeed, we shall show that in the tangent category determined by a model of SDG, the above complex of singular forms is isomorphic to its SDG counterpart (\ref{thm:singf_vs_sdgsingf}), and in certain models of SDG the latter complex is known to be isomorphic to the ordinary de Rham complex of differential forms when $M$ is a smooth manifold \cite[IV, Proposition 3.7]{reyes}.


\section{Symmetric monoids, semigroups, and finite sets}\label{sec:monSemigroups}

In working towards the symmetric cosimplicial structure on sector forms, we will make use of an algebraic structure carried by the tangent endofunctor $T$, namely the structure of a \textit{symmetric semigroup} (\ref{def:symm_semi_mon}).  Many of the results and ideas in this section are due to previous authors \cite{Burr:HdWordPr,Barr,Gra:Symm,Laf:Eq2d,Laf:Bool}, but the applications to tangent categories are new.    

\subsection{Monoids and semigroups} Given a strict monoidal category $\V$, let us denote the unit object of $\V$ by $I$ and write the monoidal product in $\V$ as juxtaposition.  By definition, a \textbf{semigroup} $(S,m)$ in $\V$ is an object $S$ of $\V$ equipped with a morphism $m:SS \rightarrow S$ (called a \textit{multiplication}) that satisfies the following \textbf{associative law}
\begin{equation}\label{eq:assoc}Sm \s m = mS \s m\;.\end{equation}
Explicitly, this means that the composites $SSS \xrightarrow{Sm} SS \xrightarrow{m} S$ and $SSS \xrightarrow{mS} SS \xrightarrow{m} S$ are equal.  A \textbf{monoid} $(S,m,e)$ is a semigroup $(S,m)$ equipped with an additional morphism $e:I \rightarrow S$ (called the \textit{unit}) that satisfies the following \textbf{unit laws}
\begin{equation}\label{eq:unit}eS \s m = 1_S\;\;\;\;\;\;Se \s m = 1_S\;,\end{equation}
noting that $IS = S = SI$ since $\V$ is strict monoidal.

If the given strict monoidal category $\V$ underlies a \textit{symmetric} monoidal category, then we say that a semigroup $(S,m)$ or monoid $(S,m,e)$ in $\V$ is \textbf{commutative} if it satisfies the following \textbf{commutative law}
\begin{equation}\label{eq:comm_law}
s \s m = m
\end{equation}
where $s:SS \rightarrow SS$ is the symmetry isomorphism carried by $\V$.

\subsection{Example: The tangent functor as (co)semigroup}\label{exa:tng_func_sgrp}
Given a tangent category $(\X,\T)$, the tangent functor $T:\X \rightarrow \X$ carries the structure of a semigroup in the monoidal category $[\X,\X]^{\op}$, i.e. the \textit{opposite} of the category $[\X,\X]$ of endofunctors on $\X$.  Indeed the vertical lift $\ell:T \rightarrow TT$ serves as an associative multiplication in $[\X,\X]^{\op}$.  (Note that in general no unit exists to make this semigroup into a monoid.)

\subsection{Monoidal categories of finite cardinals}\label{sec:cats_fin_cards}
Writing $\set$ for the category of sets, let us denote by $\finCard$ the full subcategory of $\set$ whose objects are the finite cardinals, which we identify with their corresponding ordinals and also with the natural numbers $n \in \N$.  The sum $n + m$ of a pair of finite cardinals carries the structure of a coproduct in $\finCard$, where the associated mappings $n \rightarrow n + m$ and $m \rightarrow n + m$ are order preserving and injective and send $n$ and $m$, respectively, onto initial and final segments of the ordinal $n + m$.  In general, if a category $\C$ is equipped with designated binary coproducts and a designated initial object, then $\C$ carries an associated structure of symmetric monoidal category.  In particular, $\finCard$ is therefore symmetric monoidal, with monoidal product $+$ and unit object $0$.  Further, $(\finCard,+,0)$ is a \textit{strict} monoidal category, but note that although $n + m = m + n$ as objects of $\finCard$, the symmetry isomorphism $\sigma_{nm}:n + m \rightarrow m + n$ is not the identity map.

We shall consider several non-full subcategories of $\finCard$ with the same objects as $\finCard$ itself:
\begin{enumerate}[{\rm (i)}]
\item $\finCard_s$, whose morphisms are \textit{surjections};
\item $\finCard_b$, whose morphisms are \textit{bijections}, all of which are automorphisms;
\item $\finOrd$, whose morphisms are \textit{order preserving maps};
\item $\finOrd_s$, whose morphisms are \textit{order preserving surjections}.
\end{enumerate}
Each of these subcategories is closed under the monoidal product in $\finCard$ and hence inherits the structure of a strict monoidal category.  Note that $\finCard_s$ and $\finCard_b$ contain the symmetries $\sigma_{mn}$ and so are symmetric strict monoidal categories, whereas the other subcategories are merely strict monoidal categories.

\subsection{Universal monoids and semigroups}\label{sec:univ_mon_sgrps}

The cardinal $1$ carries the structure of a commutative monoid $(1,\mu,\eta)$ in the symmetric monoidal category $\finCard$, where the associated multiplication $\mu$ and unit $\eta$ are the unique maps
$$\mu:1 + 1 \rightarrow 1\;\;\;\;\;\;\;\;\eta:0 \rightarrow 1\;.$$
Since these maps are order preserving, $(1,\mu,\eta)$ is also a monoid in $\finOrd$.  These monoids and their underlying semigroups have the following universal properties:
\begin{theorem}\label{thm:univ_mon_and_comm_mon}
Let $\V$ be a strict monoidal category.  
\begin{enumerate}[{\rm (i)}]
\item Given a monoid $(S,m,e)$ in $\V$, there is a unique strict monoidal functor $S^\sharp:\finOrd \rightarrow \V$ with $S^\sharp(1) = S$, $S^\sharp(\mu) = m$, and $S^\sharp(\eta) = e$.
\item Given a semigroup $(S,m)$ in $\V$, there is a unique strict monoidal functor $S^\sharp:\finOrd_s \rightarrow \V$ with $S^\sharp(1) = S$ and $S^\sharp(\mu) = m$.
\item \textnormal{(Burroni \cite[2.2]{Burr:HdWordPr}, Grandis \cite[4.1]{Gra:Symm})} If $\V$ is symmetric, then given a commutative monoid $(S,m,e)$ in $\V$, there is a unique symmetric strict monoidal functor $S^\sharp:\finCard \rightarrow \V$ with $S^\sharp(1) = S$, $S^\sharp(\mu) = m$, and $S^\sharp(\eta) = e$.
\item \textnormal{(Lafont \cite[2.3, p. 266]{Laf:Bool})} If $\V$ is symmetric, then given a commutative semigroup $(S,m)$ in $\V$, there is a unique symmetric strict monoidal functor $S^\sharp:\finCard_s \rightarrow \V$ with $S^\sharp(1) = S$ and $S^\sharp(\mu) = m$.
\end{enumerate}
\end{theorem}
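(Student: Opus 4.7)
The plan is to treat all four parts via a single template. For each prescribed algebraic structure $(S,\ldots)$ in $\V$, strict monoidality forces $S^\sharp$ on objects, namely $S^\sharp(n) := SS \cdots S$ ($n$ factors, with $S^\sharp(0) := I$). Uniqueness then follows at once, since the assignments $\mu \mapsto m$ and (where applicable) $\eta \mapsto e$, together with the designated values on symmetry isomorphisms in the two symmetric cases, exhaust the generating morphisms of the relevant category and hence determine $S^\sharp$ on every morphism via strict monoidality.

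For existence, I would invoke a presentation by generators and relations of each of the four categories in the appropriate doctrine. The category $\finOrd$ is the free strict monoidal category on an internal monoid, presented by $\mu : 1 + 1 \to 1$ and $\eta : 0 \to 1$ subject only to the associative law \eqref{eq:assoc} and the unit laws \eqref{eq:unit}; this is a well-known fact about the augmented simplex category. Similarly, $\finOrd_s$ is presented by $\mu$ alone subject to associativity. In the symmetric setting, Burroni \cite{Burr:HdWordPr} and Grandis \cite{Gra:Symm} establish that $\finCard$ is the free symmetric strict monoidal category on a commutative monoid (so $\mu$ and $\eta$ subject to associativity, unit, and commutativity), and Lafont \cite{Laf:Bool} proves the analogous statement for $\finCard_s$ and commutative semigroups. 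In each case, the relations of the presentation correspond exactly to the algebraic axioms imposed on the structure in $\V$, so the assignments on generators descend through the quotient to yield the required structure-preserving functor $S^\sharp$.

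The main obstacle, were one to develop these presentations from scratch, would be completeness: any two parallel composites of generators in the combinatorial category must already be equated by the listed axioms. For $\finOrd$ and $\finOrd_s$ this rests on the classical epi-mono factorization of order-preserving maps together with a unique normal form for order-preserving surjections as composites of the generator $\mu$. In the symmetric cases one further exploits the fact that every map factors as a symmetry followed by an order-preserving map, reducing the problem to the order-preserving case modulo naturality of the symmetries. Since the cited references develop these normal forms in full, the proof will consist of defining $S^\sharp$ on objects and on the listed generators in each case and appealing to the presentation for well-definedness.
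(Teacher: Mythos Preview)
Your proposal is correct and, for parts (i) and (ii), matches the paper exactly: both simply cite the standard references. For (iii) and (iv) your argument is also correct but differs slightly from the paper's route. You appeal directly to the characterization of $\finCard$ (resp.\ $\finCard_s$) as the free \emph{symmetric} strict monoidal category on a commutative monoid (resp.\ semigroup). The paper instead first proves a more general statement (Theorem~\ref{thm:univ_symm_mon_sgrp}): $\finCard$ and $\finCard_s$ are the free \emph{strict monoidal} categories on a \emph{symmetric} monoid and symmetric semigroup, respectively, where ``symmetric'' now refers to an internal symmetry $s:SS\to SS$ satisfying axioms that make sense even when $\V$ is not symmetric. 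Parts (iii) and (iv) then follow by observing (Remark~\ref{rem:comm_sgrp_yields_symm_sgrp}) that a commutative monoid or semigroup in a symmetric $\V$ is automatically a symmetric one, with $s$ taken to be the ambient symmetry, and that the resulting strict monoidal functor is then automatically symmetric.

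Your direct approach is cleaner for this theorem in isolation. The paper's detour through symmetric semigroups pays off later: the tangent endofunctor $T$ is a symmetric semigroup in $[\X,\X]^\op$, which is \emph{not} symmetric monoidal, so the non-symmetric version of the universal property is what is actually needed for the applications.
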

\begin{proof}
(i) and (ii) are well-known, e.g. see \cite[VII.5, Proposition 1 and Exercise 3]{MacL}.  We will defer the proofs of (iii) and (iv) until \ref{rem:results_in_symm_mon_setting_follow} below, where we will see that they follow from more general results on the basis of the cited work of Burroni, Grandis, and Lafont.
\end{proof}

Hence, up to a bijection, monoids (resp. semigroups) in strict monoidal categories are the same as strict monoidal functors on $\finOrd$ (resp. $\finOrd_s$), and analogous statements hold for the commutative variants of these notions.  In the terminology of \cite{MacL:CatAlg}, $\finOrd$ is therefore the \textit{PRO} that defines the notion of monoid, and $\finCard$ is the \textit{PROP} that defines the notion of commutative monoid.

\subsection{Symmetric monoids and semigroups}

One of the ramifications of \ref{thm:univ_mon_and_comm_mon}(iii) is that it provides a way to generalize the notion of commutative monoid to the context of \textit{non-symmetric} monoidal categories.  Indeed, work of Burroni \cite[2.2]{Burr:HdWordPr} and of Grandis \cite[\S 2]{Gra:Symm} shows that a strict monoidal functor $\finCard \rightarrow \V$ valued in a mere strict monoidal category $\V$ is equivalently given by a monoid in $\V$ equipped with a compatible \textit{symmetry} isomorphism, per the following definition:

\begin{definition}\label{def:symm_semi_mon}
Let $\V$ be a strict monoidal category.
\begin{enumerate}[{\rm (i)}]
\item A \textbf{symmetry} on an object $S$ of $\V$ is a morphism $s:SS \rightarrow SS$ satisfying the following equations:
\begin{equation}\label{eq:symm}s \s s = 1_{SS}\;\;\;\;\;\;\;\;Ss \s sS \s Ss = sS \s Ss \s sS\;.\end{equation}
\item A \textbf{symmetric semigroup} $(S,m,s)$ in $\V$ consists of a semigroup $(S,m)$ in $\V$ together with a symmetry $s$ on the object $S$ such that the following equation is satisfied
\begin{equation}\label{eq:compat_symm_mult}Sm \s s = sS \s Ss \s mS\end{equation}
and the commutativity law \eqref{eq:comm_law} is also satisfied.
\item (Grandis \cite[\S 2]{Gra:Symm}) A \textbf{symmetric monoid} $(S,m,e,s)$ in $\V$ consists of a monoid $(S,m,e)$ in $\V$ with a symmetry $s$ on $S$ such that $(S,m,s)$ is a symmetric semigroup and the following equation is satisfied:
\begin{equation}\label{eq:comp_symm_unit}eS \s s = Se\;.\end{equation}
\end{enumerate}
One can generalize each of the above notions to the setting of an arbitrary monoidal category $\V$ by inserting associativity and unit isomorphisms as needed.
\end{definition}

\begin{remark}\label{rem:equivalent_defs_symm_sgrp_mnd}
It is readily verified that one obtains an equivalent definition of symmetric semigroup by replacing the equation \eqref{eq:compat_symm_mult} with the equation
\begin{equation}\label{eq:alt_compat_symm_mult}mS \s s = Ss \s sS \s Sm\end{equation}
which appears in \cite[2.2]{Burr:HdWordPr}, \cite[p. 265]{Laf:Bool}, and \cite[3.3]{Laf:Eq2d}.  Similarly, we obtain an equivalent definition of symmetric monoid by replacing the equation \eqref{eq:comp_symm_unit} with
\begin{equation}\label{eq:at_comp_symm_unit}Se \s s = eS\end{equation}
which appears in \cite[2.2]{Burr:HdWordPr} and \cite[3.3]{Laf:Eq2d}.
\end{remark}

\begin{remark}\label{rem:comm_sgrp_yields_symm_sgrp}
Any commutative semigroup $(S,m)$ (resp. commutative monoid $(S,m,e)$) in a symmetric strict monoidal category $\V$ carries the structure of a symmetric semigroup $(S,m,s)$ (resp. symmetric monoid $(S,m,e,s)$) in $\V$ when we take $s$ to be the relevant component of the symmetry isomorphism carried by $\V$.  In particular, the monoid $(1,\mu,\eta)$ in $\finCard$ carries the structure of a symmetric monoid $(1,\mu,\eta,\sigma)$ in $\finCard$, and its underlying symmetric semigroup $(1,\mu,\sigma)$ is also a symmetric semigroup in $\finCard_s$.
\end{remark}

\begin{theorem}\label{thm:univ_symm_mon_sgrp}
Let $\V$ be a strict monoidal category.  
\begin{enumerate}[{\rm (i)}] 
\item \textnormal{(Burroni \cite[2.2]{Burr:HdWordPr}, Grandis \cite[4.1]{Gra:Symm})} Given a symmetric monoid $(S,m,e,s)$ in $\V$, there is a unique strict monoidal functor $S^\sharp:\finCard \rightarrow \V$ with $S^\sharp(1) = S$, $S^\sharp(\mu) = m$, $S^\sharp(\eta) = e$, and $S^\sharp(\sigma) = s$.
\item \textnormal{(Lafont \cite[2.3, p. 266]{Laf:Bool})} Given a symmetric semigroup $(S,m,s)$ in $\V$, there is a unique strict monoidal functor $S^\sharp:\finCard_s \rightarrow \V$ with $S^\sharp(1) = S$, $S^\sharp(\mu) = m$, and $S^\sharp(\sigma) = s$.
\item \textnormal{(Lafont \cite[3.2]{Laf:Eq2d})} The object $1$ carries a symmetry $\sigma$ in $\finCard_b$ that is universal in the sense that if $S$ is an object of  $\V$ and $s$ is a symmetry on $S$, then there is a unique strict monoidal functor $S^\sharp:\finCard_b \rightarrow \V$ with $S^\sharp(1) = S$ and $S^\sharp(\sigma) = s$.
\end{enumerate}
\end{theorem}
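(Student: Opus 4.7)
The plan is to establish parts (iii), (ii), and (i) in that order, exploiting the subcategory chain $\finCard_b \subseteq \finCard_s \subseteq \finCard$. In each case, the goal is to identify the target category with the strict monoidal category freely generated by the prescribed structure on the object $1$; concretely, this amounts to exhibiting a presentation by generators and relations in which the relations match exactly the equations of \ref{def:symm_semi_mon}. Given such a presentation, both the existence and uniqueness of $S^\sharp$ follow automatically: its values are forced on the object $1$ and on the generating morphisms $\sigma$, $\mu$, $\eta$, and hence are determined on all objects (via the strict monoidal product) and on all morphisms (via composition of monoidal products of generators).

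For part (iii), I would observe that $\finCard_b$ is the disjoint union over $n \in \N$ of the symmetric groups $\Aut_{\finCard_b}(n) \cong S_n$. The object $1$ generates all objects via the monoidal product, and every bijection decomposes as a composite of adjacent transpositions of the form $1^{i-1} + \sigma + 1^{n-i-1}$. The classical Coxeter presentation of $S_n$ by such transpositions has as its defining relations precisely the involutivity $\sigma \s \sigma = 1$ and the braid relation, which together constitute \eqref{eq:symm}, along with commutations of non-adjacent transpositions that follow automatically from the monoidal interchange law. This yields (iii).

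For part (ii), I would build $\finCard_s$ on top of $\finCard_b$ by adjoining the generator $\mu: 2 \to 1$ subject to \eqref{eq:assoc}, \eqref{eq:comm_law}, and \eqref{eq:compat_symm_mult}. Every surjection in $\finCard_s$ admits a normal form as a bijection followed by an order-preserving surjection, the latter being built from $\mu$'s exactly as in the ordinary commutative-semigroup case analogous to \ref{thm:univ_mon_and_comm_mon}(ii). The compatibility equation \eqref{eq:compat_symm_mult} is the essential ingredient allowing the bijection and ordered-surjection parts to be commuted past one another when comparing alternative factorizations; this is Lafont's presentation \cite[2.3, p. 266]{Laf:Bool}. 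For part (i), one extends further by adjoining $\eta: 0 \to 1$ subject to \eqref{eq:unit} and \eqref{eq:comp_symm_unit}, and invokes the epi--mono factorization of arbitrary maps in $\finCard$, yielding the Burroni--Grandis presentation \cite[2.2]{Burr:HdWordPr}, \cite[4.1]{Gra:Symm}.

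The main obstacle at each stage is verifying completeness of the relations: that any two words in the generators representing the same underlying set-function in $\finCard_b$, $\finCard_s$, or $\finCard$ are provably equal using only the listed axioms. I would address this by fixing a canonical normal form --- a chosen reduced expression for the bijection part, followed by a canonical composite of $\mu$'s realizing the order-preserving surjection part, followed by a canonical composite of $\eta$'s realizing the order-preserving injection part --- and showing that any alternative representative can be rewritten to this normal form by a finite sequence of applications of the axioms. This rewriting step, rather than the bookkeeping of defining $S^\sharp$ on generators, is where the genuine content of each part resides.
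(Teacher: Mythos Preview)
Your approach is correct and is essentially the same as the paper's: both reduce the theorem to the existence of presentations of $\finCard_b$, $\finCard_s$, and $\finCard$ as strict monoidal categories with generator $1$ and generating morphisms $\sigma$, $\mu$, $\eta$ subject exactly to the axioms of \ref{def:symm_semi_mon}. The only difference is one of depth: the paper's proof simply cites these presentations from Lafont, Burroni, and Grandis and observes that the universal property follows immediately, whereas you outline how the presentations themselves would be established (Coxeter presentation for (iii), normal forms via bijection--ordered-surjection factorization for (ii), epi--mono factorization for (i)). Your remark that the far-commutation relations $\sigma_i\sigma_j = \sigma_j\sigma_i$ for $|i-j|>1$ come for free from monoidal interchange is exactly the point that makes the Moore relations reduce to the two equations \eqref{eq:symm}.
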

\begin{proof}
(i) is explicitly proved in the cited work of Grandis and also follows immediately from the cited earlier result of Burroni.  (ii) follows immediately from the cited result of Lafont, which gives a presentation of the strict monoidal category $\finCard_s$ in terms of the generators $\mu,\sigma$ and the relations for a symmetric semigroup (\ref{def:symm_semi_mon}, \ref{rem:equivalent_defs_symm_sgrp_mnd}).  Similarly, (iii) follows from the cited result of Lafont, which presents the strict monoidal category $\finCard_b$ in terms of the generator $\sigma$ and the relations for a symmetry on an object (\ref{def:symm_semi_mon}).
\end{proof}

\begin{remark}\label{rem:results_in_symm_mon_setting_follow}
We may apply the preceding theorem to \textit{commutative} monoids in symmetric strict monoidal categories $\V$ by way of \ref{rem:comm_sgrp_yields_symm_sgrp}, yielding a proof of \ref{thm:univ_mon_and_comm_mon}(iii).  Similarly we obtain a proof of \ref{thm:univ_mon_and_comm_mon}(iv).
\end{remark}

\subsection{Example: The tangent functor as symmetric (co)semigroup}\label{sec:tng_func_symm_cosgrp}
Given a tangent category $(\X,\T)$, the tangent endofunctor $T:\X \rightarrow \X$ carries the structure of a symmetric semigroup in the opposite $[\X,\X]^{\op}$ of the endofunctor category $[\X,\X]$.  Indeed, within the definition of tangent structure (\ref{defnTangentCategory}), the axioms under the heading \textit{coherence of $\ell$ and $c$} assert precisely that $(T,\ell,c)$ is a symmetric semigroup in $[\X,\X]^{\op}$, where $\ell:T \rightarrow TT$ is the vertical lift and $c:TT \rightarrow TT$ is the canonical flip.


\section{Symmetric cosimplicial objects}\label{sec:symCosimp}

The category $\Delta$ of \textit{positive} finite ordinals and order preserving maps admits a geometric interpretation that can be illustrated by way of a well-known functor from $\Delta$ to the category of topological spaces, sending $n$ to the \textit{standard geometric $(n-1)$-simplex} $\Delta_{n - 1} \subseteq \R^n$, i.e. the convex hull of the standard basis vectors in $\R^n$.  Consequently, presheaves on $\Delta$ abound in topology and are called \textit{simplicial sets}.  A similar geometric interpretation applies to each of the categories of finite cardinals that we have considered in \ref{sec:cats_fin_cards}, leading to several corresponding variants of the notion of simplicial set:

\begin{definition}\label{def:varns_on_simpl_obj}
Let $\C$ be a category.
\begin{enumerate}[{\rm (i)}]
\item
\begin{enumerate}
\item A \textbf{degenerative object} $C$ in $\C$ is a functor $\finOrd_s^\op \rightarrow \C$.
\item A \textbf{codegenerative object} $C$ in $\C$ is a functor $C:\finOrd_s \rightarrow \C$.
\end{enumerate}
\item 
\begin{enumerate}
\item An \textbf{(augmented) simplicial object} $C$ in $\C$ is a functor $C:\finOrd^\op \rightarrow \C$.
\item An \textbf{(augmented) cosimplicial object} $C$ in $\C$ is a functor $C:\finOrd \rightarrow \C$.
\end{enumerate}
\item\begin{enumerate}
\item A \textbf{permutative object} $C$ in $\C$ is a functor $C:\finCard_b^\op \rightarrow \C$.
\item A \textbf{copermutative object} $C$ in $\C$ is a functor $C:\finCard_b \rightarrow \C$.
\end{enumerate}
\item
\begin{enumerate}
\item A \textbf{symmetric degenerative object} $C$ in $\C$ is a functor $C:\finCard_s^\op \rightarrow \C$.
\item A \textbf{symmetric codegenerative object} $C$ in $\C$ is a functor $C:\finCard_s \rightarrow \C$.
\end{enumerate}
\item (Barr \cite{Barr}, Grandis \cite{Gra:Symm})
\begin{enumerate}
\item An \textbf{(augmented) symmetric simplicial object}\footnote{The terminology is due to Grandis \cite{Gra:Symm}; Barr \cite{Barr} employed the term \textit{augmented FDP complex}.} $C$ in $\C$ is a functor $C:\finCard^\op \rightarrow \C$.
\item An \textbf{(augmented) symmetric cosimplicial object} $C$ in $\C$ is a functor $C:\finCard \rightarrow \C$.
\end{enumerate}
\end{enumerate}
For brevity, we will omit the modifier ``augmented'' when employing these terms within the present paper.  The \textit{category of degenerative objects} in $\C$ is defined as the functor category $[\finOrd^\op_s,\C]$.  Similarly, each of the listed notions determines an associated category in which the morphisms are arbitrary natural transformations.
\end{definition}

\begin{remark}
Given a category $\C$, any functor $F:\A \rightarrow \B$ determines a functor $[F,\C]:[\B,\C] \rightarrow [\A,\C]$ between the associated categories of $\C$-valued functors.  In particular, the inclusions
\begin{equation}\label{eq:incl_cats_fincards}
\xymatrix{
                           & \finOrd_s \ar@{^{(}->}[d] \ar@{^{(}->}[r] & \finOrd \ar@{^{(}->}[d]\\
\finCard_b \ar@{^{(}->}[r] & \finCard_s \ar@{^{(}->}[r]                & \finCard
}
\end{equation}
induce functors between the various functor categories defined in \ref{def:varns_on_simpl_obj}.  For example, every symmetric degenerative object carries the structure of a permutative object.
\end{remark}

\begin{remark}\label{rem:symm_grp_actions}
By definition, a \textbf{graded object} $C$ in a category $\C$ is a sequence of objects $C_n$ in $\C$ indexed by the finite cardinals $n$.  Observe that a copermutative object $C$ in $\C$ is equivalently described as a graded object $C$ in $\C$ equipped with a sequence of group homomorphisms $S_n \rightarrow \Aut_\C(C_n)$ from the \textbf{symmetric groups} $S_n = \Aut_\finCard(n)$ into the automorphism groups $\Aut_\C(C_n)$ of the objects $C_n$ of $\C$ (\S \ref{sec:notn}).  Dually, a permutative object $C$ in $\C$ is a graded object equipped with group homomorphisms $S^\op_n \rightarrow \Aut_\C(C_n)$ where $S^\op_n$ is the opposite of the symmetric group.  But every group $G$ is isomorphic to its opposite $G^\op$ via the map $(-)^{-1}:G \rightarrow G^\op$, so copermutative objects are in bijective correspondence with permutative objects.  From another perspective, this bijective correspondence is induced by an identity-on-objects isomorphism of categories
$$\finCard_b \xrightarrow{(-)^{-1}} \finCard_b^\op$$
given on arrows by $\xi \mapsto \xi^{-1}$.
\end{remark}

\subsection{Example: The degenerative object of iterated tangent functors}\label{exa:deg_obj_it_tgt}
Given a tangent category $(\X,\T)$, we saw in \ref{sec:tng_func_symm_cosgrp} that the tangent endofunctor $T:\X \rightarrow \X$ carries the structure of a symmetric semigroup $(T,\ell,c)$ in the opposite $[\X,\X]^\op$ of the category of endofunctors on $\X$.  Hence by \ref{thm:univ_symm_mon_sgrp}, this symmetric semigroup determines a corresponding strict monoidal functor $T^\sharp:\finCard_s \rightarrow [\X,\X]^\op$ sending each finite cardinal $n$ to the $n$-th iterate $T^n$ of $T$.   This functor is an example of a symmetric codegenerative object in $[\X,\X]^\op$, equivalently, a symmetric degenerative object $\finCard_s^\op \rightarrow [\X,\X]$ in the category of endofunctors on $\X$.

\subsection{Symmetric simplicial objects by generators and relations}
It is well-known that the category of finite ordinals has a convenient presentation by generators and relations, leading to a familiar equivalent way of defining simplicial sets in terms of \textit{face} and \textit{degeneracy} maps; see, e.g. \cite[VII.5]{MacL}.  Barr \cite{Barr} and Grandis \cite{Gra:Symm} gave an analogous presentation of the larger category of finite cardinals $\finCard$ in terms of the following larger collection of generators:

\begin{definition}\label{def:gens}
\emptybox
\begin{enumerate}[{\rm (i)}]
\item We denote by
\begin{equation}\label{eq:epsilon}\varepsilon^n_i:n + 1 \rightarrow n\;\;\;\;\;\;(n,i \in \N, 1 \leqslant i \leqslant n)\end{equation}
the map $(i - 1) + \mu + (n - i):n + 1 \rightarrow n$ in the notation of \ref{sec:cats_fin_cards}, where $\mu:1 + 1 \rightarrow 1$ is the multiplication carried by $1$ (\ref{sec:univ_mon_sgrps}).  We call these \textbf{codegeneracy maps}.
\item We denote by
\begin{equation}\label{eq:delta}\delta^n_i:n \rightarrow n + 1\;\;\;\;\;\;(n,i \in \N, 1 \leqslant i \leqslant n + 1)\end{equation}
the map $(i - 1) + \eta + (n - i + 1):n \rightarrow n + 1$ in the notation of \ref{sec:cats_fin_cards}, where $\eta:0 \rightarrow 1$ is the unit carried by $1$ (\ref{sec:univ_mon_sgrps}).  We call these \textbf{coface maps}.
\item We denote by
\begin{equation}\label{eq:sigma}\sigma^n_i:n \rightarrow n\;\;\;\;\;\;(n,i \in \N, 1 \leqslant i \leqslant n - 1)\end{equation}
the map $(i - 1) + \sigma + (n - i - 1):n \rightarrow n$ in the notation of \ref{sec:cats_fin_cards}, where $\sigma:1 + 1 \rightarrow 1 + 1$ is the symmetry carried by $1$ (\ref{rem:comm_sgrp_yields_symm_sgrp}).  We call these \textbf{symmetry maps}.
\end{enumerate}
We shall omit the superscripts $n$ when they are clear from the context.
\end{definition}

The following theorem is well-known; for example, a proof is given in \cite[VII.5]{MacL}.  

\begin{theorem}
The category $\finOrd$ of finite ordinals and order preserving maps can be presented by generators and relations (in the sense of \textnormal{\cite[II.8]{MacL}}) as follows:
\begin{enumerate}[{\rm (i)}]
\item Generators: The maps $\varepsilon^n_i, \delta^n_i$ of \ref{def:gens}.
\item Relations: The following \textnormal{\textbf{pure codegeneracy relations}}:
\begin{equation}\label{eq:codegen_rels}\varepsilon_i\s\varepsilon_j = \varepsilon_{j+1}\s\varepsilon_i\;\;\;\;\;\;(i \leqslant j)\end{equation}
together with the following \textnormal{\textbf{pure coface relations}}:
\begin{equation}\label{eq:coface_rels}\delta_j\s\delta_i = \delta_i\s\delta_{j+1}\;\;\;\;\;\;(i \leqslant j)\end{equation}
as well as the following \textnormal{\textbf{coface-codegeneracy relations}}:
\begin{equation}\label{eq:cofcod_rels}\delta_i\s\varepsilon_j =
\begin{cases}
\varepsilon_{j-1}\s\delta_i & (i < j)\\
1                           & (i = j,\;i = j+1)\\
\varepsilon_j\s\delta_{i-1} & (i > j+1).
\end{cases}\end{equation}
\end{enumerate}
\end{theorem}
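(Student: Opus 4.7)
The plan is to follow the standard strategy for presenting the augmented simplex category, which consists of three steps: verify the relations hold in $\finOrd$; show that every morphism of $\finOrd$ is represented by some word in the generators; and establish a canonical normal form so that distinct classes modulo the relations correspond to distinct morphisms.

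I would first verify the three relation families directly. Each $\varepsilon_i^n: n+1 \to n$ identifies the two adjacent inputs mapping to $i - 1$, while each $\delta_i^n: n \to n+1$ is the order-preserving injection whose image omits the value $i - 1$. Tracking the effect of both sides of each of the pure codegeneracy, pure coface, and coface-codegeneracy relations on an arbitrary element of the domain ordinal is then a routine index-chase.

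For representability of every morphism, I would invoke the standard epi-mono factorization in $\finOrd$: every order-preserving $f: n \to m$ factors uniquely as an order-preserving surjection $n \twoheadrightarrow k$ followed by an order-preserving injection $k \hookrightarrow m$. The surjective factor is determined by its set of collapsed adjacent pairs and admits a canonical expression $\varepsilon_{i_1} \s \cdots \s \varepsilon_{i_r}$ with $i_1 < \cdots < i_r$; dually, the injective factor admits a canonical expression $\delta_{j_1} \s \cdots \s \delta_{j_s}$ with a prescribed ordering on the omitted values. Thus every morphism of $\finOrd$ has a distinguished normal form $\varepsilon_{i_1} \s \cdots \s \varepsilon_{i_r} \s \delta_{j_1} \s \cdots \s \delta_{j_s}$ in which codegeneracies precede cofaces in diagrammatic order.

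The substantive step is showing that the three relation families suffice to rewrite any word in the generators into this normal form. The pure codegeneracy relations sort adjacent $\varepsilon$'s into the canonical index order; the pure coface relations do the same for the $\delta$'s; and the mixed coface-codegeneracy relations either cancel an adjacent out-of-order pair $\delta_i \s \varepsilon_j$ (when $i \in \{j, j+1\}$) or rewrite it as $\varepsilon_{?} \s \delta_{?}$ with appropriate index shifts, thereby migrating all cofaces past all codegeneracies. The main obstacle is giving a clean termination argument for this rewriting system, which can be handled by a lexicographic measure combining word length, the number of coface-before-codegeneracy inversions, and the local out-of-order count within each pure block; uniqueness of the epi-mono factorization then ensures that distinct normal forms represent distinct morphisms of $\finOrd$. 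Since the result is classical, a full proof is available in Mac Lane's \emph{Categories for the Working Mathematician} (VII.5), which the authors cite directly.
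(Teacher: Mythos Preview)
The paper does not prove this theorem at all; it simply prefaces the statement with ``The following theorem is well-known; for example, a proof is given in \cite[VII.5]{MacL}.'' Your proposal sketches precisely the Mac Lane argument (epi--mono factorization, canonical normal form, rewriting via the three relation families) and even cites the same source, so you are fully aligned with the paper's treatment.
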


\begin{remark}\label{rem:finords_gens_rels}
One can also present the category $\finOrd_s$ of finite ordinals and order preserving surjections by generators and relations, namely the codegeneracies $\varepsilon^n_i$ and the pure codegeneracy relations \cite[VII.5, Exercise 3]{MacL}.
\end{remark}

By adding the symmetry maps as additional generators, together with further relations, Barr and Grandis established the following variation on the preceding theorem:

\subsection{Theorem \textnormal{(Barr \cite{Barr}, Grandis \cite[4.2]{Gra:Symm})}}\label{thm:fincard_gens_rels}
\textit{The category $\finCard$ of finite cardinals and arbitrary maps can be presented by generators and relations as follows:
\begin{enumerate}[{\rm (i)}]
\item Generators: The maps $\varepsilon^n_i, \delta^n_i, \sigma^n_i$ of \ref{def:gens}.
\item Relations: The relations \eqref{eq:codegen_rels}, \eqref{eq:coface_rels}, \eqref{eq:cofcod_rels} together with the following \textnormal{\textbf{Moore relations}}
\begin{equation}\label{eq:moore_rels}\sigma_i\s\sigma_i = 1\;\;\;\;\sigma_i\s\sigma_{i+1}\s\sigma_i = \sigma_{i+1}\s\sigma_i\s\sigma_{i+1}\;\;\;\;\;\sigma_j \s \sigma_i = \sigma_i \s \sigma_j\;(i < j - 1).\end{equation}
as well as the following \textnormal{\textbf{codegeneracy-symmetry relations}}:
\begin{equation}\label{eq:codsymm_rels}
\begin{array}{llllllll}
\varepsilon_j\s\sigma_i & = & \sigma_i\s\varepsilon_j\;\;(i < j - 1) & & & \varepsilon_i\s\sigma_i & = & \sigma_{i+1}\s\sigma_i\s\varepsilon_{i+1}\\
\varepsilon_j\s\sigma_i & = & \sigma_{i+1}\s\varepsilon_j\;\;(i > j) & & & \sigma_i\s\varepsilon_i & = & \varepsilon_i\;.
\end{array}
\end{equation}
and the following \textnormal{\textbf{coface-symmetry relations}}:
\begin{equation}\label{eq:cofsymm_rels}\delta_j\s\sigma_i = \sigma_i\s\delta_j\;(i < j-1)\;\;\;\;\;\;\delta_i\s\sigma_i = \delta_{i+1}\;\;\;\;\;\;\delta_j\s\sigma_i = \sigma_{i-1}\s\delta_j\;(i > j)\;.\end{equation}
\end{enumerate}}

\begin{remark}\label{rem:moore_pres_symm_grp}
Grandis \cite[\S 3]{Gra:Symm} notes that for a fixed finite cardinal $n$, the maps $\sigma^n_i$ generate the symmetric group $S_n$, and the Moore relations \eqref{eq:moore_rels} constitute a classical presentation of this group by generators and relations.
\end{remark}

By discarding the coface maps and all the relations involving them, we shall now establish an analogous presentation of $\finCard_s$ in terms of the codegeneracy and symmetry maps:

\begin{theorem}\label{thm:fincards_gens_rels}
The category $\finCard_s$ of finite cardinals and surjections can be presented by generators and relations as follows:
\begin{enumerate}[{\rm (i)}]
\item Generators: The maps $\varepsilon^n_i,\sigma^n_i$ of \ref{def:gens}.
\item Relations: The pure codegeneracy relations \eqref{eq:codegen_rels}, the Moore relations \eqref{eq:moore_rels}, and the codegeneracy-symmetry relations \eqref{eq:codsymm_rels}.
\end{enumerate}
\end{theorem}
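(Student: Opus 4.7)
The plan is to prove the presentation via a normal-form argument, combining the known presentations of $\finOrd_s$ (Remark \ref{rem:finords_gens_rels}) and of the symmetric groups $S_n$ (Remark \ref{rem:moore_pres_symm_grp}) with the codegeneracy-symmetry relations used as rewriting rules that push symmetries leftwards past codegeneracies. Let $\mathcal{P}$ denote the category freely generated by the $\varepsilon^n_i$ and $\sigma^n_i$ modulo the three families of relations listed. Every generator is a surjection in $\finCard$, and every relation is a particular case of a relation from Theorem \ref{thm:fincard_gens_rels}, so there is a canonical identity-on-objects functor $F : \mathcal{P} \rightarrow \finCard_s$. It suffices to show $F$ is full and faithful.

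For fullness, I would invoke the classical observation that every surjection $f : m \rightarrow n$ admits a \emph{shuffle factorization} $f = \pi \s e$, where $e : m \rightarrow n$ is the unique order-preserving surjection with $|e^{-1}(i)| = |f^{-1}(i)|$ for all $i$, and $\pi \in S_m$ is the unique permutation of the source that is order-preserving on each fibre of $f$. The factor $e$ decomposes into codegeneracies by Remark \ref{rem:finords_gens_rels}, and $\pi$ into symmetries by Remark \ref{rem:moore_pres_symm_grp}, giving the required decomposition into generators of $\mathcal{P}$.

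Faithfulness is the substantive content of the theorem, and my strategy is to establish a normal-form theorem for $\mathcal{P}$: every word in the generators can be reduced, using only the listed relations, to one of the form $\pi \s e$ in which $\pi$ is the Moore normal form of a permutation and $e$ is the pure-codegeneracy normal form of an order-preserving surjection. The codegeneracy-symmetry relations \eqref{eq:codsymm_rels} drive the rewriting: the first two cases push a $\sigma$ leftwards past a $\varepsilon$ when $|i-j| \geqslant 2$ or $i > j$; the third case handles $i = j$; and the missing case $\varepsilon_{j}\s\sigma_{j-1}$ is obtained by combining the third relation with the Moore relations to derive the identity $\varepsilon_{j}\s\sigma_{j-1} = \sigma_{j-1}\s\sigma_{j}\s\varepsilon_{j-1}$. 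Once all $\sigma$'s have been moved past all $\varepsilon$'s, the relation $\sigma_i \s \varepsilon_i = \varepsilon_i$ together with pure-codegeneracy reshapings of the $\varepsilon$-part (for instance $\sigma_2\s\varepsilon_1\s\varepsilon_1 = \sigma_2\s\varepsilon_2\s\varepsilon_1 = \varepsilon_2\s\varepsilon_1$) is applied at the $\pi$-$e$ boundary to eliminate transpositions lying within fibres of $e$, forcing $\pi$ into its canonical shuffle representative; the Moore and pure-codegeneracy relations then normalize $\pi$ and $e$ separately. Faithfulness then follows: $F$ carries the normal form of $w \in \mathcal{P}$ to the (unique) shuffle factorization of $F(w)$, so two normal forms agreeing under $F$ must have equal $\pi$- and $e$-parts, and by uniqueness of the Moore and pure-codegeneracy normal forms they are literally the same word in $\mathcal{P}$.

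The main obstacle will be verifying confluence of this rewriting system, and in particular controlling the interaction of $\sigma_i \s \varepsilon_i = \varepsilon_i$ with the pure-codegeneracy reshapings so as to guarantee convergence to the shuffle representative rather than to one of the many other available factorizations of $f$ differing by intra-fibre permutations of the source. This is a finite but delicate combinatorial check, mirroring in miniature the analysis of Barr and Grandis used to establish Theorem \ref{thm:fincard_gens_rels}, now simplified by the absence of the coface generators $\delta^n_i$ and all relations involving them.
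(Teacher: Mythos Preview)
Your approach is plausible but takes a genuinely different route from the paper's.  The paper does not carry out a rewriting or normal-form analysis at all.  Instead, writing $\C$ for the category presented by the listed generators and relations, it first equips $\C$ with a strict monoidal structure (defining $(-)+m$ and $m+(-)$ on generators and checking that the listed relations are respected), then observes that the object $1$ of $\C$ is a symmetric semigroup $(1,\varepsilon^1_1,\sigma^2_1)$, each of the symmetric-semigroup axioms being an instance of one of the listed relations.  Lafont's universal property (Theorem~\ref{thm:univ_symm_mon_sgrp}(ii)) then yields a strict monoidal functor $S^\sharp:\finCard_s \rightarrow \C$ sending $\mu,\sigma$ to $\varepsilon^1_1,\sigma^2_1$; one checks that $S^\sharp$ sends each $\varepsilon^n_i,\sigma^n_i$ in $\finCard_s$ to its namesake in $\C$, so that the composite $M\s S^\sharp:\C \rightarrow \C$ (with $M$ your canonical functor $F$) fixes the generators and is therefore the identity.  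Faithfulness of $M$ follows immediately, and fullness is argued exactly as you do.

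What each approach buys: the paper's argument is short and avoids confluence entirely, at the cost of importing Lafont's monoidal presentation of $\finCard_s$ as a black box.  Your approach is more self-contained and elementary, but the confluence verification you flag as ``the main obstacle'' is genuinely the crux and is not trivial---in particular, the rule $\varepsilon_i\s\sigma_i = \sigma_{i+1}\s\sigma_i\s\varepsilon_{i+1}$ increases the number of $\sigma$'s, so termination is not by a simple length argument, and you will need a well-chosen measure together with a careful critical-pair analysis to ensure convergence to the shuffle representative rather than merely to some factorization $\pi\s e$.  Your derived rule $\varepsilon_j\s\sigma_{j-1} = \sigma_{j-1}\s\sigma_j\s\varepsilon_{j-1}$ is correct and does follow from the listed relations (precompose the second relation at $i=j-1$ by $\sigma_{j-1}\s\sigma_j$ and use the Moore relations), so the rewriting system is complete in the required sense; but the proposal as written stops short of the actual proof of confluence, which is where all the work lies in your strategy.
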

\begin{proof}
Let $\C$ denote the category presented by the given (formal) generators and relations (per \cite[II.8]{MacL}), with objects all finite cardinals.  We will not distinguish notationally between the morphisms $\varepsilon^n_i,\sigma^n_i$ in $\finCard_s$ and the generators in $\C$ that bear the same names.

First we show that $\C$ carries the structure of a strict monoidal category.  In order to define a functor $+:\C \times \C \rightarrow \C$, given on objects by addition, it suffices to define functors $(-) + m:\C \rightarrow \C$ and $m + (-):\C \rightarrow \C$ for all $m \in \ob\C$ and check that they satisfy the compatibility condition in \cite[II.3.1]{MacL}.  On generators, we define
$$\varepsilon^n_i + m := \varepsilon^{n+m}_i,\;\;\sigma^n_i + m := \sigma^{n+m}_i,\;\;m + \varepsilon^n_i := \varepsilon^{m+n}_{m+i},\;\;m + \sigma^n_i := \sigma^{m+n}_{m+i},$$
and it follows immediately from the relations \eqref{eq:codegen_rels}, \eqref{eq:moore_rels}, \eqref{eq:codsymm_rels} that these assignments respect these relations (in the sense of \cite[II.8.1]{MacL}) and so define functors as needed.  We must prove that the compatibility condition
$$(\alpha + m) \s\: (n' + \beta) = (n + \beta) \s\: (\alpha + m')$$
holds for arbitrary morphisms $\alpha:n \rightarrow n'$ and $\beta:m \rightarrow m'$ in $\C$, but it suffices to verify this equation in the cases where $\alpha,\beta$ are generators, and in each of these (four) cases the needed equation reduces to an instance of one of the relations in \eqref{eq:codegen_rels}, \eqref{eq:moore_rels}, \eqref{eq:codsymm_rels}.

The resulting functor $+:\C \times \C \rightarrow \C$ clearly satisfies the associativity law on objects.  The verification of the associativity law 
$$(\alpha + \beta) + \gamma = \alpha + (\beta + \gamma):n + m + k \rightarrow n' + m' + k'$$
for arrows $\alpha:n \rightarrow n'$, $\beta:m \rightarrow m'$, and $\gamma:k \rightarrow k'$ in $\C$ reduces to verification of the equations
$$(\alpha + m) + k = \alpha + (m + k),\;\;(n' + \beta) + k = n' + (\beta + k),\;\;(n' + m') + \gamma = n' + (m' + \gamma).$$
It suffices to consider the cases where $\alpha,\beta,\gamma$ are generators, and then the equations are immediate from the definition of $+$.  Verification of the unit laws for $(\C,+,0)$ reduces to showing that the functors $0 + (-),\;(-) + 0:\C \rightarrow \C$ are merely the identity functor, but this is trivially verified on generators.

We claim that the object $1$ of $\C$ carries the structure of a symmetric semigroup $S = (1,\bar{\mu},\bar{\sigma})$ with $\bar{\mu} := \varepsilon^1_1:2 \rightarrow 1$ and $\bar{\sigma} := \sigma^2_1:2 \rightarrow 2$.  The associativity law $(1 + \bar{\mu})\s \bar{\mu} = (\bar{\mu} + 1)\s \bar{\mu}$ is precisely the equation $\varepsilon^2_2 \s \varepsilon^1_1 = \varepsilon^2_1 \s \varepsilon^1_1$, which is one of the pure codegeneracy relations \eqref{eq:codegen_rels}.  The equations \eqref{eq:symm} making $\bar{\sigma}$ a symmetry on $1$ are instances of the Moore relations \eqref{eq:moore_rels}.  The equation \eqref{eq:alt_compat_symm_mult} relating $\bar{\mu}$ and $\bar{\sigma}$ is precisely the equation $\varepsilon^2_1\s\sigma^2_1 = \sigma^3_2\s\sigma^3_1\s\varepsilon^2_2$, which is one of the codegeneracy-symmetry relations \eqref{eq:codsymm_rels}.  The commutative law for $S$ is an instance of the last codegeneracy-symmetry relation \eqref{eq:codsymm_rels}.

Hence by \ref{thm:univ_symm_mon_sgrp}(ii) there is a unique strict monoidal functor $S^\sharp:\finCard_s \rightarrow \C$ with $S^\sharp(1) = 1$, $S^\sharp(\mu) = \bar{\mu}$, and $S^\sharp(\sigma) = \bar{\sigma}$.  Note that $S^\sharp$ is identity-on-objects and sends the morphisms $\varepsilon^n_i,\sigma^n_i$ in $\finCard_s$ to the similarly named generators in $\C$.  Indeed, the definition (\ref{def:gens}) of the morphisms $\varepsilon^n_i,\sigma^n_i$ in $\finCard_s$ entails that the strict monoidal functor $S^\sharp$ sends them to 
$$S^\sharp(\varepsilon^n_i) \;\;=\;\; (i - 1) + \varepsilon^1_1 + (n - i) \;\;=\;\; \varepsilon^{n}_i\;\;\;:\;\;\;n + 1 \rightarrow n$$
$$S^\sharp(\sigma^n_i) \;\;=\;\; (i - 1) + \sigma^2_1 + (n - i - 1) \;\;=\;\; \sigma^n_i\;\;\;:\;\;\;n \rightarrow n\;,$$
respectively (using the definitions of $\bar{\mu}$, $\bar{\sigma}$, and $+$).

Next we define an identity-on-objects functor $M:\C \rightarrow \finCard_s$ by sending the generators $\varepsilon^n_i,\sigma^n_i$ in $\C$ to the similarly named morphisms $\varepsilon^n_i,\sigma^n_i$ in $\finCard_s$.  This assignment respects the relations defining $\C$, simply because the morphisms $\varepsilon^n_i,\sigma^n_i$ in $\finCard_s \hookrightarrow \finCard$ satisfy these relations (by \ref{thm:fincard_gens_rels}).

The composite functor $M \s S^\sharp:\C \rightarrow \C$ preserves the generators $\varepsilon^n_i$ and $\sigma^n_i$ and so (by the universal property of $\C$) must be the identity functor.  Hence $M$ is faithful.  We claim that $M$ is also full (and hence is an isomorphism).  Firstly, every morphism in $\finCard_s$ can be expressed as a composite $\tau \s \alpha:n \rightarrow m$ where $\tau \in S_n$ is a permutation and $\alpha:n \rightarrow m$ is order preserving \cite[\S 3]{Gra:Symm}, and then $\alpha$ is necessarily surjective.  But by \ref{rem:moore_pres_symm_grp} we can express $\tau$ as a composite of symmetry maps $\sigma^n_i$, and by \ref{rem:finords_gens_rels} we can express $\alpha$ as a composite of codegeneracy maps.  Therefore the symmetries and codegeneracies $\sigma^n_i,\varepsilon^n_i$ generate $\finCard_s$, so since they lie in the image of $M$ it follows that $M$ is full.
\end{proof}

As corollaries to the above theorems, we obtain not only the classical description of cosimplicial objects in terms of coface and codegeneracy morphisms but also analogous descriptions of symmetric cosimplicial objects and symmetric codegenerative objects, as follows:

\begin{corollary}\label{thm:pres_cosimpl_obs_gens_rels}
Let $\C$ be a category.
\begin{enumerate}[{\rm (i)}]
\item A cosimplicial object $C:\finOrd \rightarrow \C$ is equivalently given by a graded object $C$ in $\C$ equipped with morphisms
\begin{equation}\label{eq:grob_cod}\varepsilon^n_i:C_{n + 1} \rightarrow C_n\;\;\;\;\;\;(n,i \in \N, 1 \leqslant i \leqslant n)\end{equation}
\begin{equation}\label{eq:grob_cof}\delta^n_i:C_n \rightarrow C_{n + 1}\;\;\;\;\;\;(n,i \in \N, 1 \leqslant i \leqslant n + 1)\end{equation}
in $\C$ that satisfy the equations \eqref{eq:codegen_rels}, \eqref{eq:coface_rels}, \eqref{eq:cofcod_rels}. 
\item \textnormal{(Barr \cite{Barr}, Grandis \cite{Gra:Symm})} A symmetric cosimplicial object $C:\finCard \rightarrow \C$ is equivalently given by a graded object $C$ in $\C$ equipped with morphisms $\varepsilon^n_i$ and $\delta^n_i$ as in \eqref{eq:grob_cod}, \eqref{eq:grob_cof} as well as morphisms
\begin{equation}\label{eq:grob_symm}\sigma^n_i:C_n \rightarrow C_n\;\;\;\;\;\;(n,i \in \N, 1 \leqslant i \leqslant n - 1)\end{equation}
in $\C$ such that these morphisms satisfy the equations \eqref{eq:codegen_rels}, \eqref{eq:coface_rels}, \eqref{eq:cofcod_rels}, \eqref{eq:moore_rels}, \eqref{eq:codsymm_rels}, \eqref{eq:cofsymm_rels}.
\item A symmetric codegenerative object $C:\finCard_s \rightarrow \C$ is equivalently given by a graded object $C$ in $\C$ equipped with morphisms $\varepsilon^n_i$, $\sigma^n_i$ as in \eqref{eq:grob_cod}, \eqref{eq:grob_symm} such that these morphisms satisfy the equations \eqref{eq:codegen_rels}, \eqref{eq:moore_rels}, \eqref{eq:codsymm_rels}.
\end{enumerate}
\end{corollary}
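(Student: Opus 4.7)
The plan is to derive all three equivalences as immediate corollaries of the category-theoretic presentations by generators and relations established in the preceding results. The governing principle I would invoke is a standard fact about presentations (\cite[II.8]{MacL}): if a category $\mathcal{A}$ is presented by generators $G$ and relations $R$, then a functor $\mathcal{A} \to \C$ is determined uniquely and freely by (a) an object assignment $\ob\mathcal{A} \to \ob\C$ together with (b) a function sending each generator in $G$ to a morphism in $\C$ with appropriately matching domain and codomain, such that these assigned morphisms satisfy the formal translates of the relations in $R$.

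For part (i), I would invoke the classical presentation of $\finOrd$ by the codegeneracies $\varepsilon^n_i$ and cofaces $\delta^n_i$ subject to \eqref{eq:codegen_rels}, \eqref{eq:coface_rels}, \eqref{eq:cofcod_rels}. For part (ii), I would invoke Theorem \ref{thm:fincard_gens_rels}, which adjoins the symmetries $\sigma^n_i$ as generators together with the Moore relations \eqref{eq:moore_rels}, codegeneracy-symmetry relations \eqref{eq:codsymm_rels}, and coface-symmetry relations \eqref{eq:cofsymm_rels}. For part (iii), I would invoke Theorem \ref{thm:fincards_gens_rels}, which presents $\finCard_s$ by the codegeneracies and symmetries subject to \eqref{eq:codegen_rels}, \eqml{eq:moore_rels}, and \eqref{eq:codsymm_rels}. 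In each of the three cases, the graded object $C$ in $\C$ supplies the object assignment $n \mapsto C_n$, while the morphisms listed in the corollary supply the values on the corresponding generators; since the relations are formally identical to those in the presented category, the respect-the-relations condition on assignments translates verbatim into the stated equations in $\C$.

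There is essentially no obstacle, as the substantive work has already been carried out in the preceding theorems. The only item worth checking is the bookkeeping: one must confirm that the domains and codomains of the morphisms $\varepsilon^n_i:C_{n+1} \to C_n$, $\delta^n_i:C_n \to C_{n+1}$, $\sigma^n_i:C_n \to C_n$ in $\C$ match the translation of the domains and codomains of the corresponding generators $\varepsilon^n_i:n+1 \to n$, $\delta^n_i:n \to n+1$, $\sigma^n_i:n \to n$ under the chosen object assignment $n \mapsto C_n$, which is immediate by inspection.
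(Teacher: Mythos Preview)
Your proposal is correct and matches the paper's approach exactly: the corollary is stated without proof in the paper precisely because it follows immediately from the presentations of $\finOrd$, $\finCard$, and $\finCard_s$ established in the preceding theorems, via the universal property of a presentation by generators and relations. One trivial remark: you have a typo ``\verb|\eqml{eq:moore_rels}|'' that should read \verb|\eqref{eq:moore_rels}|.
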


We call the structural morphisms $\varepsilon^n_i,\delta^n_i,\sigma^n_i$ in \ref{thm:pres_cosimpl_obs_gens_rels} \textbf{codegeneracies}, \textbf{cofaces}, and \textbf{symmetries}, respectively, just like their similarly notated counterparts in $\finCard$.  Dually, a symmetric \textit{simplicial} object $C$ carries \textbf{degeneracy morphisms} $\varepsilon^n_i:C_n \rightarrow C_{n+1}$, \textbf{face morphisms} $\delta^n_i:C_{n + 1} \rightarrow C_n$, and \textbf{symmetries} $\sigma^n_i:C_n \rightarrow C_n$.

\subsection{The codegenerative object determined by a symmetric semigroup}\label{sec:symm_cod_obj_det_symm_sgrp}

Given any symmetric semigroup $(S,m,s)$ in a strict monoidal category $\V$, the corresponding strict monoidal functor $S^\sharp:\finCard_s \rightarrow \V$ (\ref{thm:univ_symm_mon_sgrp}) is an example of a symmetric codegenerative object in $\V$.  Its underlying graded object consists of the $n$-fold monoidal powers $S^n$ of $S$.  Since $S^\sharp$ is strict monoidal, the definitions of the generators of $\finCard_s$ in \ref{def:gens} entail that the codegeneracies and symmetries carried by $S^\sharp$ can be expressed as
$$\varepsilon^n_i \;=\; S^{i - 1}m S^{n - i}\;\;:\;\;S^{n + 1} \rightarrow S^n$$
$$\sigma^n_i \;=\; S^{i - 1}sS^{n - i - 1}\;\;:\;\;S^n \rightarrow S^n\;.$$

\subsection{Example: The symmetric degenerative iterated tangent functor}\label{exa:symm_deg_it_tang}

As a special case of \ref{sec:symm_cod_obj_det_symm_sgrp}, we saw in \ref{exa:deg_obj_it_tgt} that the tangent functor $T:\X \rightarrow \X$ on a tangent category $(\X,\T)$ carries the structure of a symmetric semigroup $(T,\ell,c)$ in $[\X,\X]^\op$ and so determines a symmetric codegenerative object $T^\sharp:\finCard_s \rightarrow [\X,\X]^\op$, or equivalently, a symmetric degenerative object
\begin{equation}\label{eq:symm_deg_it_tang}T^{(-)}:\finCard_s^\op \rightarrow [\X,\X],\;\;\;\;n \mapsto T^n\;.\end{equation}
By \ref{sec:symm_cod_obj_det_symm_sgrp}, the associated degeneracy and symmetry morphisms are the natural transformations
\begin{equation}\label{eq:tng_func_deg}\ell^n_i \;=\; T^{i - 1}\ell T^{n - i}\;\;:\;\;T^n \rightarrow T^{n+1}\;\;\;\;\;\;(n,i \in \N, 1 \leqslant i \leqslant n)\end{equation}
\begin{equation}\label{eq:tng_func_symm}c^n_i \;=\; T^{i - 1} c T^{n - i - 1}\;\;:\;\;T^n \rightarrow T^n\;\;\;\;\;\;(n,i \in \N, 1 \leqslant i \leqslant n - 1).\end{equation}

\subsection{Example: Codegenerative sets induced by tangent structure}\label{exa:cod_sets_ind_tngt_str}

Let $(\X,\T)$ be a tangent category.  By transposition, the functor $T^{(-)}$ of \eqref{eq:symm_deg_it_tang} determines a functor
\begin{equation}\label{eq:func_symm_cod_objs_in_tng_cat}\X \rightarrow [\finCard_s^\op,\X],\;\;\;\;M \mapsto T^{(-)}M\end{equation}
valued in the category of symmetric degenerative objects in $\X$.  Explicitly, this functor sends each object $M$ of $\X$ to a symmetric degenerative object
$$T^{(-)}M:\finCard_s^\op \rightarrow \X,\;\;\;\;n \mapsto T^nM$$
whose underlying graded object consists of the total spaces $T^nM$ of the iterated tangent bundles of $M$.  The degeneracy and symmetry morphisms carried by $T^{(-)}M$ are just the components $\ell^n_i M$, $c^n_i M$ at $M$ of those carried by $T^{(-)}$ (\ref{eq:tng_func_deg}, \ref{eq:tng_func_symm}).  

Fixing an object $E$ of $\X$, the functor \eqref{eq:func_symm_cod_objs_in_tng_cat} induces\footnote{Explicitly, \eqref{eq:func_symm_cod_objs_in_tng_cat} determines a functor $\X^\op \rightarrow [\finCard_s,\X^\op]$ which we can then compose with the functor $[\finCard_s,\X(-,E)]:[\finCard_s,\X^\op] \rightarrow [\finCard_s,\set]$ given by composition with the hom-functor $\X(-,E):\X^\op \rightarrow \set$.} a functor
\begin{equation}\label{eq:func_tng_cat_to_symm_cod_sets}\X^\op \rightarrow [\finCard_s,\set]\;,\;\;\;\;M \mapsto \X(T^{(-)}M,E)\end{equation}
valued in the category of symmetric codegenerative sets.  Explicitly, this functor sends each object $M$ of $\X$ to a symmetric codegenerative set $\X(T^{(-)}M,E)$ whose underlying graded set consists of the hom-sets $\X(T^nM,E)$.  The associated codegeneracies and symmetries are the mappings
$$\varepsilon^n_i = \X(\ell^n_i M,E)\;\;:\;\;\X(T^{n+1}M,E) \rightarrow \X(T^nM,E),\;\;\;\;\omega \mapsto \ell^n_iM \s \omega$$
$$\sigma^n_i = \X(c^n_i M,E)\;\;:\;\;\X(T^nM,E) \rightarrow \X(T^nM,E),\;\;\;\;\omega \mapsto c^n_i M \s \omega$$
given by precomposing with the degeneracies $\ell^n_i M:T^n M \rightarrow T^{n+1}M$ and the symmetries $c^n_i M:T^n M \rightarrow T^n M$.  

Note that if $\X$ is a Cartesian tangent catgory and $E$ carries the structure of a commutative monoid (resp. abelian group) object in $\X$, then the representable presheaf $\X(-,E):\X^\op \rightarrow \set$ lifts to a presheaf valued in the category $\cmon$ of commutative monoids (resp. the category $\ab$ of abelian groups).  Hence the functor \eqref{eq:func_tng_cat_to_symm_cod_sets} lifts to a functor
$$\X^\op \rightarrow [\finCard_s,\cmon]\;,\;\;\;\;M \mapsto \X(T^{(-)}M,E)$$
valued in the category of symmetric codegenerative objects in $\cmon$ (resp. $\ab$).


\section{Presenting symmetric cosimplicial objects by fundamental cofaces}\label{sec:presFundamentalCoface}

The coface morphisms $\delta^n_i:C_n \rightarrow C_{n + 1}$ carried by a symmetric cosimplicial object $C$ can be expressed in terms of the \textbf{fundamental cofaces} $\delta^n_1$ by repeated application of the equation $\delta_{i+1} = \delta_i\s\sigma_i$ of \eqref{eq:cofsymm_rels}.  This leads to the following new succinct equational presentation of symmetric cosimplicial objects, which will be useful in establishing the symmetric cosimplicial structure that engenders the de Rham complex:

\begin{theorem}\label{thm:fund_cof_pres_symm_cosimpl_obj}
A symmetric cosimplicial object $C:\finCard \rightarrow \C$ in a category $\C$ is equivalently given by a graded object $C$ in $\C$ equipped with morphisms $\varepsilon^n_i$, $\sigma^n_i$ as in \eqref{eq:grob_cod}, \eqref{eq:grob_symm} together with a sequence of morphisms
\begin{equation}\label{eq:fund_cof}\delta^n_1:C_n \rightarrow C_{n+1}\;\;\;\;\;\;(n \in \N)\end{equation}
such that the equations \eqref{eq:codegen_rels}, \eqref{eq:moore_rels}, \eqref{eq:codsymm_rels} are satisfied along with the following further equations:
\begin{equation}\label{eq:fund_cofcod_eqs}\delta_1\s\varepsilon_1 = 1\;\;\;\;\;\;\;\;\delta_1\s\varepsilon_{j+1} = \varepsilon_j\s\delta_1\end{equation}
\begin{equation}\label{eq:fund_cofsymm_eqs}\delta_1\s\delta_1\s\sigma_1 = \delta_1\s\delta_1\;\;\;\;\;\;\;\;\delta_1\s\sigma_{i + 1} = \sigma_i\s\delta_1\;.\end{equation}
Therefore, in view of \ref{thm:pres_cosimpl_obs_gens_rels}(iii), a symmetric cosimplicial object $C$ in $\C$ is equivalently given by a symmetric codegenerative object $C$ equipped with a sequence of morphisms \eqref{eq:fund_cof} satisfying the equations \eqref{eq:fund_cofcod_eqs}, \eqref{eq:fund_cofsymm_eqs}.
\end{theorem}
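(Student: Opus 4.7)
The plan is to leverage the presentation of $\finCard$ from Theorem \ref{thm:fincard_gens_rels}, which by \ref{thm:pres_cosimpl_obs_gens_rels}(ii) makes a symmetric cosimplicial object on $C$ equivalent to a graded object equipped with the full family of codegeneracies $\varepsilon^n_i$, cofaces $\delta^n_i$, and symmetries $\sigma^n_i$ satisfying the relations \eqref{eq:codegen_rels}--\eqref{eq:cofsymm_rels}. The forward direction of the claimed equivalence is immediate: specializing \eqref{eq:cofcod_rels} at $i = 1$ and \eqref{eq:cofsymm_rels} at $j = 1$ yields the reduced equations \eqref{eq:fund_cofcod_eqs} and the second equation of \eqref{eq:fund_cofsymm_eqs}, while for the absorption equation $\delta_1 \s \delta_1 \s \sigma_1 = \delta_1 \s \delta_1$ one uses the middle case of \eqref{eq:cofsymm_rels} to rewrite $\delta_1 \s \sigma_1$ as $\delta_2$ and then applies \eqref{eq:coface_rels} at $i = j = 1$.

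For the backward direction, suppose $(C, \varepsilon^n_i, \sigma^n_i, \delta^n_1)$ satisfies the reduced relations. I define the remaining cofaces inductively by $\delta^n_{i+1} := \delta^n_i \s \sigma^{n+1}_i$, so that explicitly
$$\delta^n_i = \delta^n_1 \s \sigma^{n+1}_1 \s \sigma^{n+1}_2 \s \cdots \s \sigma^{n+1}_{i-1}\;.$$
I then verify that this extended data satisfies the full relations \eqref{eq:coface_rels}, \eqref{eq:cofcod_rels}, and \eqref{eq:cofsymm_rels}. The two constructions are mutually inverse, since restriction to the fundamental cofaces and then re-extension recovers the original data by definition.

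The verifications proceed in order of increasing difficulty. For \eqref{eq:cofsymm_rels}, the middle case $\delta_i \s \sigma_i = \delta_{i+1}$ is the definition, while the other two cases reduce to the assumed equation $\delta_1 \s \sigma_{k+1} = \sigma_k \s \delta_1$ via the Moore relations \eqref{eq:moore_rels}, by moving $\sigma_i$ past the string $\sigma_1 \s \cdots \s \sigma_{j-1}$ in the expansion of $\delta_j$ (pure commutation when $i > j$, and a combination of braid and commutation moves, giving the symmetric-group identity $\sigma_1 \s \cdots \s \sigma_{j-1} \s \sigma_i = \sigma_{i+1} \s \sigma_1 \s \cdots \s \sigma_{j-1}$, when $i < j - 1$). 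The coface-codegeneracy relations \eqref{eq:cofcod_rels} then follow by an analogous strategy, reducing each case to \eqref{eq:fund_cofcod_eqs} using \eqref{eq:codsymm_rels} together with the cases of \eqref{eq:cofsymm_rels} just established.

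The main obstacle is the pure coface relation \eqref{eq:coface_rels}: $\delta_j \s \delta_i = \delta_i \s \delta_{j+1}$ for $i \leqslant j$. Expanding both sides and repeatedly using $\sigma_k \s \delta_1 = \delta_1 \s \sigma_{k+1}$ to shift the inner $\delta_1$ leftward, both composites assume the form $\delta_1 \s \delta_1 \s \xi$ for explicit words $\xi$ in the symmetries. The absorption equation $\delta_1 \s \delta_1 \s \sigma_1 = \delta_1 \s \delta_1$ renders $\delta_1 \s \delta_1$ invariant under postcomposition by $\sigma_1$, so the required equality reduces to a purely combinatorial identity in the symmetric group—that the two resulting words in symmetries agree modulo left-multiplication by $\sigma_1$—which is established by a bounded application of the braid and commutation relations in \eqref{eq:moore_rels}.
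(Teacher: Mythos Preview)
Your proposal is correct and follows essentially the same route as the paper: define $\delta^n_i := \delta^n_1 \s \sigma_1 \s \cdots \s \sigma_{i-1}$ and then verify the relations \eqref{eq:coface_rels}, \eqref{eq:cofcod_rels}, \eqref{eq:cofsymm_rels} (the paper packages this as Lemma \ref{thm:main_fund_coface_lemma} and then derives the presentation of $\finCard$ in Theorem \ref{thm:fund_cof_pres_fincard}). The only methodological difference is that where you reduce the harder identities (e.g.\ the pure coface relation and the case $i > j+1$ of \eqref{eq:cofcod_rels}) to word problems solved via the Moore and codegeneracy--symmetry relations, the paper instead exploits that the $\sigma$'s and $\varepsilon$'s already assemble into a functor $\finCard_s \to \C$ and checks the needed equalities by direct evaluation of the corresponding maps in $\finCard_s$; both approaches are valid, and the paper's shortcut avoids some of the bookkeeping you gesture at.
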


Before proving this, let us adopt the following notational conventions.

\begin{notation}\label{def:notn_perms}
\emptybox
\begin{enumerate}[{\rm (i)}]
\item By abuse of notation\footnote{rather than a breach of the well-foundedness axiom of set theory} we write the elements of each finite ordinal $n \in \N$ in ascending order as $1,2,3,...,n$ (rather than $0,1,...,n-1$).
\item For each $n \in \N$ and each $i \in n$, let $\sigma^n_{(i)}$ denote the permutation
$$\sigma^n_{(i)} = \bigl(i (i-1) (i-2) ... 3 2 1\bigr) \in S_n$$
written in cycle notation (i.e., $i \mapsto i-1$, $i-1 \mapsto i-2$, etc.) on the elements $1,...,n$ of the ordinal $n$.  We sometimes omit the superscript $n$, writing just $\sigma_{(i)}$.
\end{enumerate}
Observe that $\sigma_{(i)}$ can be written as a composite
$$\sigma_{(i)} = \sigma_1\s\sigma_2 \s ... \s \sigma_{i-1}$$
of the transpositions $\sigma_j = (j(j+1))$ defined in \ref{def:gens}.  When $i = 1$ we interpret the resulting empty composite as the identity map on $n$, so that $\sigma_{(1)} = 1$.
\end{notation}

\begin{lemma}\label{thm:main_fund_coface_lemma}
Let $C:\finCard_s \rightarrow \C$ be a symmetric codegenerative object (\ref{def:varns_on_simpl_obj}) equipped with a sequence of morphisms $\delta^n_1:C_n \rightarrow C_{n+1}$ $(n \in \N)$ satisfying the equations \eqref{eq:fund_cofcod_eqs}, \eqref{eq:fund_cofsymm_eqs}.  Then $C$ extends uniquely to a symmetric cosimplicial object $C':\finCard \rightarrow \C$ whose fundamental cofaces are the given morphisms $\delta^n_1$.  Explicitly, the cofaces of $C'$ can be expressed in terms of the fundamental cofaces as
\begin{equation}\label{eq:cof_via_fund_cof}\delta^n_i = \delta^n_1 \s \sigma^{n+1}_{(i)}\;\;\;\;\;\;(n \in \N, 1 \leqslant i \leqslant n + 1)\end{equation}
where we write $\sigma^{n+1}_{(i)}:C_{n+1} \rightarrow C_{n+1}$ to denote the image of the automorphism $\sigma^{n+1}_{(i)}:n+1 \rightarrow n+1$ (\ref{def:notn_perms}) under the functor $C:\finCard_s \rightarrow \C$.
\end{lemma}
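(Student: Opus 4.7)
The plan is to handle uniqueness and existence separately, leveraging the presentation of $\finCard$ from Theorem \ref{thm:fincard_gens_rels}. Uniqueness is immediate: any extension $C'$ must satisfy $\delta^n_{i+1} = \delta^n_i \s \sigma^{n+1}_i$ (part of \eqref{eq:cofsymm_rels}), and iterating this from $\delta^n_1$ gives $\delta^n_i = \delta^n_1 \s \sigma^{n+1}_1 \s \cdots \s \sigma^{n+1}_{i-1} = \delta^n_1 \s \sigma^{n+1}_{(i)}$, which is formula \eqref{eq:cof_via_fund_cof}. For existence, I would define $\delta^n_i := \delta^n_1 \s \sigma^{n+1}_{(i)}$ and verify all the relations from Theorem \ref{thm:fincard_gens_rels}. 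The relations not involving $\delta$ hold automatically since $C$ is already a functor on $\finCard_s$, so the work reduces to verifying \eqref{eq:coface_rels}, \eqref{eq:cofcod_rels}, and \eqref{eq:cofsymm_rels}.

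My uniform strategy is to rewrite each side of a target relation using $\delta_i = \delta_1 \s \sigma_{(i)}$, move symmetries past $\delta_1$ using the second equation of \eqref{eq:fund_cofsymm_eqs} (i.e.\ $\sigma_k \s \delta_1 = \delta_1 \s \sigma_{k+1}$), and reduce what remains to an identity living entirely in $\finCard_s$, which $C$ then preserves automatically. For \eqref{eq:cofsymm_rels} a short case analysis on $\sigma^{n+1}_{(j)} \s \sigma^{n+1}_i$ in $\finCard_s$ handles all four ranges of $i$ relative to $j$.

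For \eqref{eq:cofcod_rels}, the off-diagonal cases $i < j$ and $i > j + 1$ follow from the $\finCard_s$-identities $\sigma^{n+1}_{(i)} \s \varepsilon^n_j = \varepsilon^n_j \s \sigma^n_{(i)}$ and $\sigma^{n+1}_{(i)} \s \varepsilon^n_j = \varepsilon^n_{j+1} \s \sigma^n_{(i-1)}$, combined with the given $\delta_1 \s \varepsilon_{j+1} = \varepsilon_j \s \delta_1$. The diagonal case $\delta^n_i \s \varepsilon^n_i = 1_{C_n}$ I would prove by induction on $i$ using the $\finCard_s$-identity $\sigma_{(i)} \s \varepsilon_i = \sigma_i \s \sigma_{(i-1)} \s \varepsilon_{i-1} \s \sigma_{i-1}$ (obtained from \eqref{eq:codsymm_rels} and \eqref{eq:moore_rels}) together with $\delta_1 \s \sigma_i = \sigma_{i-1} \s \delta_1$, which reduces the inductive step to $\delta_{i-1} \s \varepsilon_{i-1}$. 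The adjacent case $i = j + 1$ then follows at once from $\delta_{j+1} = \delta_j \s \sigma_j$ and $\sigma_j \s \varepsilon_j = \varepsilon_j$.

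The main obstacle is the pure coface relation \eqref{eq:coface_rels}. Iterating $\sigma_k \s \delta_1 = \delta_1 \s \sigma_{k+1}$ first yields $\sigma^{n+1}_{(j)} \s \delta^{n+1}_1 = \delta^{n+1}_1 \s \sigma^{n+2}_{[2,j]}$, where $\sigma_{[2,j]} := \sigma_2 \s \cdots \s \sigma_j$ (the empty product for $j = 1$). Both sides of $\delta^n_j \s \delta^{n+1}_i = \delta^n_i \s \delta^{n+1}_{j+1}$ then take the form $\delta^n_1 \s \delta^{n+1}_1 \s \rho$ for explicit permutations $\rho \in S_{n+2}$, and a direct computation (each side coincides with $\sigma_{(i+1)} \s \sigma_{(j+1)}$ up to pre-multiplication by $\sigma_1$) gives the $\finCard_s$-identity
$$\sigma^{n+2}_{[2,i]} \s \sigma^{n+2}_{(j+1)} \;=\; \sigma^{n+2}_1 \s \sigma^{n+2}_{[2,j]} \s \sigma^{n+2}_{(i)} \;\;\;\;(i \leqslant j).$$
The residual factor of $\sigma_1$ is absorbed by the remaining given relation $\delta_1 \s \delta_1 \s \sigma_1 = \delta_1 \s \delta_1$ from \eqref{eq:fund_cofsymm_eqs}, completing the verification.
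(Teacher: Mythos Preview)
Your proposal is correct and follows essentially the same approach as the paper's proof: define the cofaces by the formula \eqref{eq:cof_via_fund_cof}, then verify \eqref{eq:coface_rels}, \eqref{eq:cofcod_rels}, \eqref{eq:cofsymm_rels} by pushing symmetries past $\delta_1$ via \eqref{eq:fund_cofsymm_eqs} and reducing to identities in $\finCard_s$, handling the diagonal of \eqref{eq:cofcod_rels} by induction and absorbing the residual $\sigma_1$ in \eqref{eq:coface_rels} with $\delta_1\s\delta_1\s\sigma_1 = \delta_1\s\delta_1$. The only differences are cosmetic (order of cases, index shifts in the inductive step, and the way the key permutation identity for \eqref{eq:coface_rels} is phrased).
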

\begin{proof}
In view of \ref{thm:fincard_gens_rels}, repeated application of the equation $\delta^n_{i+1} = \delta^n_i\s\sigma^{n+1}_i$ of \eqref{eq:cofsymm_rels} shows that the maps $\delta^n_1,\varepsilon^n_i,\sigma^n_i$ generate $\finCard$.  Hence, in view of \ref{thm:fincards_gens_rels}, the uniqueness of $C'$ is immediate if $C'$ exists.  Defining the coface morphisms $\delta^n_i$ for $C'$ by way of the equation \eqref{eq:cof_via_fund_cof}, it suffices (by \ref{thm:pres_cosimpl_obs_gens_rels}) to show that these satisfy the relations \eqref{eq:coface_rels}, \eqref{eq:cofcod_rels}, \eqref{eq:cofsymm_rels} when taken together with the morphisms $\varepsilon^n_i,\sigma^n_i$ carried by $C$.

In order to verify the first coface-codegeneracy relation $\delta_i\s\varepsilon_j = \varepsilon_{j-1}\s\delta_i$ $(i < j)$, we compute as follows, applying the second equation in \eqref{eq:fund_cofcod_eqs} and then repeatedly applying the first equation in \eqref{eq:codsymm_rels}:
$$
\begin{array}{lllll}
\varepsilon_{j-1}\s\delta_i & = & \varepsilon_{j-1}\s\delta_1\s\sigma_1\s...\s\sigma_{i-1} & = & \delta_1\s\varepsilon_j\s\sigma_1\s...\s\sigma_{i-1}\\
                            & = & \delta_1\s\sigma_1\s...\s\sigma_{i-1}\s\varepsilon_j & = & \delta_i\s\varepsilon_j\;.
\end{array}
$$

Next, we prove the second coface-codegeneracy relation $\delta^n_i\s\varepsilon^n_i = 1$ by induction on $i$ (with $n$ fixed).  In the base case where $i = 1$, this holds by assumption.  For the inductive step, suppose that $\delta^n_i\s\varepsilon^n_i = 1$ holds for a given index $i$.  Then by applying one of the codegeneracy-symmetry relations \eqref{eq:codsymm_rels} and the fact that $(\sigma_{i+1})^{-1} = \sigma_{i+1}$ we compute that
$$
\begin{array}{lllll}
\delta_{i+1}\s\varepsilon_{i+1} & = &\delta_1\s\sigma_1\s...\s\sigma_{i-1}\s\sigma_i\s\varepsilon_{i+1} & & \\
                                & = & \delta_1\s\sigma_{(i)}\s\sigma_i\s\varepsilon_{i+1} & = & \delta_1\s\sigma_{(i)}\s\sigma_{i+1}\s\varepsilon_i\s\sigma_i\;.\\
\end{array}
$$
in $\C$.  But $\sigma_{(i)}\s\sigma_{i+1} = \sigma_{i+1}\s\sigma_{(i)}$ in $\C$ since the the permutations $\sigma_{(i)} = (i(i-1)...321)$ and $\sigma_{i+1} = ((i+1)(i+2))$ are disjoint cycles.  Hence we compute as follows, applying the inductive hypothesis and one of the relations in \eqref{eq:fund_cofsymm_eqs}:
$$
\begin{array}{lllllll}
\delta_{i+1}\s\varepsilon_{i+1} & = &\delta_1\s\sigma_{i+1}\s\sigma_{(i)}\s\varepsilon_i\s\sigma_i & = & \sigma_i\s\delta_1\s\sigma_{(i)}\s\varepsilon_i\s\sigma_i & &\\
                                & = & \sigma_i\s\delta_i\s\varepsilon_i\s\sigma_i & = & \sigma_i\s\sigma_i & = & 1\;.\\
\end{array}
$$

The third coface-codegeneracy relation $\delta_{j+1}\s\varepsilon_j = 1$ now follows, using the last co\-de\-ge\-ne\-ra\-cy-symmetry relation \eqref{eq:codsymm_rels}:  $\delta_{j+1}\s\varepsilon_j = \delta_1\s\sigma_{(j+1)}\s \varepsilon_j = \delta_1\s \sigma_{(j)}\s\sigma_j\s\varepsilon_j = \delta_1 \s\sigma_{(j)}\s \varepsilon_j = \delta_j \s \varepsilon_j = 1$.

We next prove the last coface-codegeneracy relation $\delta^{n+1}_i\s\varepsilon^{n+1}_j = \varepsilon^n_j\s\delta^n_{i-1}$ $(i > j+1)$.  By definition, the left-hand side is $\delta_1\s\sigma_{(i)}\s\varepsilon_j:C_{n+1} \rightarrow C_{n+1}$, whereas by applying the second equation in \eqref{eq:fund_cofcod_eqs} we can express the right-hand side as $\varepsilon_j\s\delta_1\s\sigma_{(i-1)} = \delta_1\s\varepsilon_{j+1}\s\sigma_{(i-1)}$.  Hence it suffices to show that
$$\sigma_{(i)}\s\varepsilon_j = \varepsilon_{j+1}\s\sigma_{(i-1)}\;\;\;:\;\;\;n+2 \rightarrow n+1$$
in $\finCard$, but it is straightforward to verify that the left- and right-hand sides of this equation both denote the map $\phi$ given by
$$
\phi(x) = \begin{cases}
  i-1 & (x = 1)\\
  x-1 & (1 < x \leqslant j+1)\\
  x-2 & (j+1 < x \leqslant i)\\
  x-1 & (i < x).
\end{cases}
$$

Next we verify the first coface-symmetry relation $\delta_j^n\s\sigma^{n+1}_i = \sigma^n_i\s\delta^n_j$ $(i < j-1)$.  By definition, the left-hand side is $\delta_1\s\sigma_{(j)}\s\sigma_i$, whereas by applying the second equation in \eqref{eq:fund_cofsymm_eqs} we can express the right-hand side as $\sigma_i\s\delta_1\s\sigma_{(j)} = \delta_1\s\sigma_{i+1}\s\sigma_{(j)}$.  Hence it suffices to show that
$$\sigma_{(j)}\s\sigma_i = \sigma_{i+1}\s\sigma_{(j)}\;\;\;:\;\;\;n+1 \rightarrow n+1$$
in $\finCard$, but it is straightforward to verify that the left- and right-hand sides of this equation both denote the map $\phi$ given by
$$
\phi(x) = \begin{cases}
j & (x = 1)\\
i+1 & (x = i+1)\\
i & (x = i+2)\\
x-1 & (1 < x \leqslant j\;\text{and}\;x \not\in \{i+1,i+2\})\\
x   & (x > j)
\end{cases}
$$

The second coface-symmetry relation $\delta_i\s\sigma_i = \delta_{i+1}$ is almost immediate from the way that we have defined the cofaces, since $\delta_i\s\sigma_i = \delta_1\s\sigma_{(i)}\s\sigma_i = \delta_1\s\sigma_{(i+1)} = \delta_{i+1}$.

Next we establish the third coface-symmetry relation $\delta^n_j\s\sigma^{n+1}_i = \sigma^n_{i-1}\s\delta^n_j$ $(i > j)$.  By definition $\delta_j\s\sigma_i = \delta_1\s\sigma_{(j)}\s\sigma_i$ in $\C$, but since $i > j$ the permutations $\sigma_{(j)} = (j(j-1)...321)$ and $\sigma_i = (i(i+1))$ are disjoint cycles and hence commute.  Thus $\sigma_{(j)}\s\sigma_i = \sigma_i\s\sigma_{(j)}$ in $\C$.  Hence we can compute as follows, applying the second equation in \eqref{eq:fund_cofsymm_eqs} with the knowledge that $i > 1$ (since $i > j \geqslant 1$):
$$\delta_j\s\sigma_i = \delta_1\s\sigma_i\s\sigma_{(j)} = \sigma_{i-1}\s\delta_1\s\sigma_{(j)} = \sigma_{i-1}\s\delta_j\;.$$

Finally, let us verify the pure coface relations $\delta^n_j\s\delta^{n+1}_i = \delta^n_i \s\delta^{n+1}_{j+1}$, where $i \leqslant j$.  By repeatedly applying the equations in \eqref{eq:fund_cofsymm_eqs} we deduce that
$$
\begin{array}{lllll}
\delta_j\s\delta_i & = & \delta_1\s\sigma_1\s ... \s\sigma_{j - 1}\s\delta_1\s\sigma_1\s ...\s\sigma_{i-1} & & \\
& = & \delta_1\s\delta_1\s\sigma_2 \s ... \s \sigma_j\s\sigma_1 \s ...\s \sigma_{i-1} & = & \delta_1\s\delta_1\s\sigma_2\s...\s\sigma_j\s\sigma_{(i)}\\
\delta_i\s\delta_{j+1} & = & \delta_1\s\sigma_1\s...\s\sigma_{i-1}\s\delta_1\s\sigma_1\s...\s\sigma_j & = & \delta_1\s\delta_1\s\sigma_2\s...\s\sigma_i\s\sigma_1\s...\s\sigma_j\\
& = & \delta_1\s\delta_1\s\sigma_1\s\sigma_2\s...\s\sigma_i\s\sigma_1\s...\s\sigma_j & = & \delta_1\s\delta_1\s\sigma_{(i+1)}\s\sigma_{(j+1)}
\end{array}
$$
in $\C$, so it suffices to show that 
\begin{equation}\label{eq:pure_cod_arg}\sigma_2\s...\s\sigma_j\s\sigma_{(i)} = \sigma_{(i+1)}\s\sigma_{(j+1)}\;\;\;:\;\;\;n+2 \rightarrow n+2\end{equation}
in $\finCard$.  Using the fact that $\sigma_2\s...\s\sigma_j = ((j+1)j...32)$ in cycle notation on the elements $1,2,...n+2$ of the set $n+2$, it is straightforward to verify that the left- and right-hand sides of \eqref{eq:pure_cod_arg} both denote the map $\phi$ given by
$$\phi(x) = \begin{cases}
               i   & (x = 1)\\
               j+1 & (x = 2)\\
               x-2 & (3 \leqslant x \leqslant i + 1)\\
               x-1 & (i+1 < x \leqslant j+1)\\
               x   & (x > j+1).
\end{cases}$$

\end{proof}

Using the preceding lemma, we now establish the following equational presentation of the category of finite cardinals, from which Theorem \ref{thm:fund_cof_pres_symm_cosimpl_obj} then immediately follows:

\begin{theorem}\label{thm:fund_cof_pres_fincard}
The category $\finCard$ of finite cardinals and arbitrary maps can be presented by generators and relations as follows:
\begin{enumerate}[{\rm (i)}]
\item Generators: The codegeneracy and symmetry maps $\varepsilon^n_i, \sigma^n_i$ of \ref{def:gens} together with the fundamental coface maps $\delta^n_1$ $(n \in \N)$ of \ref{def:gens}.
\item Relations: \eqref{eq:codegen_rels}, \eqref{eq:moore_rels}, \eqref{eq:codsymm_rels}, \eqref{eq:fund_cofcod_eqs}, \eqref{eq:fund_cofsymm_eqs}.
\end{enumerate}
\end{theorem}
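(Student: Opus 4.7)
The plan is to exhibit mutually inverse identity-on-objects functors between $\finCard$ and the category $\D$ presented by the listed generators and relations, invoking universal properties at each stage. The preceding Lemma \ref{thm:main_fund_coface_lemma} does the real combinatorial work; here the task is mostly bookkeeping.

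First I would construct a functor $q:\D \rightarrow \finCard$ via the universal property of the presentation of $\D$. It suffices to verify that the morphisms $\varepsilon^n_i, \sigma^n_i, \delta^n_1$ in $\finCard$ satisfy the stated relations. The equations \eqref{eq:codegen_rels}, \eqref{eq:moore_rels}, \eqref{eq:codsymm_rels} hold in $\finCard$ by Theorem \ref{thm:fincard_gens_rels}. The two equations in \eqref{eq:fund_cofcod_eqs} are the $i=j=1$ and the $i=1$, $j \geqslant 2$ instances of the coface-codegeneracy relations \eqref{eq:cofcod_rels}. The second equation of \eqref{eq:fund_cofsymm_eqs} is the $j=1$, $i \geqslant 2$ instance of the third coface-symmetry relation \eqref{eq:cofsymm_rels}. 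For the first equation of \eqref{eq:fund_cofsymm_eqs}, combine the pure coface relation $\delta_1 \s \delta_1 = \delta_1 \s \delta_2$ (the $i=j=1$ case of \eqref{eq:coface_rels}) with the identity $\delta_2 = \delta_1 \s \sigma_1$ (the middle equation of \eqref{eq:cofsymm_rels}).

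Next I would construct the reverse functor $r:\finCard \rightarrow \D$ by applying Lemma \ref{thm:main_fund_coface_lemma} with $\C := \D$. Theorem \ref{thm:fincards_gens_rels}, applied to the $\varepsilon^n_i, \sigma^n_i$ generators of $\D$ together with the relations \eqref{eq:codegen_rels}, \eqref{eq:moore_rels}, \eqref{eq:codsymm_rels} (which hold by construction of $\D$), yields an identity-on-objects functor $\finCard_s \rightarrow \D$, i.e. a symmetric codegenerative object in $\D$. The generators $\delta^n_1$ of $\D$ tautologically satisfy the relations \eqref{eq:fund_cofcod_eqs} and \eqref{eq:fund_cofsymm_eqs} required by the lemma, which therefore produces the desired extension $r:\finCard \rightarrow \D$ whose fundamental cofaces are the $\delta^n_1$.

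Finally I would verify that $q$ and $r$ are mutually inverse. The composite $r \s q:\D \rightarrow \D$ sends each generator $\varepsilon^n_i, \sigma^n_i, \delta^n_1$ of $\D$ to itself, hence equals $\mathrm{id}_\D$ by the universal property of the presentation. For the composite $q \s r:\finCard \rightarrow \finCard$, formula \eqref{eq:cof_via_fund_cof} of the lemma gives $r(\delta^n_i) = \delta^n_1 \s \sigma^{n+1}_{(i)}$ in $\D$, which maps under $q$ to $\delta^n_1 \s \sigma^{n+1}_{(i)}$ in $\finCard$; iterating the middle equation of \eqref{eq:cofsymm_rels} shows that this equals $\delta^n_i$ in $\finCard$. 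Since $q \s r$ also fixes each $\varepsilon^n_i$ and $\sigma^n_i$, it follows from Theorem \ref{thm:fincard_gens_rels} that $q \s r = \mathrm{id}_{\finCard}$. The only substantial obstacle in this argument lies in Lemma \ref{thm:main_fund_coface_lemma} itself, where the combinatorics of expressing all cofaces in terms of fundamental cofaces (and verifying the resulting coface-coface, coface-codegeneracy and coface-symmetry equations) is handled.
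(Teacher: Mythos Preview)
Your proof is correct and follows essentially the same strategy as the paper's: verify that the listed relations hold in $\finCard$ (using the Barr--Grandis presentation), and then invoke Lemma \ref{thm:main_fund_coface_lemma} to obtain the required universal property, which you unpack as an explicit pair of inverse functors. One small slip: with the paper's diagrammatic convention $f \s g$ (first $f$, then $g$), the composite $\D \rightarrow \finCard \rightarrow \D$ should be written $q \s r$ rather than $r \s q$, and vice versa.
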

\begin{proof}
By \ref{thm:fincard_gens_rels}, the morphisms $\varepsilon^n_i,\sigma^n_i$ in $\finCard$ satisfy the relations \eqref{eq:codegen_rels}, \eqref{eq:moore_rels}, \eqref{eq:codsymm_rels}, and it is straightforward to verify that the morphisms $\delta^n_1,\varepsilon^n_i,\sigma^n_i$ in $\finCard$ also satisfy the relations \eqref{eq:fund_cofcod_eqs}, \eqref{eq:fund_cofsymm_eqs}.  Indeed, it is easy to check that these relations follow from the coface-codegeneracy relations \eqref{eq:cofcod_rels}, the coface-symmetry relations \eqref{eq:cofsymm_rels}, and the pure coface relations \eqref{eq:coface_rels}, all of which hold in $\finCard$ (\ref{thm:fincard_gens_rels}).

Hence it suffices to show that $\finCard$ has the relevant universal property \cite[\S 8]{MacL}.  But in view of \ref{thm:pres_cosimpl_obs_gens_rels}(iii) this universal property is equivalent to the extension property established in Lemma \ref{thm:main_fund_coface_lemma}.
\end{proof}


\section{The symmetric cosimplicial set of sector forms}\label{sec:cosimpOfSectorForms}

Let $E$ be a differential object in a Cartesian tangent category $(\X,\T)$, and let $M$ be an object of $\X$.  Recall that $\Psi_n(M)$ $(n \in \N)$ denotes the set of all sector $n$-forms on $M$ with values in $E$.  In the present section we show that the graded set $(\Psi_n(M))_{n \in \N}$ carries the structure of a symmetric cosimplicial commutative monoid.

\subsection{Remarks on the definition of sector form}\label{sec:rms_def_sector_form}
Recall from Definition \ref{def:sector_nform} that a sector $n$-form on $M$ is a morphism $\omega:T^n M \rightarrow E$ such that for each $j \in \{1,...,n\}$ the equation $a^n_j M\s T\omega = \omega \s \lambda$ holds, where $\lambda:E \rightarrow TE$ is the lift morphism carried by $E$.  Using the results of the previous sections, we can get a better understanding of this equation.  In particular, $a^n_j$ is the composite transformation
$$a^n_j = \left(T^n \overset{\ell_j}{\longrightarrow} T^{n+1} \overset{c_{(j)}}{\longrightarrow} T^{n+1}\right)$$
where $\ell_j = T^{j-1}\ell T^{n-j}$ is the degeneracy morphism carried by the symmetric degenerative object $T^{(-)}:\finCard_s^\op \rightarrow [\X,\X]$ (\ref{exa:symm_deg_it_tang}).  The morphism $c_{(j)}$ is the composite $c_{j-1}\s c_{j-2} \s...\s c_2 \s c_1$, where $c_i = T^{i-1}cT^{n - i}$ denotes the symmetry carried by $T^{(-)}$ (\ref{exa:symm_deg_it_tang}).  Equivalently, $c_{(j)}$ is obtained by applying the functor $T^{(-)}:\finCard_s^\op \rightarrow [\X,\X]$ to the permutation $\sigma^{n+1}_{(j)} = (j(j-1)...321):n+1 \rightarrow n+1$ (\ref{def:notn_perms}) . 

Hence if we define $\alpha^n_j$ as the composite morphism
$$\alpha^n_j := \left(n+1 \overset{\sigma^{n+1}_{(j)}}{\longrightarrow} n+1 \overset{\varepsilon^n_j}{\longrightarrow} n\right)$$
in $\finCard_s$, then
$$\begin{minipage}{3.5in}\textit{$a^n_j:T^n \rightarrow T^{n+1}$ is the image of $\alpha^n_j:n+1 \rightarrow n$ under the functor $T^{(-)}:\finCard_s^\op \rightarrow [\X,\X]$.}\end{minipage}$$
Concretely, one can readily verify that $\alpha^n_j$ is the mapping given by
$$
\alpha^n_j(x) = \begin{cases}
j   & (x = 1)\\
x-1 & (x \neq 1).\\
\end{cases}
$$

\begin{proposition}\label{thm:sector_nforms_submonoid}
For each $n \in \N$, the set $\Psi_n(M)$ of sector $n$-forms is a submonoid
$$\Psi_n(M) \hookrightarrow \X(T^n M, E)$$
of the commutative monoid of all morphisms $\omega:T^n M \rightarrow E$ in $\X$ (\ref{sec:cmons_in_tngnt_cat}).  If $E$ is subtractive, then $\Psi_n(M)$ is a subgroup of the abelian group $\X(T^nM,E)$.
\end{proposition}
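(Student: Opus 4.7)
The plan is to realize $\Psi_n(M)$ as the intersection of equalizers of pairs of commutative monoid homomorphisms out of the ambient monoid $\X(T^n M, E)$. Recall that $\X(T^n M, E)$ carries a commutative monoid structure induced by $(E,\mu,\eta)$, with $\omega_1 + \omega_2 = \langle\omega_1,\omega_2\rangle \s \mu$ and zero element $!_{T^n M}\s\eta$. Since $\X$ is a Cartesian tangent category, $T$ preserves finite products, so $(TE, T\mu, T\eta)$ is itself a commutative monoid in $\X$ and therefore $\X(T^n M, TE)$ likewise carries a commutative monoid structure.

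For each $i \in \{1,\dots,n\}$, I would consider the pair of maps
\[
L_i,\, R_i \;:\; \X(T^n M, E) \longrightarrow \X(T^n M, TE), \qquad L_i(\omega) \;=\; \omega \s \lambda, \qquad R_i(\omega) \;=\; a^n_i M \s T\omega.
\]
By Definition \ref{def:sector_nform}, $\Psi_n(M) = \bigcap_{i=1}^{n}\{\omega\mid L_i(\omega) = R_i(\omega)\}$. The next step is to verify that $L_i$ and $R_i$ are both commutative monoid homomorphisms. For $L_i$, this is immediate because $\lambda$ is by definition a homomorphism of commutative monoids from $(E,\mu,\eta)$ to $(TE,T\mu,T\eta)$, and post-composition with such a homomorphism always induces a monoid homomorphism on hom-sets. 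For $R_i$, the key point is that $T:\X(T^nM,E)\to\X(T^{n+1}M,TE)$ is a monoid homomorphism, since $T$ preserves finite products (so $T\langle\omega_1,\omega_2\rangle = \langle T\omega_1, T\omega_2\rangle$ under the canonical iso, and $T\mu$ is by construction the multiplication on $TE$); pre-composition with the fixed map $a^n_i M$ is then automatically a monoid homomorphism for the codomain-induced structure, so $R_i$ is a composite of two monoid homomorphisms.

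Once both maps are known to be monoid homomorphisms, each equalizer $\{\omega\mid L_i(\omega) = R_i(\omega)\}$ is a submonoid of $\X(T^n M, E)$, and an intersection of submonoids is again a submonoid; this yields the first assertion. In the subtractive case, $(E,\mu,\eta,-)$ is an abelian group object in $\X$; since $T$ preserves finite products it endows $TE$ with an abelian group structure, and $\lambda$ is automatically a group homomorphism (inverses in a group object are unique and preserved by any monoid hom). The same argument then shows that each $L_i$ and $R_i$ is a group homomorphism, so $\Psi_n(M)$ is a subgroup of $\X(T^n M, E)$. There is essentially no obstacle beyond packaging the linearity condition as an equalizer of monoid homomorphisms; once this observation is in place, the conclusion follows directly from Proposition \ref{prop:cmons_in_tngt_cat} and the differential object axioms.
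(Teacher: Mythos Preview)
Your proposal is correct and uses essentially the same ingredients as the paper's proof: the additivity of $T$ on hom-sets (Proposition~\ref{prop:cmons_in_tngt_cat}) and the fact that $\lambda$ is a monoid homomorphism. The paper verifies closure under $+$ and $0$ by a direct computation of $a_jM \s T(\omega+\tau)$, whereas you package the same computation as the observation that each linearity condition is an equalizer of two monoid homomorphisms $L_i,R_i$; this is a mild repackaging rather than a different argument.
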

\begin{proof}
Given $\omega,\tau \in \Psi_n(M)$, the sum $\omega + \tau$ in $\X(T^n M,E)$ is a sector $n$-form, since for each $j = 1,...,n$ we can compute as follows, using the fact that $\cmon(T):\cmon(\X) \rightarrow \cmon(\X)$ is an additive functor (\ref{prop:cmons_in_tngt_cat}) and the fact that $\lambda:E \rightarrow TE$ is a homomorphism of commutative monoids (\ref{def:diff_objs}):
$$
\begin{array}{lllll}
a_j M \s T(\omega + \tau) & = & a_j M \s (T(\omega) + T(\tau)) & = & (a_j M \s T(\omega)) + (a_j M \s T(\tau))\\
                          & = & (\omega \s \lambda) + (\tau \s \lambda) & = & (\omega + \tau) \s \lambda\;.
\end{array}
$$
Also, the zero element $0$ of the commutative monoid $\X(T^n M,E)$ is a sector $n$-form since we compute that $a_jM \s T(0) = a_jM \s 0 = 0 = 0 \s \lambda$, where each occurrence of $0$ denotes the zero element of the relevant hom-set, again using the additivity of $T$ and the fact that $\lambda$ is a monoid homomorphism.  The remaining claim is verified similarly.
\end{proof}

Recall that the graded commutative monoid $(\X(T^n M,E))_{n \in \N}$ carries the structure of a symmetric codegenerative commutative monoid (\ref{exa:cod_sets_ind_tngt_str}).  We now show that this structure restricts to sector forms:

\begin{proposition}\label{thm:cod_symm_sector_forms}
The codegeneracy and symmetry maps
$$\varepsilon^n_i = \X(\ell^n_i M,E)\;\;:\;\;\X(T^{n+1}M,E) \rightarrow \X(T^nM,E),\;\;\;\;\omega \mapsto \ell^n_iM \s \omega$$
$$\sigma^n_i = \X(c^n_i M,E)\;\;:\;\;\X(T^nM,E) \rightarrow \X(T^nM,E),\;\;\;\;\omega \mapsto c^n_i M \s \omega$$
carried by the symmetric codegenerative commutative monoid $\X(T^{(-)}M,E)$ (\ref{exa:cod_sets_ind_tngt_str}) restrict to yield homomorphisms
$$\varepsilon^n_i\;\;:\;\;\Psi_{n+1}(M) \rightarrow \Psi_n(M)\;\;\;\;\;\;\;\;\;\;\;\;\sigma^n_i\;\;:\;\;\Psi_n(M) \rightarrow \Psi_n(M)$$
between the commutative monoids of sector forms.
\end{proposition}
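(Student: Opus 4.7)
The plan is to verify the two submonoid inclusions behind the claim: that $\ell^n_i M \s \omega \in \Psi_n(M)$ whenever $\omega \in \Psi_{n+1}(M)$, and that $c^n_i M \s \omega \in \Psi_n(M)$ whenever $\omega \in \Psi_n(M)$. Once these are established, the homomorphism property is immediate, for precomposition with any fixed morphism is already a monoid homomorphism of the ambient hom-sets (as noted in \ref{exa:cod_sets_ind_tngt_str}), and by \ref{thm:sector_nforms_submonoid} the $\Psi_k(M)$ are submonoids of $\X(T^kM,E)$, so the restricted maps inherit the homomorphism property.

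For the codegeneracy case, the key identity is $T\ell^n_i = \ell^{n+1}_{i+1}$ as natural transformations, so that $T(\ell^n_i M \s \omega) = \ell^{n+1}_{i+1} M \s T\omega$. The sector form condition for $\ell^n_i M \s \omega$ at an index $k \in \{1,\dots,n\}$ then demands that $a^n_k M \s \ell^{n+1}_{i+1} M \s T\omega = \ell^n_i M \s \omega \s \lambda$; using the sector form property of $\omega$ at some $l \in \{1,\dots,n+1\}$ to rewrite the right-hand side as $\ell^n_i M \s a^{n+1}_l M \s T\omega$, it suffices to exhibit, for each $k$, an index $l$ making the natural transformation identity $a^n_k \s \ell^{n+1}_{i+1} = \ell^n_i \s a^{n+1}_l : T^n \to T^{n+2}$ hold. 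By \ref{sec:rms_def_sector_form} and the contravariance of $T^{(-)} : \finCard_s^\op \to [\X,\X]$ from \ref{exa:symm_deg_it_tang}, this reduces to the identity $\varepsilon^{n+1}_{i+1} \s \alpha^n_k = \alpha^{n+1}_l \s \varepsilon^n_i$ of maps $n+2 \to n$ in $\finCard_s$. Elementwise comparison shows the two maps agree on $x \geq 2$, while at $x = 1$ the left-hand side yields $k$ and the right-hand side yields $\varepsilon^n_i(l)$, so we need only solve $\varepsilon^n_i(l) = k$, which is done by taking $l = k$ if $k \leq i$ and $l = k+1$ if $k > i$.

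The symmetry case proceeds in the same way. We have $Tc^n_i = c^{n+1}_{i+1}$, giving $T(c^n_i M \s \omega) = c^{n+1}_{i+1} M \s T\omega$, and the sector form condition at index $k$ reduces to finding $j \in \{1,\dots,n\}$ making $a^n_k \s c^{n+1}_{i+1} = c^n_i \s a^n_j : T^n \to T^{n+1}$ hold. Translating via $T^{(-)}$, this becomes $\sigma^{n+1}_{i+1} \s \alpha^n_k = \alpha^n_j \s \sigma^n_i$ in $\finCard_s$. Again the two maps agree for $x \geq 2$, and the equation at $x = 1$ becomes $k = \sigma^n_i(j)$, solved by $j = \sigma^n_i(k)$, which lies in $\{1,\dots,n\}$ since $1 \leq i \leq n-1$.

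The only real obstacle is the careful bookkeeping of the contravariance of $T^{(-)}$ (so that composites of natural transformations in $[\X,\X]$ correspond to reversed composites in $\finCard_s$) and of the index shifts $T\ell^n_i = \ell^{n+1}_{i+1}$ and $Tc^n_i = c^{n+1}_{i+1}$ induced by left whiskering with $T$. Once these correspondences are in place, each of the required $\finCard_s$ identities is verified by an elementary finite computation on the few elements where the two sides could potentially differ.
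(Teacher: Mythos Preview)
Your proof is correct and follows essentially the same approach as the paper's: reduce each required linearity equation to an identity of natural transformations, translate that via the contravariant functor $T^{(-)}:\finCard_s^{\op}\to[\X,\X]$ to an equation in $\finCard_s$, and verify the latter by elementwise computation on finite sets. Your index choices $l=k$ or $l=k+1$ (codegeneracy case) and $j=\sigma^n_i(k)$ (symmetry case) coincide with the paper's, up to the relabelling that comes from your having swapped the roles of the given and sought indices.
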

\begin{proof}
Letting $\omega \in \Psi_{n+1}(M)$, we shall show first that $\varepsilon^n_i(\omega) = \ell^n_i M \s \omega$ is a sector $n$-form on $M$.  It suffices to show that for each $j \in \{1,...,n\}$ there is some $k \in \{1,...,n+1\}$ such that the following diagram commutes.
$$
\xymatrix{
T^n M \ar[d]_{a_j M} \ar[r]^{\ell_i M} & T^{n+1} M \ar[d]_{a_k M} \ar[r]^\omega & E \ar[d]^\lambda\\
T^{n+1}M \ar[r]_{T\ell_i M} & T^{n+2}M \ar[r]_{T\omega} & TE
}
$$
But the rightmost square commutes since $\omega$ is a sector $(n+1)$-form, so it suffices to obtain the commutativity of the following diagram.
$$
\xymatrix{
T^n \ar[r]^{\ell_i} \ar[d]_{a_j} & T^{n+1} \ar[d]^{a_k}\\
T^{n+1} \ar[r]_{T\ell_i} & T^{n+2}
}
$$
In view of \ref{sec:rms_def_sector_form}, \ref{exa:deg_obj_it_tgt}, and \ref{exa:symm_deg_it_tang}, this diagram is obtained by applying the strict monoidal functor $T^{(-)}:\finCard_s^\op \rightarrow [\X,\X]$ to the following diagram in $\finCard_s$
\begin{equation}\label{eq:diag_in_fincards_for_cod_sforms}
\xymatrix{
n & & n+1 \ar[ll]_{\varepsilon^n_i}\\
n+1 \ar[u]^{\alpha_j} & & n+2 \ar[u]_{\alpha_k} \ar[ll]^(.45){1+\varepsilon_i\:=\:\varepsilon_{i+1}}
}
\end{equation}
so it suffices to find $k$ such that this diagram commutes.  In the case where $i < j$ we can take $k = j+1$, whereas in the case where $i \geqslant j$ we can take $k = j$, for in each case it is straightforward to verify that both composites in \eqref{eq:diag_in_fincards_for_cod_sforms} are then equal to the map $\phi$ given by
$$
\phi(x) = \begin{cases}
j   & (x = 1)\\
x-1 & (1 < x \leqslant i + 1)\\
x-2 & (x > i+1).
\end{cases}
$$

Next we prove that if $\omega$ is a sector $n$-form on $M$ then $\sigma^n_i(\omega) = c_i^n M \s \omega$ is a sector $n$-form on $M$.  Letting $j \in \{1,...,n\}$, it suffices to show that the following diagram commutes:
$$
\xymatrix{
T^nM \ar[d]_{a_j M} \ar[r]^{c_i M} & T^nM \ar[r]^\omega & E \ar[d]^\lambda\\
T^{n+1}M \ar[r]_{Tc_i M} & T^{n+1}M \ar[r]_{T\omega} & TE
}
$$
Again since $\omega$ is a sector $n$-form it suffices to show that there is some $k$ such that the following diagram commutes:
$$
\xymatrix{
T^n \ar[d]_{a_j} \ar[r]^{c_i} & T^n \ar[d]^{a_k}\\
T^{n+1} \ar[r]_{Tc_i} & T^{n+1}
}
$$
But as above, we reason that this diagram is obtained by applying $T^{(-)}:\finCard_s^\op \rightarrow [\X,\X]$ to the following diagram in $\finCard_s$
\begin{equation}\label{eq:diag_in_fincards_for_symm_sforms}
\xymatrix{
n & & n \ar[ll]_{\sigma_i}\\
n+1 \ar[u]^{\alpha_j} & & n+1 \ar[ll]^{1 + \sigma_i = \sigma_{i+1}} \ar[u]_{\alpha_k}
}
\end{equation}
and so it suffices to show that this diagram commutes for some $k$.

In the case where $j \notin \{i,i+1\}$ we can take $k = j$, whereas in the case where $j = i$ we can take $k = i + 1$, while in the case where $j = i+1$ we can take $k = i$, for in each of these three cases it is straightforward to verify that both composites in \eqref{eq:diag_in_fincards_for_symm_sforms} are then equal to the mapping $\phi$ given by
$$\phi(x) = \begin{cases}
j   & (x = 1)\\
i+1 & (x = i+1)\\
i   & (x = i+2)\\
x-1 & (x \notin \{1,i+1,i+2\}).
\end{cases}
$$
\end{proof}

\begin{corollary}\label{thm:symm_cod_struct_on_sector_forms}
There is a symmetric codegenerative commutative monoid
$$\Psi(M)\;:\;\finCard_s \rightarrow \cmon,\;\;\;\;n \mapsto \Psi_n(M)$$
where $\Psi_n(M)$ is the commutative monoid of sector $n$-forms on $M$.
\end{corollary}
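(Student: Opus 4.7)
The plan is to obtain $\Psi(M)$ as a subfunctor of the symmetric codegenerative commutative monoid $\X(T^{(-)}M,E):\finCard_s \rightarrow \cmon$ constructed in Example \ref{exa:cod_sets_ind_tngt_str}. By Corollary \ref{thm:pres_cosimpl_obs_gens_rels}(iii), it suffices to equip the graded commutative monoid $(\Psi_n(M))_{n \in \N}$ (whose components are commutative monoids by Proposition \ref{thm:sector_nforms_submonoid}) with codegeneracy and symmetry maps $\varepsilon^n_i$, $\sigma^n_i$ as in \eqref{eq:grob_cod}, \eqref{eq:grob_symm} satisfying the relations \eqref{eq:codegen_rels}, \eqml{eq:moore_rels}, \eqref{eq:codsymm_rels}.

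First I would invoke Proposition \ref{thm:cod_symm_sector_forms} to obtain the commutative monoid homomorphisms $\varepsilon^n_i:\Psi_{n+1}(M) \to \Psi_n(M)$ and $\sigma^n_i:\Psi_n(M) \to \Psi_n(M)$ as restrictions of the corresponding maps on $\X(T^{(-)}M,E)$. Then the key observation is that these restrictions fit into commuting squares
\[
\xymatrix{
\Psi_{n+1}(M) \ar@{^{(}->}[r] \ar[d]_{\varepsilon^n_i} & \X(T^{n+1}M,E) \ar[d]^{\varepsilon^n_i}\\
\Psi_n(M) \ar@{^{(}->}[r]                              & \X(T^n M,E)
}
\qquad
\xymatrix{
\Psi_n(M) \ar@{^{(}->}[r] \ar[d]_{\sigma^n_i} & \X(T^n M,E) \ar[d]^{\sigma^n_i}\\
\Psi_n(M) \ar@{^{(}->}[r]                     & \X(T^n M,E)
}
\]
of monoid homomorphisms, where the horizontal arrows are the inclusions from Proposition \ref{thm:sector_nforms_submonoid}.

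Since $\X(T^{(-)}M,E)$ is already a symmetric codegenerative commutative monoid (Example \ref{exa:cod_sets_ind_tngt_str}), the equations \eqref{eq:codegen_rels}, \eqref{eq:moore_rels}, \eqref{eq:codsymm_rels} hold among the ambient maps $\varepsilon^n_i,\sigma^n_i$ on the hom-monoids $\X(T^n M,E)$. These equations then descend to the restricted maps on the submonoids $\Psi_n(M)$ simply because the inclusions $\Psi_n(M) \hookrightarrow \X(T^n M,E)$ are injective and the diagrams above commute. Thus the restricted maps satisfy the relations required by Corollary \ref{thm:pres_cosimpl_obs_gens_rels}(iii), yielding the desired symmetric codegenerative commutative monoid $\Psi(M):\finCard_s \rightarrow \cmon$.

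There is essentially no obstacle here, since all the substantive content has been absorbed into the two preceding propositions: \ref{thm:sector_nforms_submonoid} shows that the submonoid structure is preserved under the monoid operations on each grade, and \ref{thm:cod_symm_sector_forms} shows that the symmetric codegenerative operations restrict to sector forms. The corollary is therefore just the assembly of these two facts with the equational characterization of symmetric codegenerative objects from \ref{thm:pres_cosimpl_obs_gens_rels}(iii).
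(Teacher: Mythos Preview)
Your proof is correct and follows essentially the same approach as the paper: invoke \ref{thm:cod_symm_sector_forms} to obtain the restricted codegeneracy and symmetry homomorphisms, observe that the relations of \ref{thm:pres_cosimpl_obs_gens_rels}(iii) hold because they already hold in the ambient symmetric codegenerative object $\X(T^{(-)}M,E)$ and the inclusions are monic, and conclude. (Note the typo \texttt{\textbackslash eqml} for \texttt{\textbackslash eqref} in your second sentence.)
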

\begin{proof}
By \ref{thm:cod_symm_sector_forms}, the graded commutative monoid $(\Psi_n(M))$ is equipped with codegeneracy and symmetry homomorphisms $\varepsilon^n_i$ and $\sigma^n_i$.  These are restrictions of the codegeneracy and symmetry maps carried by the symmetric codegenerative set $\X(T^{(-)}M,E)$, so they satisfy the equations listed in \ref{thm:pres_cosimpl_obs_gens_rels}(iii).  Hence an application of \ref{thm:pres_cosimpl_obs_gens_rels}(iii) yields the needed result.
\end{proof}

By the results of section \ref{sec:presFundamentalCoface}, in order to show that the symmetric codegenerative structure on sector forms is part of a symmetric cosimplicial structure, it suffices to define the \textit{fundamental coface} maps $\delta^n_1:\Psi_n(M) \rightarrow \Psi_{n+1}(M)$ and check that they satisfy certain equations (\ref{thm:main_fund_coface_lemma}).  We now proceed to define these maps.

\subsection{The fundamental derivative of a sector form}\label{sec:fund_deriv_sf}

Given a sector $n$-form $\omega:T^n M \rightarrow E$ on $M$, we define the \textbf{fundamental derivative} $\delta_1(\omega)$ of $\omega$ as the following composite morphism
$$\delta_1(\omega) = \left(T^{n+1}M \xrightarrow{T\omega} TE \xrightarrow{\hat{p}} E\right)$$
where $\hat{p}$ denotes the principal projection associated to the differential object $E$ (\ref{sec:princ_projn}).

\begin{theorem}\label{thm:fund_deriv_sector_form}
The fundamental derivative of a sector $n$-form $\omega$ on $M$ is a sector $(n+1)$-form $\delta_1(\omega)$ on $M$.
\end{theorem}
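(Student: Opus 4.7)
The plan is to verify, for each $j \in \{1,\ldots,n+1\}$, the linearity equation
\[ a^{n+1}_j M \s T(\delta_1(\omega)) \;=\; \delta_1(\omega) \s \lambda \]
that characterises $\delta_1(\omega)$ as a sector $(n+1)$-form. Unfolding $\delta_1(\omega) = T\omega \s \hat{p}$, this reduces to
\[ a^{n+1}_j M \s T^2\omega \s T\hat{p} \;=\; T\omega \s \hat{p} \s \lambda \]
as morphisms $T^{n+1}M \to TE$. I will check this separately for $j = 1$ and for $j \geqslant 2$, exploiting the symmetric semigroup structure $(T,\ell,c)$ of \ref{exa:symm_deg_it_tang} together with the lift identities \ref{prop:diffObjs}(vi), (vii).

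For $j = 1$, the factor $c^{n+2}_{(1)}$ is the identity (by the empty-composite convention of \ref{def:notn_perms} applied to $c$), whence $a^{n+1}_1 M = \ell_{T^nM}$. Naturality of $\ell:T \Rightarrow T^2$ at $\omega:T^nM \to E$ gives $\ell_{T^nM} \s T^2\omega = T\omega \s \ell_E$, and \ref{prop:diffObjs}(vi) gives $\ell_E \s T\hat{p} = \hat{p} \s \lambda$; chaining these produces the needed equality. (Note that this argument uses only the naturality of $\ell$ and identity (vi), and not the sector-form property of $\omega$ itself.)

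For $j \geqslant 2$, the strategy is to reduce to the sector $n$-form property of $\omega$ in its $(j-1)$-th variable. Direct inspection on elements shows that in $\finCard_s$ we have the identity
\[ \alpha^{n+1}_j \;=\; \sigma^{n+2}_1 \s (1 + \alpha^n_{j-1})\;:\;n+2 \to n+1\;. \]
Applying the strict monoidal functor $T^\sharp:\finCard_s \to [\X,\X]^{\op}$ of \ref{exa:deg_obj_it_tgt} and evaluating at $M$ translates this into the factorisation
\[ a^{n+1}_j M \;=\; T(a^n_{j-1}M) \s c_{T^nM}\;. \]
Now chaining in sequence (a) this factorisation, (b) naturality of $c:T^2 \Rightarrow T^2$ at $\omega$ to slide $T^2\omega$ rightward past $c$, (c) the image under $T$ of the sector-form condition $a^n_{j-1}M \s T\omega = \omega \s \lambda$, and (d) identity \ref{prop:diffObjs}(vii), namely $T\lambda \s c_E \s T\hat{p} = \hat{p} \s \lambda$, yields
\begin{align*}
a^{n+1}_j M \s T^2\omega \s T\hat{p}
&= T(a^n_{j-1}M) \s c_{T^nM} \s T^2\omega \s T\hat{p} \\
&= T(a^n_{j-1}M) \s T^2\omega \s c_E \s T\hat{p} \\
&= T\omega \s T\lambda \s c_E \s T\hat{p} \\
&= T\omega \s \hat{p} \s \lambda,
\end{align*}
which is the desired equation.

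The main obstacle is locating the right $\finCard_s$ identity expressing $\alpha^{n+1}_j$ in terms of $\alpha^n_{j-1}$ together with a symmetry; once this purely combinatorial fact is in place, the PROP-theoretic formalism of Sections \ref{sec:monSemigroups} and \ref{sec:symCosimp} exports it automatically to the required factorisation of $a^{n+1}_j M$, and the remainder of the verification collapses to routine naturality together with the two lift identities of \ref{prop:diffObjs}.
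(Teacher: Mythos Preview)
Your proof is correct and follows essentially the same route as the paper's: the same case split on $j=1$ versus $j\geqslant 2$, the same $\finCard_s$ identity $\alpha^{n+1}_j = \sigma^{n+2}_1 \s (1+\alpha^n_{j-1})$ (which the paper verifies as a commuting triangle), and the same appeals to naturality of $\ell$ and $c$ together with Proposition~\ref{prop:diffObjs}(vi) and (vii). The only difference is presentational---you chain equalities where the paper draws commutative diagrams.
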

\begin{proof}
Letting $j \in \{1,...,n+1\}$, it suffices to show that the following diagram commutes.
\begin{equation}\label{eq:diag_fund_cof_sf_proof}
\xymatrix{
T^{n+1}M \ar[d]_{a_j M} \ar[r]^{T\omega} & TE \ar[r]^{\hat{p}} & E \ar[d]^{\lambda}\\
T^{n+2}M \ar[r]_{TT\omega} & TTE \ar[r]_{T\hat{p}} & TE
}
\end{equation}
To this end, we proceed in two cases.  First consider the case where $j = 1$.  Then $a^{n+1}_j = a^{n+1}_1 = \ell^{n+1}_1 \s c^{n+2}_{{(1)}} = \ell T^n$, so it suffices to show that the following diagram commutes.
$$
\xymatrix{
T^{n+1}M \ar[d]_{a_1 M\:=\:\ell T^nM} \ar[r]^{T\omega} & TE \ar[d]_{\ell E} \ar[r]^{\hat{p}} & E \ar[d]^\lambda\\
T^{n+2}M \ar[r]_{TT\omega} & TTE \ar[r]_{T\hat{p}} & TE
}
$$
But the leftmost square commutes by the naturality of $\ell$, and the rightmost square commutes by properties of differential objects (Proposition \ref{prop:diffObjs}(v)).  

Now consider the case where $j > 1$.  We want to show that the periphery of the following diagram commutes.
\begin{equation}\label{eq:diag_case_i_neq_j}
\xymatrix{
T^{n+1}M \ar[d]_{a_jM} \ar[r]^{T\omega} & TE \ar[d]_{T\lambda \s cE} \ar[r]^{\hat{p}} & E \ar[d]^\lambda\\
T^{n+2}M \ar[r]_{TT\omega} & TTE \ar[r]_{T\hat{p}} & TE
}
\end{equation}
The rightmost square again commutes by properties of differential objects, namely \ref{prop:diffObjs}(vi).

In order to show that the leftmost square in \eqref{eq:diag_case_i_neq_j} also commutes, it suffices to show that the following diagram commutes.
$$
\xymatrix{
T^{n+1}M \ar[rr]^{T\omega} \ar[dd]_{a_j M} \ar[dr]^{Ta_{j-1} M} & & TE \ar[d]^{T\lambda}\\
& T^{n+2}M \ar[dl]_{cT^n M} \ar[r]^{TT\omega} & TTE \ar[d]^{cE}\\
T^{n+2}M \ar[rr]_{TT\omega} & & TTE
}
$$
The upper-right cell commutes since $\omega$ is a sector $n$-form, and the lower-right cell commutes by the naturality of $c$.  Hence it suffices to show that the following diagram commutes.
$$
\xymatrix{
T^{n+1} \ar[rr]^{Ta_{j-1}} \ar[drr]_{a_j} & & T^{n+2} \ar[d]^{cT^n\:=\:c^{n+2}_1}\\
& & T^{n+2}
}
$$
In view of \ref{sec:rms_def_sector_form}, \ref{exa:deg_obj_it_tgt}, \ref{exa:symm_deg_it_tang}, this diagram is obtained by applying the strict monoidal functor $T^{(-)}:\finCard_s^\op \rightarrow [\X,\X]$ to the following diagram in $\finCard_s$
$$
\xymatrix{
n+1 \ar@{<-}[rr]^{1+\alpha_{j-1}} \ar@{<-}[drr]_{\alpha_j} & & n+2 \ar@{<-}[d]^{\sigma_1}\\
& & n+2
}
$$
which commutes, as one readily verifies directly, using the fact that $1+\alpha_{j-1}$ is given by $1 \mapsto 1$ and $1 + x \mapsto 1 + \alpha_{j-1}(x)$ for all $x \in n+1$.
\end{proof}

\begin{theorem}\label{thm:sym_cosimpl_cmon_sector_forms}
There is a symmetric cosimplicial commutative monoid
$$\Psi(M)\;:\;\finCard \rightarrow \cmon,\;\;\;\;n \mapsto \Psi_n(M)$$
where $\Psi_n(M)$ is the commutative monoid of sector $n$-forms on $M$.  The codegeneracies and symmetries carried by $\Psi(M)$ are those obtained in \ref{thm:cod_symm_sector_forms}, and the fundamental cofaces of $\Psi(M)$ are the maps
$$\delta^n_1\;:\;\Psi_n(M) \rightarrow \Psi_{n+1}(M)\;\;\;\;\;\;(n \in \N)$$
that send a sector $n$-form $\omega$ to its fundamental derivative $\delta_1(\omega) = T\omega \s \hat{p}$ (\ref{thm:fund_deriv_sector_form}).
\end{theorem}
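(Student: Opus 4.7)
The plan is to invoke Theorem \ref{thm:fund_cof_pres_symm_cosimpl_obj}, which reduces the task to three things: (a) the existence of a symmetric codegenerative commutative monoid $\Psi(M):\finCard_s \rightarrow \cmon$, which is already established in Corollary \ref{thm:symm_cod_struct_on_sector_forms}; (b) the fact that each $\delta^n_1 = (-) \mapsto T(-) \s \hat{p}$ restricts to a monoid homomorphism $\Psi_n(M) \rightarrow \Psi_{n+1}(M)$ (Theorem \ref{thm:fund_deriv_sector_form} already shows the target is a sector form); and (c) verification of the two pairs of defining equations \eqref{eq:fund_cofcod_eqs} and \eqref{eq:fund_cofsymm_eqs}.

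For (b), I would use Proposition \ref{prop:cmons_in_tngt_cat} (so that $T$ is additive, hence $T(\omega + \tau) = T\omega + T\tau$ and $T(0) = 0$) together with Proposition \ref{prop:diffObjs}(i) (so that postcomposing with $\hat{p}$ preserves the additive structure). For (c), I intend to verify each of the four generic equations by a short direct computation, using in each case one tangent-categorical identity together with one property of $\hat{p}$ from Proposition \ref{prop:diffObjs}:

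\begin{itemize}
\item $\delta_1 \s \varepsilon_1 = 1$: for $\omega \in \Psi_n(M)$, $\ell_1^n M \s T\omega \s \hat{p} = \omega \s \lambda \s \hat{p} = \omega$, where the first equation is the $i=1$ instance of the sector-form condition (noting $a_1^n = \ell_1^n$ since $c^{n+1}_{(1)}$ is an empty composite) and the second is Proposition \ref{prop:diffObjs}(ii).
\item $\delta_1 \s \varepsilon_{j+1} = \varepsilon_j \s \delta_1$: both sides evaluate to $T^j \ell T^{n - j - 1} M \s T\omega \s \hat{p}$, using only that $T^{(-)}$ is strict monoidal on $\finCard_s$, so that $T\ell_j^{n-1} = \ell_{j+1}^n$.
\item $\delta_1 \s \delta_1 \s \sigma_1 = \delta_1 \s \delta_1$: compute $\sigma_1 \delta_1 \delta_1(\omega) = c^{n+2}_1 M \s TT\omega \s T\hat{p} \s \hat{p}$; naturality of $c$ rewrites this as $TT\omega \s c_E \s T\hat{p} \s \hat{p}$, and Proposition \ref{prop:diffObjs}(v) collapses $c_E \s T\hat{p} \s \hat{p}$ to $T\hat{p} \s \hat{p}$.
\item $\delta_1 \s \sigma_{i+1} = \sigma_i \s \delta_1$: both sides reduce to $T^i c T^{n - i - 1} M \s T\omega \s \hat{p}$ via the strict monoidal identity $T c_i^n = c_{i+1}^{n+1}$.
\end{itemize}

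With these verified, Theorem \ref{thm:fund_cof_pres_symm_cosimpl_obj} delivers a unique extension of the symmetric codegenerative commutative monoid $\Psi(M):\finCard_s \rightarrow \cmon$ to a symmetric cosimplicial commutative monoid $\Psi(M):\finCard \rightarrow \cmon$ whose fundamental cofaces are the prescribed fundamental derivatives, and the codegeneracies and symmetries agree with those of Proposition \ref{thm:cod_symm_sector_forms} by construction. The main conceptual point — and the only place the proof really uses more than bookkeeping — is step $\delta_1 \s \delta_1 \s \sigma_1 = \delta_1 \s \delta_1$, where the interaction between $c$ on iterated tangents and $\hat{p}$ on the differential object is essential; the remaining verifications are bookkeeping inside the PROP-like structure $T^{(-)}$, which is precisely the reason the fundamental-coface presentation of Section \ref{sec:presFundamentalCoface} is so economical for this application. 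No serious obstacle is anticipated beyond keeping track of the indices $n$, $i$, $j$ during these routine computations.
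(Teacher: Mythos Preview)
Your proposal is correct and follows essentially the same approach as the paper: invoke the symmetric codegenerative structure from \ref{thm:symm_cod_struct_on_sector_forms}, use \ref{thm:fund_deriv_sector_form} plus additivity of $T$ and \ref{prop:diffObjs}(i) to see that $\delta^n_1$ is a well-defined monoid homomorphism, and then verify the four equations \eqref{eq:fund_cofcod_eqs}, \eqref{eq:fund_cofsymm_eqs} exactly as you outline (the paper cites Lemma \ref{thm:main_fund_coface_lemma} rather than Theorem \ref{thm:fund_cof_pres_symm_cosimpl_obj}, but these are equivalent here). Your four verifications match the paper's computations line for line, including the use of \ref{prop:diffObjs}(ii) for $\delta_1\s\varepsilon_1 = 1$, the identity $T\ell^n_j = \ell^{n+1}_{j+1}$ for the second, naturality of $c$ with \ref{prop:diffObjs}(v) for the third, and $Tc^n_i = c^{n+1}_{i+1}$ for the fourth.
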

\begin{proof}
By \ref{thm:fund_deriv_sector_form}, the maps $\delta^n_1$ are well-defined, and they are homomorphisms of commutative monoids since $T$ is additive (\ref{prop:cmons_in_tngt_cat}) and $\hat{p}:TE \rightarrow E$ is a homomorphism of commutative monoids (\ref{prop:diffObjs}).  By \ref{thm:symm_cod_struct_on_sector_forms} we have already defined a symmetric codegenerative object $\Psi(M):\finCard_s \rightarrow \cmon$, so by \ref{thm:main_fund_coface_lemma} it suffices to verify the equations \eqref{eq:fund_cofcod_eqs} and \eqref{eq:fund_cofsymm_eqs}, which govern the interaction of the fundamental coface maps $\delta^n_1$ with the codegeneracies and symmetries.

The first equation $\delta_1 \s \varepsilon_1 = 1$ in \eqref{eq:fund_cofcod_eqs} holds, since for each $\omega \in \Psi_n(M)$ we compute that
$$\varepsilon^n_1(\delta^n_1(\omega)) = \ell^n_1 M \s T\omega \s \hat{p} = a^n_1 M \s T\omega \s \hat{p} = \omega \s \lambda \s \hat{p} = \omega$$
using the fact that $\omega$ is a sector form as well as the equations $a^n_1 = \ell^n_1$, $\lambda \s \hat{p}  = 1_E$.

The second equation $\delta_1 \s \varepsilon_{j+1} = \varepsilon_j \s \delta_1$ in \eqref{eq:fund_cofcod_eqs} holds, since for each $\omega \in \Psi_{n+1}(M)$ we compute that
$$\delta^n_1(\varepsilon^n_j(\omega)) = T\ell^n_j M \s T\omega \s \hat{p} = \ell^{n+1}_{j+1}M \s T\omega \s \hat{p} = \varepsilon^{n+1}_{j+1}(\delta^{n+1}_1(\omega))$$
using the fact that $T\ell^n_j = T^j\ell T^{n-j} = \ell^{n+1}_{j+1}$.

The first equation $\delta_1 \s \delta_1 \s \sigma_1 = \delta_1 \s \delta_1$ in \eqref{eq:fund_cofsymm_eqs} holds, since for each $\omega \in \Psi_n(M)$ we compute that
$$
\begin{array}{lllllll}
\sigma^{n+2}_1(\delta^{n+1}_1(\delta^n_1(\omega))) & = & c_1^{n+2} M \s TT\omega \s T\hat{p} \s \hat{p} & = & cT^n M \s TT\omega \s T\hat{p} \s \hat{p} & & \\
& = & TT\omega \s cE \s T\hat{p} \s \hat{p} & = & TT\omega \s T\hat{p} \s \hat{p} & = & \delta^{n+1}_1(\delta^n_1(\omega))
\end{array}
$$
using the naturality of $c$ and the fact that $cE \s T\hat{p} \s \hat{p} = T\hat{p} \s \hat{p}$ (\ref{prop:diffObjs}).

The second equation $\delta_1 \s \sigma_{i+1} = \sigma_i \s \delta_1$ in \eqref{eq:fund_cofsymm_eqs} holds, since for each $\omega \in \Psi_n(M)$ we compute that
$$\delta^n_1(\sigma^n_i(\omega)) = Tc^n_i M \s T\omega \s \hat{p} = c^{n+1}_{i+1} M \s T\omega \s \hat{p} = \sigma^{n+1}_{i+1}(\delta^n_1(\omega))$$
using the fact that $Tc^n_i = T^ic T^{n-i-1} = c^{n+1}_{i+1}$.
\end{proof}

\begin{remark}\label{rmk:der_posi}
By \ref{thm:main_fund_coface_lemma}, the cofaces carried by the symmetric cosimplicial set of sector forms $\Psi(M)$ are the maps
$$\delta^n_i = \delta^n_1 \s \sigma^{n+1}_{(i)}\;\;:\;\;\Psi_n(M) \rightarrow \Psi_{n+1}(M)\;\;\;\;(n \in \N, 1 \leqslant i \leqslant n+1)$$
that send a sector $n$-form to the sector $(n+1)$-form
$$\delta_i(\omega) = \left(T^{n+1}M \xrightarrow{c_{(i)}} T^{n+1}M \xrightarrow{T\omega} TE \xrightarrow{\hat{p}} E\right)$$
which we call the \textbf{derivative of $\omega$ in position $i$}.
\end{remark}

\begin{corollary}\label{thm:symm_cosimpl_ab_grp_sector_forms}
If $E$ is a subtractive differential object in $\X$, then there is a symmetric cosimplicial abelian group
$$\Psi(M)\;:\;\finCard \rightarrow \ab,\;\;\;\;n \mapsto \Psi_n(M)$$
where $\Psi_n(M)$ is the abelian group of sector $n$-forms on $M$ with values in $E$.
\end{corollary}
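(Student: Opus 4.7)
The plan is to obtain this corollary as a straightforward lift of Theorem \ref{thm:sym_cosimpl_cmon_sector_forms} from the category $\cmon$ of commutative monoids to the category $\ab$ of abelian groups, exploiting the fact that $\ab$ embeds as a full subcategory of $\cmon$.

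First I would invoke Proposition \ref{thm:sector_nforms_submonoid}, which tells us that when $E$ is subtractive, each $\Psi_n(M)$ is in fact a subgroup of the abelian group $\X(T^nM,E)$ (the latter being an abelian group by \ref{sec:cmons_in_tngnt_cat} since $E$ is subtractive, equivalently an abelian group object in $\X$). Thus each commutative monoid $\Psi_n(M)$ appearing in Theorem \ref{thm:sym_cosimpl_cmon_sector_forms} is actually an abelian group.

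Next, since every homomorphism of commutative monoids between abelian groups is automatically a homomorphism of abelian groups, the codegeneracies $\varepsilon^n_i$, symmetries $\sigma^n_i$, and fundamental cofaces $\delta^n_1$ already established in \ref{thm:cod_symm_sector_forms} and \ref{thm:sym_cosimpl_cmon_sector_forms} as $\cmon$-morphisms lift without alteration to $\ab$-morphisms. Equivalently, the forgetful functor $U:\ab \hookrightarrow \cmon$ is fully faithful, so the functor $\Psi(M):\finCard \rightarrow \cmon$ of \ref{thm:sym_cosimpl_cmon_sector_forms} factors uniquely as a functor $\finCard \rightarrow \ab$ followed by $U$, and this factorization is the desired symmetric cosimplicial abelian group.

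There is no obstacle here: the entire content resides in \ref{thm:sector_nforms_submonoid} (subtractivity passes to the submonoid of sector forms) together with \ref{thm:sym_cosimpl_cmon_sector_forms} (the cosimplicial structure is already in place). The corollary is thus immediate.
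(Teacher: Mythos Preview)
Your proposal is correct and matches the paper's own proof, which simply says that the result follows from Theorem \ref{thm:sym_cosimpl_cmon_sector_forms} together with Proposition \ref{thm:sector_nforms_submonoid}. You have merely made explicit the trivial lifting step through the full embedding $\ab \hookrightarrow \cmon$, which the paper leaves implicit.
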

\begin{proof}
This follows from the preceding theorem and \ref{thm:sector_nforms_submonoid}.
\end{proof}

\begin{corollary}\label{thm:func_sector_forms}
Let $E$ be a differential object in a Cartesian tangent category $(\X,\T)$.
\begin{enumerate}[{\rm (i)}]
\item There is a functor
$$\Psi\;:\;\X^\op \rightarrow [\finCard,\cmon],\;\;\;\;\;\;M \mapsto \Psi(M)$$
that sends each object $M$ of $\X$ to the symmetric cosimplicial commutative monoid $\Psi(M)$ of sector forms on $M$ with values in $E$.
\item If $E$ is a subtractive differential object, then the functor $\Psi$ lifts to a functor valued in the category $[\finCard,\ab]$ of symmetric cosimplicial abelian groups.
\end{enumerate}
\end{corollary}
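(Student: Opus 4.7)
The plan is to define $\Psi(f):\Psi(N) \rightarrow \Psi(M)$ for a morphism $f:M \rightarrow N$ in $\X$ by restricting the precomposition maps
$$\X(T^n f,E):\X(T^n N,E) \rightarrow \X(T^n M,E),\quad \omega \mapsto T^n f \s \omega\quad(n \in \N)\;,$$
which together constitute a morphism of symmetric codegenerative commutative monoids $\X(T^{(-)}N,E) \rightarrow \X(T^{(-)}M,E)$ by the already noted functoriality of \eqref{eq:func_tng_cat_to_symm_cod_sets} in \ref{exa:cod_sets_ind_tngt_str}.  First I would check that precomposition with $T^n f$ sends sector forms to sector forms: given $\omega \in \Psi_n(N)$, the naturality of the transformation $a^n_j:T^n \Rightarrow T^{n+1}$ gives $a^n_j M \s T(T^nf \s \omega) = a^n_jM \s T^{n+1}f \s T\omega = T^nf \s a^n_jN \s T\omega = T^nf \s \omega \s \lambda$, so $T^nf \s \omega$ is a sector $n$-form on $M$.

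This yields, for each $f$, a morphism of symmetric codegenerative commutative monoids $\Psi(f):\Psi(N) \rightarrow \Psi(M)$ in $[\finCard_s,\cmon]$, whose components are monoid homomorphisms inherited from those of $\X(T^{(-)}(-),E)$.  Next I would show that $\Psi(f)$ respects the fundamental coface maps $\delta^n_1$, so that by the universal extension property in Lemma \ref{thm:main_fund_coface_lemma} (applied pointwise to the hom-sets, or equivalently by \ref{thm:fund_cof_pres_symm_cosimpl_obj}), $\Psi(f)$ upgrades uniquely to a natural transformation $\Psi(N) \rightarrow \Psi(M)$ of symmetric cosimplicial commutative monoids.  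Concretely, for $\omega \in \Psi_n(N)$ one computes
$$\Psi(f)_{n+1}\bigl(\delta^n_1(\omega)\bigr) = T^{n+1}f \s T\omega \s \hat{p} = T(T^nf \s \omega) \s \hat{p} = \delta^n_1\bigl(\Psi(f)_n(\omega)\bigr)\;,$$
using functoriality of $T$.

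Functoriality of the assignment $f \mapsto \Psi(f)$, i.e. $\Psi(1_M) = 1_{\Psi(M)}$ and $\Psi(g \s f) = \Psi(f) \s \Psi(g)$ for composable $f,g$ in $\X$, follows immediately from the functoriality of each $T^n:\X \rightarrow \X$ and the associativity of composition.  This establishes (i).  For (ii), Proposition \ref{thm:sector_nforms_submonoid} shows that when $E$ is subtractive each $\Psi_n(M)$ is an abelian group, and any commutative monoid homomorphism between abelian groups automatically preserves inverses; hence the codegeneracies, symmetries, cofaces, and components of $\Psi(f)$ are all abelian group homomorphisms, so $\Psi$ lifts through the forgetful functor $\ab \rightarrow \cmon$ to yield the desired functor into $[\finCard,\ab]$.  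No step presents a serious obstacle: the only calculation requiring care is the verification that pulling back along $f$ commutes with the fundamental coface, and the rest is a routine exercise in naturality.
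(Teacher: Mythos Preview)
Your proposal is correct and follows essentially the same route as the paper: define $\Psi(f)$ by precomposition with $T^nf$, use naturality of $a^n_j$ to show sector forms are preserved, inherit compatibility with codegeneracies and symmetries from the ambient functor $\X(T^{(-)}(-),E)$, and then check compatibility with the fundamental cofaces $\delta^n_1$ directly.  The only minor point is that your appeal to Lemma~\ref{thm:main_fund_coface_lemma} is slightly misdirected---that lemma concerns \emph{extending} codegenerative structure to cosimplicial structure, whereas here both $\Psi(N)$ and $\Psi(M)$ are already cosimplicial and you merely need naturality of $\Psi(f)$ on the generators $\varepsilon^n_i,\sigma^n_i,\delta^n_1$ of $\finCard$; the paper cites Theorem~\ref{thm:fund_cof_pres_fincard} for this, and your parenthetical reference to \ref{thm:fund_cof_pres_symm_cosimpl_obj} serves the same purpose.
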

\begin{proof}
Recall from \ref{exa:cod_sets_ind_tngt_str} that we have a functor $\X^\op \rightarrow [\finCard_s,\cmon]$ that sends each morphism $f:M \rightarrow N$ in $\X$ to the natural transformation $\X(T^{(-)}f,E):\X(T^{(-)}N,E) \Rightarrow \X(T^{(-)}M,E)$ whose components $\X(T^n f,E):\X(T^n N,E) \rightarrow \X(T^n M,E)$ are given by precomposition with $T^nf:T^nM \rightarrow T^nN$.  It follows immediately from the naturality of the transformations $a^n_j:T^n \Rightarrow T^{n+1}$ that $\X(T^n f,E)$ restricts to yield a homomorphism $\Psi_n(f):\Psi_n(N) \rightarrow \Psi_n(M)$ between the submonoids consisting of sector $n$-forms.  We claim that the homomorphisms $\Psi_n(f)$ constitute a natural transformation $\Psi(f):\Psi(N) \Rightarrow \Psi(M)$.  It suffices to verify the naturality condition on the generators $\varepsilon^n_i,\sigma^n_i,\delta^n_1$ of $\finCard$ (\ref{thm:fund_cof_pres_fincard}).  But for the generators $\varepsilon^n_i$, $\sigma^n_i$ this naturality condition follows from the naturality of $\X(T^{(-)}f,E)$, so it suffices to show that
$$
\xymatrix{
\Psi_n(N) \ar[d]_{\delta^n_1} \ar[r]^{\Psi_n(f)} & \Psi_n(M) \ar[d]^{\delta^n_1}\\
\Psi_{n+1}(N) \ar[r]_{\Psi_{n+1}(f)} & \Psi_{n+1}(M)
}
$$
commutes, but this follows immediately from the definitions. 
\end{proof}


\section{Complexes of forms and the exterior derivative}\label{sec:complexes}

Given an (augmented) cosimplicial abelian group $C:\finOrd \rightarrow \ab$, it is well-known \cite[Definition 8.2.1]{weibel} that the underlying graded abelian group $(C_n)_{n \in \N}$ carries the structure of a (non-negatively graded) cochain complex $C_\bullet$ when we define the differential $\partial_n:C_n \rightarrow C_{n+1}$ by
$$\partial_n(c) = \sum_{i = 1}^{n+1} (-1)^{i-1} \delta^n_i(c)\;\;\;\;\;\;\;\;(c \in C_n).$$
In particular, we therefore have that $\partial_n \s \partial_{n+1} = 0$.  We call $C_\bullet$ the \textbf{cochain complex associated to} $C$.

In the present section, we show that when $C$ is a \textit{symmetric} cosimplicial abelian group one also obtains a subcomplex $C_\bullet^\alt \hookrightarrow C_\bullet$ consisting of the \textit{alternating elements} of $C$.  Applied to the symmetric cosimpicial object of sector forms (\ref{thm:symm_cosimpl_ab_grp_sector_forms}), we obtain complexes of sector forms ($C_\bullet$) and singular forms ($C_\bullet^\alt$) in tangent categories.

\subsection{Sector forms and the exterior derivative}

Let $E$ be a subtractive differential object in a Cartesian tangent category $(\X,\T)$, and let $M$ be an object of $\X$.

\begin{definition}\label{def:compl_sector_forms}
\begin{enumerate}[{\rm (i)}]
\item The \textbf{complex of sector forms} on $M$ is defined as the cochain complex $\Psi_\bullet(M)$ associated to the cosimplicial abelian group $\Psi(M)$ of sector forms on $M$ with values in $E$ (\ref{thm:symm_cosimpl_ab_grp_sector_forms}).
\item Given a sector $n$-form $\omega:T^n M \rightarrow E$ on $M$, the \textbf{exterior derivative} of $\omega$ is defined as the sector $(n+1)$-form
$$\partial\omega = \partial_n(\omega) = \sum_{i = 1}^{n+1} (-1)^{i-1} \delta^n_i(\omega)$$
where $\partial_n:\Psi_n(M) \rightarrow \Psi_{n+1}(M)$ is the differential carried by the complex $\Psi_\bullet(M)$, recalling that the sector $(n+1)$-form $\delta^n_i(\omega) = c^{n+1}_{(i)} \s T\omega \s \hat{p}$ is the derivative of $\omega$ in position $i$ (\ref{rmk:der_posi}).
\end{enumerate}
\end{definition}

The following theorem is now immediate, but it would be difficult to prove if we had just defined the exterior derivative directly without first proving Theorem \ref{thm:sym_cosimpl_cmon_sector_forms}:

\begin{theorem}
Let $\omega$ be a sector $n$-form on $M$.  Then $\partial_{n+1}(\partial_n(\omega)) = 0$.
\end{theorem}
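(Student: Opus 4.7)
The plan is to deduce $\partial_{n+1}\partial_n = 0$ as an immediate consequence of the symmetric cosimplicial structure already established. Since $E$ is assumed subtractive, Corollary \ref{thm:symm_cosimpl_ab_grp_sector_forms} provides a symmetric cosimplicial abelian group $\Psi(M):\finCard \rightarrow \ab$. Restricting along the inclusion $\finOrd \hookrightarrow \finCard$ yields an (augmented) cosimplicial abelian group in the classical sense, whose coface operations are precisely the derivatives $\delta_i^n$ in position $i$ that are used in Definition \ref{def:compl_sector_forms} to build the differential $\partial_n$. Hence the statement reduces to the classical fact, already invoked at the start of Section \ref{sec:complexes} and recorded as \cite[Definition 8.2.1]{weibel}, that the alternating sum of cofaces in any cosimplicial abelian group squares to zero.

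For completeness, the standard cancellation argument proceeds as follows. Expanding,
\[
\partial_{n+1}(\partial_n(\omega)) \;=\; \sum_{j=1}^{n+2}\sum_{i=1}^{n+1}(-1)^{i+j}\,\delta_j^{n+1}(\delta_i^n(\omega)),
\]
split the double sum according to whether $i < j$ or $i \geqslant j$. For the terms with $i < j$ one applies the pure coface relation $\delta_j \s \delta_i = \delta_i \s \delta_{j+1}$ for $i \leqslant j$ from \eqref{eq:coface_rels} (which holds in $\Psi(M)$ since it holds in $\finCard$ by Theorem \ref{thm:fincard_gens_rels}) in the equivalent form $\delta_j^{n+1}(\delta_i^n(\omega)) = \delta_i^{n+1}(\delta_{j-1}^n(\omega))$ valid for $i < j$. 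After this substitution and the reindexing $j \mapsto j+1$, this half of the sum becomes the negative of the $i \geqslant j$ half, so the two halves cancel.

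The main obstacle to proving $\partial_{n+1}\partial_n = 0$ in isolation would be verifying the pure coface relation $\delta_j \s \delta_i = \delta_i \s \delta_{j+1}$ directly at the level of sector forms, since it mixes derivatives in different positions $i$ and would entail direct manipulation of $\ell$, $c$, and $\hat{p}$ in the tangent category. The merit of the preceding development, in particular the fundamental-coface presentation of $\finCard$ given by Theorem \ref{thm:fund_cof_pres_fincard}, is precisely that it reduces all such coface identities to formal consequences of the streamlined relations \eqref{eq:fund_cofcod_eqs}--\eqref{eq:fund_cofsymm_eqs} checked in Theorem \ref{thm:sym_cosimpl_cmon_sector_forms}, making the desired vanishing a routine corollary.
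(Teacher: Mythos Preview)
Your proposal is correct and follows the paper's own approach: the paper simply declares the theorem ``immediate'' from the fact that $\Psi(M)$ is a (symmetric) cosimplicial abelian group, which is exactly your first paragraph. Your second paragraph merely spells out the standard alternating-sum cancellation that the paper leaves implicit, and your use of the coface relation \eqref{eq:coface_rels} (in the rewritten form for $i<j$) is correct given the paper's diagrammatic composition convention.
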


\begin{remark}\label{rmk:func_alt_cof_map_complex}
It is well known that the the assignment $C \mapsto C_\bullet$ extends to a functor $(-)_\bullet$ from the category of cosimplicial abelian groups to the category $\cochain_+$ of non-negatively graded cochain complexes\footnote{Although it is well known, the authors are unable to find an explicit statement of precisely this fact in the literature.  However, it can be verified almost immediately, and it can be seen also as a corollary to \cite[\href{http://stacks.math.columbia.edu/tag/0194}{Tag 0194}]{stacks-project}, in view of \cite[\href{http://stacks.math.columbia.edu/tag/018F}{Tag 018F}]{stacks-project}.}.  Hence by \ref{thm:func_sector_forms} we can form the composite functor
$$\Psi_\bullet = \left(\X^\op \xrightarrow{\Psi} [\finCard,\ab] \rightarrow [\finOrd,\ab] \xrightarrow{(-)_\bullet} \cochain_+\right)$$
whose middle factor is the evident forgetful functor.  This functor $\Psi_\bullet$ sends each object $M$ of $\X$ to the complex of sector forms on $M$.
\end{remark}

\subsection{The complex of alternating elements}

Given a symmetric cosimplicial abelian group $C$, we now define a certain subcomplex $C^\alt_\bullet$ of $C_\bullet$.

\begin{definition}\label{def:alt_elt}
Let $n \in \N$.
\begin{enumerate}[{\rm (i)}]
\item We say that an element $c$ of $C_n$ is an \textbf{alternating element} of $C$ if $\sigma^n_i(c) = -c$ for all $i \in \{1,...,n-1\}$, recalling that $\sigma^n_i:C_n \rightarrow C_n$ is the symmetry map carried by $C$.
\item We denote by $C^\alt_n \subseteq C_n$ the subset consisting of all alternating elements.
\end{enumerate}
\end{definition}

\begin{theorem}
Given a symmetric cosimplicial abelian group $C$, the alternating elements of $C$ constitute a subcomplex $C^\alt_\bullet$ of the cochain complex $C_\bullet$ associated to $C$.
\end{theorem}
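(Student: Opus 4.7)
The plan has two parts. First, I would verify that each $C_n^\alt$ is a subgroup of $C_n$. This is immediate from the fact that each symmetry $\sigma_i^n : C_n \to C_n$ is a homomorphism of abelian groups (since $C$ is valued in $\ab$), so $C_n^\alt = \bigcap_{i=1}^{n-1} \ker(\sigma_i^n + \mathrm{id}_{C_n})$ is a subgroup of $C_n$. For $n \leqslant 1$ the indexing set is empty and $C_n^\alt = C_n$.

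The main task is to show that the differential $\partial_n$ sends $C_n^\alt$ into $C_{n+1}^\alt$. Fix $c \in C_n^\alt$ and $j \in \{1,\ldots,n\}$; the goal is $\sigma_j^{n+1}(\partial_n c) = -\partial_n c$. Since $\sigma_j^{n+1}$ is a homomorphism, I distribute it through the sum $\partial_n c = \sum_{i=1}^{n+1}(-1)^{i-1}\delta_i^n(c)$ and rewrite each term $\sigma_j^{n+1}\delta_i^n$ using the coface-symmetry relations \eqref{eq:cofsymm_rels}. Those relations directly cover the cases $i<j$, $i=j$, and $i>j+1$; the remaining case $i=j+1$ is handled by the derived identity $\delta_{j+1}\s\sigma_j = \delta_j$, obtained by composing $\delta_j\s\sigma_j = \delta_{j+1}$ on the right with $\sigma_j$ and invoking the Moore relation $\sigma_j\s\sigma_j = 1$.

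Applying these identities, the $i=j$ and $i=j+1$ terms yield $(-1)^{j-1}\delta_{j+1}^n(c)$ and $(-1)^j\delta_j^n(c)$, contributing $(-1)^{j-1}\bigl(\delta_{j+1}^n(c) - \delta_j^n(c)\bigr)$ to the sum. For $i \neq j, j+1$, each term becomes $(-1)^{i-1}\delta_i^n(\sigma_{j'}^n(c))$, where $j' = j-1$ when $i<j$ and $j' = j$ when $i>j+1$. A quick index check shows $j' \in \{1,\ldots,n-1\}$ in both subcases, so the alternating hypothesis gives $\sigma_{j'}^n(c) = -c$, and these terms contribute $-\sum_{i \neq j, j+1}(-1)^{i-1}\delta_i^n(c)$. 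Adding the two contributions and comparing with $-\partial_n c = -\sum_i (-1)^{i-1}\delta_i^n(c)$ yields exactly the required equality, and the edge cases $n \leqslant 1$ hold trivially because $C_n^\alt = C_n$ and $C_{n+1}^\alt = C_{n+1}$.

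The only genuinely delicate point is the index bookkeeping: I must verify that in each case where an identity from \eqref{eq:cofsymm_rels} replaces $\sigma_j^{n+1}\delta_i^n$ by a composite $\delta_i^n\sigma_{j'}^n$, the resulting index $j'$ lies in the range $\{1,\ldots,n-1\}$ where the alternating property of $c$ applies. This is a short calculation, but it is the step where care is needed; once it is in place, the matching of terms with $-\partial_n c$ is routine arithmetic in the abelian group $C_{n+1}$.
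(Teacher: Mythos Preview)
Your proof is correct and follows essentially the same approach as the paper's: both establish that $C_n^\alt$ is a subgroup and then verify $\sigma_j^{n+1}(\partial_n c) = -\partial_n c$ via the same case analysis on the coface-symmetry relations, with the $i=j+1$ case handled by composing $\delta_j\s\sigma_j = \delta_{j+1}$ with the Moore relation $\sigma_j\s\sigma_j = 1$. One small slip: your closing remark that $C_{n+1}^\alt = C_{n+1}$ for $n \leqslant 1$ is false at $n=1$ (since $C_2^\alt$ is cut out by $\sigma_1^2$), but this is harmless because your main argument already covers $n=1$ with the $i\in\{j,j+1\}$ cases alone.
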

\begin{proof}
$C_n^\alt \hookrightarrow C_n$ is an intersection of equalizers in $\ab$ and hence is a subgroup inclusion.  Letting $c \in C^\alt_n \subseteq C_n$, it suffices to show that the associated element $\partial(c) = \partial_n(c)$ of $C_{n+1}$ is alternating.  Letting $i \in \{1,...,n\}$ we must show that $\sigma^{n+1}_i(\partial(c)) = -\partial(c)$.  Since $\sigma^{n+1}_i$ is a homomorphism of abelian groups we compute that
$$\sigma^{n+1}_i(\partial(c)) = \sum_{j = 1}^{n+1}(-1)^{j-1}\sigma^{n+1}_i(\delta^n_j(c))\;.$$
Using the coface-symmetry relations \eqref{eq:cofsymm_rels} and the fact that $\sigma^{n+1}_i$ is self-inverse, we compute that
$$
\sigma^{n+1}_i(\delta^n_j(c)) = \begin{cases}
\delta^n_j(\sigma^n_{i-1}(c)) = \delta^n_j(-c) = -\delta^n_j(c) & (j < i)\\
\delta^n_{i+1}(c)                                               & (j = i)\\
\delta^n_i(c)                                                   & (j = i+1)\\
\delta^n_j(\sigma^n_i(c)) = \delta^n_j(-c) = -\delta^n_j(c)    & (j > i+1)\\
\end{cases}
$$
since $c$ is alternating.  Hence, recalling that $\partial(c) = \sum_{j = 1}^{n+1}t_j$ where $t_j = (-1)^{j-1}\delta^n_j(c)$, we compute that
$$\sigma^{n+1}_i(\partial(c)) = \left(\sum_{j < i}-t_j\right) - t_{i+1} - t_i + \left(\sum_{j > i + 1}-t_j\right) = -\partial(c)$$
since $i$ and $i+1$ are of opposite parity.
\end{proof}

\begin{definition}\label{def:compl_alt_elts}
Given a symmetric cosimplicial abelian group $C$, we call the subcomplex $C^\alt_\bullet$ of $C_\bullet$ the \textbf{complex of alternating elements of $C$}.
\end{definition}

\begin{proposition}\label{thm:func_compl_alt_elts}
There is a functor
$$(-)_\bullet^\alt\;:\;[\finCard,\ab] \rightarrow \cochain_+\;\;\;\;\;\;\;\;C \mapsto C_\bullet^\alt$$
from the category of symmetric cosimplicial abelian groups to the category of (non-negatively graded) cochain complexes, sending a symmetric cosimplicial abelian group $C$ to the complex of alternating elements $C^\alt_\bullet$ of $C$.
\end{proposition}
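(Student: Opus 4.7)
The plan is to define the functor $(-)^\alt_\bullet$ on objects by the assignment $C \mapsto C^\alt_\bullet$ already furnished by the preceding theorem, and on morphisms by componentwise restriction. Given a morphism $f : C \Rightarrow D$ in $[\finCard, \ab]$, the main task is to check that each component $f_n : C_n \rightarrow D_n$ restricts to a homomorphism $f^\alt_n : C^\alt_n \rightarrow D^\alt_n$, and that the resulting family $(f^\alt_n)$ is a chain map.

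For the restriction, let $c \in C^\alt_n$ and $i \in \{1, \ldots, n-1\}$. Naturality of $f$ applied to the symmetry morphism $\sigma^n_i : n \rightarrow n$ in $\finCard$ gives the commutativity $\sigma^n_i \s f_n = f_n \s \sigma^n_i$ as maps $C_n \rightarrow D_n$. Since $c$ is alternating and $f_n$ is a group homomorphism, I can then compute
$$\sigma^n_i(f_n(c)) \;=\; f_n(\sigma^n_i(c)) \;=\; f_n(-c) \;=\; -f_n(c),$$
so $f_n(c) \in D^\alt_n$ as required. Thus $f_n$ restricts to a homomorphism $f^\alt_n : C^\alt_n \rightarrow D^\alt_n$.

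Next, the homomorphisms $f^\alt_n$ assemble into a chain map $f^\alt_\bullet : C^\alt_\bullet \rightarrow D^\alt_\bullet$. By Remark \ref{rmk:func_alt_cof_map_complex}, the composite $[\finCard, \ab] \rightarrow [\finOrd, \ab] \xrightarrow{(-)_\bullet} \cochain_+$ is already functorial, so $f_\bullet : C_\bullet \rightarrow D_\bullet$ commutes with the differentials $\partial_n$. Since $C^\alt_\bullet \hookrightarrow C_\bullet$ and $D^\alt_\bullet \hookrightarrow D_\bullet$ are subcomplex inclusions with injective components, and $f^\alt_n$ is by construction the restriction of $f_n$, the commutativity $\partial_n \s f^\alt_{n+1} = f^\alt_n \s \partial_n$ is inherited from the corresponding equation in $D_\bullet$.

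Functoriality (preservation of identities and of composition) is immediate from the componentwise definition together with the functoriality of the forgetful passage to cochain complexes. There is no real obstacle here; the only substantive point is the naturality-plus-homomorphism argument confirming that morphisms of symmetric cosimplicial abelian groups preserve alternating elements. Everything else reduces to routine bookkeeping built on top of the previously established functor $(-)_\bullet$.
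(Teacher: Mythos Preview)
Your proof is correct and follows essentially the same approach as the paper: the key step is the naturality computation $\sigma^n_i(f_n(c)) = f_n(\sigma^n_i(c)) = f_n(-c) = -f_n(c)$ showing that $f_n$ preserves alternating elements, after which the result follows from the functoriality of $(-)_\bullet$ established in Remark~\ref{rmk:func_alt_cof_map_complex}. The paper's proof is slightly terser, leaving the chain-map and functoriality verifications implicit, but the argument is the same.
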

\begin{proof}
Given a morphism of symmetric cosimplicial abelian groups $f:C \rightarrow D$, we claim that the associated morphism of chain complexes $f_\bullet:C_\bullet \rightarrow D_\bullet$ restricts to a morphism $f^\alt_\bullet:C^\alt_\bullet \rightarrow D^\alt_\bullet$ between the subcomplexes of alternating elements.  Indeed, given $c \in C^\alt_n \subseteq C_n$, the associated element $f_n(c)$ of $D_n$ is alternating, since for each $i \in \{1,...,n-1\}$, we compute that $\sigma^n_i(f_n(c)) = f_n(\sigma^n_i(c)) = f_n(-c) = -f_n(c)$ since $f$ is natural and $c$ is alternating.  The result now follows from \ref{rmk:func_alt_cof_map_complex}.
\end{proof}

\subsection{The complex of singular forms}

Again let us fix a subtractive differential object $E$ in a Cartesian tangent category $(\X,\T)$.  Letting $M$ be an object of $\X$, recall that $\Psi(M)$ denotes the symmetric cosimplicial abelian group of sector forms on $M$ (\ref{thm:symm_cosimpl_ab_grp_sector_forms}).

\begin{definition}\label{def:sing_nform_compl}\emptybox
\begin{enumerate}
\item We call a sector $n$-form $\omega$ on $M$ a \textbf{singular $n$-form} if $\omega$ is an alternating element of $\Psi(M)$.  Equivalently, a morphism $\omega:T^nM \rightarrow E$ is a singular $n$-form iff $\omega$ is a sector $n$-form and $c^n_iM \s \omega = -\omega$ for every $i \in \{1,...,n-1\}$, recalling that $c^n_i M = T^{i-1}cT^{n-i-1}M:T^n M \rightarrow T^n M$ is the symmetry carried by $T^{(-)}M$ (\ref{exa:cod_sets_ind_tngt_str}).
\item We denote the complex of alternating elements of $\Psi(M)$ (\ref{def:compl_alt_elts}) by
$$\Omega_\bullet(M) := (\Psi(M))_\bullet^\alt$$
and call it the \textbf{complex of singular forms} on $M$ with values in $E$.
\end{enumerate}
\end{definition}

Since $\Omega_\bullet(M)$ is a subcomplex of $\Psi_\bullet(M)$, the following theorem is now immediate:

\begin{theorem}
The exterior derivative $\partial\omega$ of a singular $n$-form on $M$ is a singular $(n+1)$-form.
\end{theorem}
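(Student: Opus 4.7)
The plan is to observe that this statement is a direct specialization of the general result, proved just before, that the alternating elements of any symmetric cosimplicial abelian group $C$ form a subcomplex $C^\alt_\bullet \hookrightarrow C_\bullet$. I would not prove anything new here; the combinatorial work was already carried out in the general setting using the coface-symmetry relations \eqref{eq:cofsymm_rels} and the parity-sign cancellation in the alternating sum defining the differential.

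First, I would recall that by Corollary \ref{thm:symm_cosimpl_ab_grp_sector_forms}, the graded abelian group $\Psi(M)$ of sector forms carries the structure of a symmetric cosimplicial abelian group $\Psi(M):\finCard \rightarrow \ab$, and by Definition \ref{def:compl_sector_forms} the associated cochain complex is exactly $\Psi_\bullet(M)$, whose differential is the exterior derivative $\partial$. Second, by Definition \ref{def:sing_nform_compl}, a singular $n$-form on $M$ is precisely an alternating element of $\Psi(M)$ in the sense of \ref{def:alt_elt}, and the complex of singular forms $\Omega_\bullet(M)$ is by definition the complex of alternating elements $(\Psi(M))^\alt_\bullet$.

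Hence if $\omega$ is a singular $n$-form, then $\omega \in \Psi^\alt_n(M)$, and the preceding theorem guarantees that $\partial\omega \in \Psi^\alt_{n+1}(M) = \Omega_{n+1}(M)$; that is, $\partial\omega$ is a singular $(n+1)$-form. There is no obstacle: the entire content of the statement has been absorbed into the general fact about subcomplexes of alternating elements, which is what motivated isolating that result at the previous level of generality.
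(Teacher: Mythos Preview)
Your proposal is correct and matches the paper's own argument exactly: the paper simply notes that since $\Omega_\bullet(M)$ is by definition the subcomplex $(\Psi(M))^\alt_\bullet$ of alternating elements of $\Psi_\bullet(M)$, the theorem is immediate from the general result that alternating elements form a subcomplex. You have unpacked this in slightly more detail, but the reasoning is identical.
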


\begin{proposition}\label{prop:complexOfSectorForms}
There is a functor
$$\Omega_\bullet\;:\;\X^\op \rightarrow \cochain_+\;\;\;\;\;\;\;\;M \mapsto \Omega_\bullet(M)$$
sending each object $M$ of $\X$ to the complex of singular forms on $M$ with values in $E$.
\end{proposition}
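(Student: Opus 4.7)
The plan is to observe that this proposition follows almost immediately by composing two functors that have already been constructed earlier in the paper. Specifically, by \ref{thm:func_sector_forms}(ii), since $E$ is subtractive, there is a functor
$$\Psi\;:\;\X^\op \rightarrow [\finCard,\ab],\;\;\;\;M \mapsto \Psi(M)$$
sending each object $M$ to the symmetric cosimplicial abelian group of sector forms on $M$ with values in $E$. By \ref{thm:func_compl_alt_elts}, there is a functor
$$(-)_\bullet^\alt\;:\;[\finCard,\ab] \rightarrow \cochain_+$$
sending a symmetric cosimplicial abelian group to its complex of alternating elements.

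First, I would form the composite functor
$$\Omega_\bullet \;:=\; \left(\X^\op \xrightarrow{\Psi} [\finCard,\ab] \xrightarrow{(-)^\alt_\bullet} \cochain_+\right).$$
Then I would verify on objects that this composite agrees with the prescribed assignment: for each object $M$ of $\X$, the image is $(\Psi(M))_\bullet^\alt$, which is precisely $\Omega_\bullet(M)$ by Definition \ref{def:sing_nform_compl}(ii). Functoriality is automatic from the fact that a composite of functors is a functor.

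There is essentially no obstacle here; the work has already been done in establishing \ref{thm:func_sector_forms} (where the naturality of $\Psi$ in $M$ was verified using the naturality of the transformations $a^n_j$, $\ell^n_i$, $c^n_i$ together with \ref{thm:fund_cof_pres_fincard}) and in \ref{thm:func_compl_alt_elts} (where one checks that a morphism of symmetric cosimplicial abelian groups restricts to a morphism on alternating subgroups, using the naturality of the symmetry maps $\sigma^n_i$). Hence the proof reduces to a one-line composition argument, and the main content of the proposition is really the accumulated structure of the preceding results rather than any new computation.
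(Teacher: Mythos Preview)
Your proposal is correct and matches the paper's own proof essentially verbatim: the paper simply says this follows from \ref{thm:func_sector_forms} and \ref{thm:func_compl_alt_elts}, which is exactly the composition $\X^\op \xrightarrow{\Psi} [\finCard,\ab] \xrightarrow{(-)^\alt_\bullet} \cochain_+$ you describe.
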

\begin{proof}
This follows from \ref{thm:func_sector_forms} and \ref{thm:func_compl_alt_elts}.
\end{proof}


\section{Relationship to de Rham in synthetic and classical differential geometry}\label{sec:relFormsClSDG}

\textit{Synthetic differential geometry} (SDG) is an approach to differential geometry in terms of infinitesimals that was initiated in a lecture of Lawvere in 1967 and developed by several authors, starting with work of Wraith and of Kock \cite{Kock:SimplAxDiff} in the 1970s.  The reader is referred to the books \cite{kock} and \cite{lavendhomme} for a comprehensive introduction to SDG.  An approach to differential forms in SDG was developed in \cite{Kock:DiffFormsSDG,KRV,MoeRey:CohThSDG} (see \cite{kock}, \cite{lavendhomme}), and in the present section we compare this work to the development of differential forms given above, recovering the classical de Rham complex of a smooth manifold as a corollary (\ref{thm:classical_derham}).  This comparison involves specializing our treatment of sector forms to the case in which the tangent structure is \textit{representable} (\ref{def:rep_tan}), an exercise that is illuminating in its own right.

In the most prevalent formulation of SDG, one begins with a topos $\E$ and a commutative ring object $R$ in $\E$, and then one defines $D$ to be the subobject of $R$ described by the equation $x^2 = 0$, so that $D$ is the part of $R$ that consists of square-zero `infinitesimal elements'.  Writing $[M,N]$ for the internal hom between objects $M$ and $N$ of $\E$, one construes the object $[D,M]$ as the space $TM$ of all tangent vectors on $M$.  One postulates that $R$ should satisfy the \textit{Kock-Lawvere} axiom (see section \ref{sec:compn_forms_sdg}), and sometimes further axioms, on the basis of which one can develop much differential geometry in $\E$.  One can define specific toposes $\E$ into which the category $\mf$ of smooth manifolds embeds, via an embedding $\mf \hookrightarrow \E$ that sends the real numbers $\R$ to $R$; see \cite{reyes} and \S\ref{sec:classical_derham} below.

The approach of defining $D$ as the square-zero part of $R$ was put forward in Kock's 1977 paper \cite{Kock:SimplAxDiff}, wherein it is indicated that Lawvere's 1967 lecture did not define $D$ in this way but rather postulated that an object $D$ of infinitesimals should exist and that $[D,M]$ for each object $M$ should have properties expected of the tangent bundle of $M$.  Evidently tangent categories provide an axiomatics of such properties, and indeed Rosick\'y's 1984 paper \cite{rosicky} considers in particular those tangent categories $(\X,\T)$ for which there is an exponentiable object $D$ with $T \cong [D,-]:\X \rightarrow \X$.  This leads to an axiomatics for structure and properties that should be possessed by an object of infinitesimals $D$ \cite[\S 4]{rosicky}, \cite[5.6]{sman3}.  The following definition was given in \cite{sman3} and is a variation on a similar definition given in \cite{rosicky}.  Herein, we say that an endofunctor $F$ on a category $\X$ with finite products is \textbf{representable} if it is isomorphic to an endofunctor of the form $[X,-]:\X \rightarrow \X$ for some exponentiable object $X$ of $\X$, and we then say that $F$ is represented by $X$.

\begin{definition}\label{def:rep_tan}
A category $\X$ carries \textbf{representable tangent structure} if $\X$ has finite products and carries a tangent structure $\T$ in which the endofunctors $T^n$ and $T_n$ $(n \in \N)$ are representable.
\end{definition}

It is proved in \cite[Prop. 5.7]{sman3} that a category $\X$ with finite products carries representable tangent structure if and only if there is an exponentiable object $D$ of $\X$ that carries the structure of an \textbf{infinitesimal object} in the sense of \cite[Def. 5.6]{sman3}.  In this case the tangent endofunctor $T$ is represented by $D$, and its iterates $T^n$ are represented by the $n$-th powers $D^n$ of $D$.  In particular, representable tangent structure is necessarily Cartesian in the sense of \ref{sec:cmons_in_tngnt_cat}.

Let us now fix a category $\X$ with representable tangent structure $\T$, represented by an infinitesimal object $D$ in $\X$.  Throughout, we shall assume without loss of generality that $T = [D,-]$ on the nose.  We shall not assume however that $T^n = [D^n,-]$ for $n > 1$, but rather we now define specific isomorphisms $T^n \cong [D^n,-]$ for use in the sequel.

\begin{definition}\label{def:isos_rep_it_tngnt}
Let us define isomorphisms
$$\psi_n:[D^n,-] \overset{\sim}{\longrightarrow} T^n\;\;\;\;\;\;(n \in \N)$$
by recursion on $n$, as follows.  Firstly, $\psi_0$ is defined as the canonical isomorphism from $[1,-]$ to the identity functor on $\X$.  Next, the components of $\psi_{n+1}$ are defined as the composites
$$\psi_{n+1}M := \left([D^{n+1},M] \xrightarrow{\phi_{n+1}M} [D,[D^n,M]] \xrightarrow{[D,\psi_nM]} [D,T^nM] = T^{n+1}M\right)\;\;\;(M \in \ob\X)$$
where $\phi_{n+1}M$ is the isomorphism whose transpose $[D^{n+1},M] \times D \rightarrow [D^n,M]$ is in turn defined as the transpose of the evaluation morphism 
$$[D^{n+1},M] \times D \times D^n = [D^{n+1},M] \times D^{n+1} \longrightarrow M.$$
Note that $\psi_1$ is therefore the identity transformation on $[D,-] = T$.
\end{definition}

\subsection{Simple type theory and lambda calculus}

In synthetic differential geometry one often makes use of the \textit{internal language} of a given topos $\E$ in order to define morphisms in $\E$ by means of `elementwise' formulae, to show that diagrams commute just by chasing elements, and so on; see, e.g., \cite[Part II]{kock}.  After all, the internal language of $\E$ is a restricted form of set theory, or rather \textit{higher-order intuitionistic type theory} \cite{LS}.

Even though our given tangent category $\X$ is not assumed to be a topos, it still possesses an internal language, albeit a rather restricted one, namely the \textbf{simple type theory} of $\X$ \cite[Chapter 2]{Jac}, considered as a category with finite products.  We now informally review some basic elements of this language and one of its extensions, the \textit{simply typed lambda calculus}; readers who are familiar with the latter may safely skip this section.  

Given any morphism $f:X_1 \times ... \times X_n \rightarrow Y$ in $\X$, we can form a \textit{typing judgment} or \textit{term-in-context}
$$x_1:X_1,...,x_n:X_n \;\;\vdash\;\; f(x_1,...,x_n)\;:\;Y$$
in which each expression $x_i:X_i$ indicates that $x_i$ is a formal variable of type $X_i$.  The typing judgment asserts that the expression $f(x_1,...,x_n)$ is a \textit{term} of type $Y$.  The part of the typing judgement to the left of the turnstile $\vdash$ is called the \textit{context}.  The simple type theory of $\X$ includes various \textit{term formation rules} which allow us to construct new terms-in-context from others \cite[2.1]{Jac}.  For example given terms-in-context $x:X \vdash f(x):Y$ and $y:Y \vdash g(y):Z$ associated to morphisms $f:X \rightarrow Y$ and $g:Y \rightarrow Z$ in $\X$, we can form a term-in-context $x:X \vdash g(f(x)):Z$.  Every term-in-context denotes an associated morphism in $\X$, and in particular, the latter term-in-context denotes the composite morphism $f \s g:X \rightarrow Z$.  The simple type theory of $\X$ carries also a calculus of \textit{equations}
$$x_1:X_1,...,x_n:X_n \;\;\vdash\;\; t_1 = t_2\;\;:\;\;Y$$
where $t_1, t_2$ are terms in the same context $\Gamma$, namely $x_1:X_1,...,x_n:X_n$, and we say that such an equation \textit{holds} in $\X$ if the morphisms in $\X$ denoted by $\Gamma \vdash t_1:Y$ and $\Gamma \vdash t_2:Y$ are equal.  
We shall often omit typing indications ``$y:Y$'' within terms-in-context and equations when the intended typing is clear.

Having assumed that $\X$ has an infinitesimal object $D$, which is exponentiable, we would also like to employ an internal language in reasoning about exponential transposition of morphisms.  In the case where $\X$ is Cartesian closed, we can employ the \textbf{simply typed lambda calculus} of $\X$ \cite[2.3]{Jac}\cite{LS}, which extends the simply type theory of $\X$ by adding term-formation rules corresponding to exponential transposition, together with corresponding rules governing equality.  For example given a morphism $f:X \times Y \rightarrow Z$ in $\X$, the associated transpose $X \rightarrow [Y,Z]$ is denoted by the term-in-context 
$$x:X \;\;\vdash\;\; \slambda y:Y.f(x,y)\;\;:\;\;[Y,Z]$$ 
where the construct ``$\slambda y:Y.$'' serves to bind the variable $y$ within the scope of the expression $\slambda y:Y.f(x,y)$.  We will often write just $\slambda y.f(x,y)$.

Although the given tangent category $\X$ is not assumed Cartesian closed, we can clearly\footnote{One way of justifying this claim is to embed $\X$ into the Cartesian closed category of presheaves on $\X$ taking values in some universe with respect to which $\X$ is small.  But clearly more elementary approaches are possible.} still employ simply typed lambda calculus and its interpretation in $\X$ as long as the instances of exponential transposition and evaluation employed are those permitted by the exponentiable objects $D^n$.  Given objects $X,Y,Z$ of $\X$ with $X,Y$ exponentiable, we shall write $f:[X,Y],\; g:[Y,Z] \vdash f \s g$ to denote the composition morphism $[X,Y] \times [Y,Z] \rightarrow [X,Z]$ in $\X$.

As a first application of this type-theoretic notation, we record the following:

\begin{proposition}\label{thm:isos_rep_it_tngnt}
For each $n \geqslant 1$ and each object $M$ of $\X$, the isomorphism $\psi_n:[D^n,M] \rightarrow T^nM = [D,[D,...]]$ defined in \ref{def:isos_rep_it_tngnt} is characterized by the following equation:
$$\tau:[D^n,M] \;\;\vdash\;\; \psi_n(\tau) = \slambda d_1.\slambda d_2.\:...\:\slambda d_n.\tau(d_1,d_2,...,d_n)\;\;:\;\;T^nM$$
\end{proposition}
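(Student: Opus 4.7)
The plan is to prove the claim by induction on $n \geqslant 1$. The equation essentially asserts that $\psi_n$ is the canonical iterated-currying isomorphism, so the proof amounts to unfolding the recursive definition of $\psi_n$ one layer at a time and appealing to the inductive hypothesis.

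For the base case $n = 1$, the transformation $\psi_1$ is by construction the identity on $T = [D,-]$, so $\psi_1(\tau) = \tau$, while the right-hand side $\slambda d_1.\tau(d_1)$ equals $\tau$ by the $\eta$-rule of the simply typed lambda calculus; hence the equation holds.

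For the inductive step, assume the formula for a given $n \geqslant 1$. The defining universal property of $\phi_{n+1}M$ in \ref{def:isos_rep_it_tngnt} says that its double exponential transpose is the evaluation morphism $[D^{n+1},M] \times D^{n+1} \to M$; translating this into the internal language (and identifying a point of $D^n$ with the tuple of its projections) gives $\tau:[D^{n+1},M] \;\vdash\; \phi_{n+1}(\tau) = \slambda d_1.\,\slambda e\!:\!D^n.\, \tau(d_1,e) \;:\; [D,[D^n,M]]$. Since $\psi_{n+1}M = \phi_{n+1}M \s [D,\psi_nM]$ by definition, and $[D,\psi_nM]$ acts by post-composition in the $D$-slot, we obtain $\psi_{n+1}(\tau) = \slambda d_1.\,\psi_n\bigl(\slambda e\!:\!D^n.\,\tau(d_1,e)\bigr)$. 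Applying the inductive hypothesis to the term $\slambda e\!:\!D^n.\,\tau(d_1,e):[D^n,M]$, with $d_1$ held as a free parameter, yields $\psi_n\bigl(\slambda e.\,\tau(d_1,e)\bigr) = \slambda d_2.\slambda d_3.\cdots\slambda d_{n+1}.\,\tau(d_1,d_2,\ldots,d_{n+1})$, and substituting this back produces precisely the desired formula for $\psi_{n+1}(\tau)$.

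The only genuine subtlety is methodological: lambda-calculus syntax must be interpreted in $\X$, which is not assumed Cartesian closed. However, as observed just above, every exponential used in the argument has the form $[D^k,-]$ with $D^k$ exponentiable, so this usage is legitimate in the restricted fragment of lambda calculus available in $\X$. Once that is granted, the induction is a routine symbolic computation; the main thing to watch is careful bookkeeping of bound variables (renaming the $n$ variables appearing in the inductive hypothesis to $d_2,\ldots,d_{n+1}$ to avoid clash with the freshly-bound $d_1$), and I anticipate no further obstacle.
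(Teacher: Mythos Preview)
Your proof is correct. The paper itself does not supply a proof of this proposition, treating it as an immediate consequence of the recursive definition of $\psi_n$ to be ``recorded'' once the lambda-calculus notation is in place; your induction is precisely the routine unfolding that justifies this, and matches what the paper implicitly relies on.
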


\subsection{Symmetric degenerative structure induced by $D$}

Again fixing a category $\X$ with representable tangent structure, the representing infinitesimal object $D$ carries a structural morphism $\odot:D \times D \rightarrow D$ that makes $D$ a commutative semigroup in $\X$ \cite[Def. 5.6]{sman3}.  Hence $D$ carries the structure of a symmetric semigroup $(D,\odot,s)$ in $\X$ (\ref{def:symm_semi_mon}, \ref{rem:comm_sgrp_yields_symm_sgrp}), where $s:D^2 \rightarrow D^2$ is the symmetry in $\X$.  Within the simple type theory of $\X$, we shall denote the multiplication $\odot$ by $d_1:D, d_2:D \vdash d_1d_2\::\:D$.

Recalling that the vertical lift $\ell:T \rightarrow T^2$ and the canonical flip $c:T^2 \rightarrow T^2$ together equip $T$ with the structure of a symmetric semigroup $(T,\ell,c)$ in $[\X,\X]^\op$ (\ref{sec:tng_func_symm_cosgrp}), it is clear from the discussion in \cite[\S 5.2]{sman3} that this symmetric semigroup structure is induced by the semigroup structure $(D,\odot,s)$ on $D$ in the sense that the following diagrams commute.
\begin{equation}\label{eq:sgrp_str_on_t_ind_by_that_on_d}
\xymatrix{
[D,-] \ar@{=}[d] \ar[r]^{[\odot,-]} & [D^2,-] \ar[d]^{\psi_2}_\wr & & [D^2,-] \ar[d]_{\psi_2}^\wr \ar[r]^{[s,-]} & [D^2,-] \ar[d]^{\psi_2}_\wr\\
T \ar[r]_\ell                       & T^2                         & & T^2     \ar[r]_{c}                   & T^2
}
\end{equation}

Whereas $\X$ is not a strict monoidal category, we can construe $(D,\odot,s)$ as a symmetric semigroup in a Cartesian strict monoidal category $\X_D$ that is defined as follows.  Define $\ob\X_D = \N$ and $\X_D(n,m) = \X(D^n,D^m)$, with composition as in $\X$.  Informally, we will write $D^n$ for the object $n$ of $\X_D$, noting that $\X_D$ is equivalent to the full subcategory of $\X$ on the objects $D^n$.  One encounters no complication in defining a Cartesian strict monoidal structure on $\X_D$, and it is for this reason that we work with $\X_D$ rather than the latter full subcategory of $\X$.

By \ref{thm:univ_symm_mon_sgrp}, the symmetric semigroup $(D,\odot,s)$ in $\X_D$ induces a strict monoidal functor
$$D^\sharp:\finCard_s \rightarrow \X_D,\;\;\;\;\;\;n \mapsto D^n,$$
and we will also write $D^\sharp$ to denote the functor $D^\sharp:\finCard_s \rightarrow \X$ obtained by composing with the canonical fully faithful functor $\X_D \rightarrow \X$.

\begin{proposition}\label{thm:dsharp}
\begin{enumerate}[{\rm (i)}]
\item The codegeneracies and symmetries carried by the symmetric codegenerative object $D^\sharp$ in $\X$ are the following morphisms, respectively:
$$\odot^n_i := D^{i-1}\times \odot \times D^{n-i}:D^{n+1} \rightarrow D^n,\;\;\;\;\;\;s^n_i := D^{i-1} \times s \times D^{n-i-1}:D^n \rightarrow D^n.$$
\item Writing $D^f:D^m \rightarrow D^n$ for the morphism in $\X$ induced by a mapping $f:n \rightarrow m$ between finite cardinals $n,m$, the functor $D^\sharp:\finCard_s \rightarrow \X$ sends each permutation $\xi:n \xrightarrow{\sim} n$ to the automorphism $D^{\xi^{-1}}:D^n \rightarrow D^n$ in $\X$.
\end{enumerate}
\end{proposition}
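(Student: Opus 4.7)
The plan is to handle the two parts in sequence, with part (i) following directly by unpacking definitions and part (ii) reducing, via a uniqueness argument on the symmetric-semigroup generator $\sigma$, to a check on a single transposition.

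For part (i), I would invoke the general description given in \ref{sec:symm_cod_obj_det_symm_sgrp} of the codegeneracies and symmetries carried by $S^\sharp$ for a symmetric semigroup $(S,m,s)$ in a strict monoidal category $\V$, applied to the symmetric semigroup $(D,\odot,s)$ in the Cartesian strict monoidal category $\X_D$. Because the monoidal product $\otimes$ on $\X_D$ is inherited from the Cartesian product on $\X$ (with $D^n \otimes D^m = D^{n+m}$ on objects and $f \otimes g = f \times g$ on arrows), the general formulas $\varepsilon^n_i = S^{i-1} m S^{n-i}$ and $\sigma^n_i = S^{i-1}sS^{n-i-1}$ from \ref{sec:symm_cod_obj_det_symm_sgrp} translate immediately into the stated morphisms $D^{i-1} \times \odot \times D^{n-i}$ and $D^{i-1} \times s \times D^{n-i-1}$ viewed as arrows in $\X$.

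For part (ii), the strategy is to exhibit a strict monoidal functor $F:\finCard_b \rightarrow \X_D$ given on objects by $F(n) := n$ and on each permutation $\xi:n \rightarrow n$ by $F(\xi) := D^{\xi^{-1}}$, and then to show that it agrees with the restriction of $D^\sharp$ along the inclusion $\finCard_b \hookrightarrow \finCard_s$. First I would verify functoriality: the assignment $f \mapsto D^f$ is a contravariant functor $\set^\op \rightarrow \X$, and inversion $\xi \mapsto \xi^{-1}$ is an identity-on-objects isomorphism $\finCard_b \rightarrow \finCard_b^\op$ (Remark \ref{rem:symm_grp_actions}), so composing these yields a covariant functor $F$. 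Next I would verify strict monoidality: for permutations $\xi:n \rightarrow n$ and $\eta:m \rightarrow m$ we have $(\xi + \eta)^{-1} = \xi^{-1} + \eta^{-1}$ and $D^{\xi^{-1} + \eta^{-1}} = D^{\xi^{-1}} \times D^{\eta^{-1}}$, so $F(\xi + \eta) = F(\xi) \otimes F(\eta)$ in $\X_D$. Finally, the universal symmetry $\sigma:2 \rightarrow 2$ in $\finCard_b$ is a transposition and hence self-inverse, while $D^\sigma$ is visibly the Cartesian symmetry on $D \times D$, namely $s$; by part (i) the restriction $D^\sharp|_{\finCard_b}$ also sends $\sigma$ to $s$. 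Invoking the uniqueness assertion of \ref{thm:univ_symm_mon_sgrp}(iii), $F$ and $D^\sharp|_{\finCard_b}$ coincide on all of $\finCard_b$, which is precisely the claim.

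The main obstacle, though minor, lies in keeping the directional bookkeeping straight: both the contravariance of $D^{(-)}$ and the anti-homomorphism property of inversion reverse composition order, and it is their simultaneous reversal that makes the candidate $F$ a covariant strict monoidal functor. Once this is in place, the proof concludes immediately by the universal property of $\finCard_b$ already recorded in \ref{thm:univ_symm_mon_sgrp}(iii).
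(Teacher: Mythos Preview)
Your proposal is correct and follows essentially the same approach as the paper: part (i) is read off from \ref{sec:symm_cod_obj_det_symm_sgrp}, and for part (ii) you construct the composite $\finCard_b \xrightarrow{(-)^{-1}} \finCard_b^\op \xrightarrow{D^{(-)}} \X_D$, observe that both it and $D^\sharp|_{\finCard_b}$ are strict monoidal functors sending $\sigma$ to $s$, and conclude by the uniqueness clause of \ref{thm:univ_symm_mon_sgrp}(iii). The paper's proof is the same, only more terse in asserting rather than verifying the strict monoidality of the composite.
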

\begin{proof}
(i) is immediate from \ref{sec:symm_cod_obj_det_symm_sgrp}.  For (ii) it suffices to show that the copermutative object $\finCard_b \rightarrow \X_D$ underlying $D^\sharp$ is equal to the composite
$$\finCard_b \xrightarrow{(-)^{-1}} \finCard_b^\op \xrightarrow{D^{(-)}} \X_D$$
whose first factor is the identity-on-objects isomorphism given in \ref{rem:symm_grp_actions}.  But these two functors $\finCard_b \rightarrow \X_D$ are both strict monoidal, and they both send $1$ to $D$ and send the symmetry $\sigma:2 \rightarrow 2$ to the symmetry $s:D^2 \rightarrow D^2$ in $\X_D$, so by \ref{thm:univ_symm_mon_sgrp}(iii) they are equal.
\end{proof}

\begin{proposition}\label{thm:symm_deg_ind_by_d}
For each object $M$ of $\X$, the isomorphisms $\psi_n:[D^n,M] \rightarrow T^nM$ constitute an isomorphism of symmetric degenerative objects
$$[D^\sharp(-),M] \cong T^{(-)}M\;\;:\;\;\finCard_s^\op \rightarrow \X\;.$$
\end{proposition}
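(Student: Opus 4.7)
The plan is to verify that the family $(\psi_n M)_{n \in \N}$ assembles into a natural transformation between the functors $[D^\sharp(-),M],\, T^{(-)}M : \finCard_s^\op \to \X$. Since each $\psi_n M$ is already an isomorphism in $\X$ by Definition~\ref{def:isos_rep_it_tngnt}, naturality automatically yields an isomorphism of symmetric degenerative objects. By Theorem~\ref{thm:fincards_gens_rels}, the category $\finCard_s$ is generated by the codegeneracies $\varepsilon^n_i$ and symmetries $\sigma^n_i$, so it suffices to verify naturality on these generators. Unpacking the images of these generators under the two functors via Proposition~\ref{thm:dsharp}(i) and Example~\ref{exa:symm_deg_it_tang}, the naturality identities take the form
\[
\psi_n M \s \ell^n_i M \;=\; [\odot^n_i,M] \s \psi_{n+1} M \qquad (1 \leqslant i \leqslant n)
\]
\[
\psi_n M \s c^n_i M \;=\; [s^n_i,M] \s \psi_n M \qquad (1 \leqslant i \leqslant n-1)
\]
in $\X$.

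To verify these, I plan to deploy the simply typed $\lambda$-calculus of $\X$. The two diagrams in \eqref{eq:sgrp_str_on_t_ind_by_that_on_d} translate into the explicit formulas $\ell_N(\sigma) = \slambda d.\slambda d'.\sigma(dd')$ and $c_N(\sigma) = \slambda d.\slambda d'.\sigma(d')(d)$ for $\sigma : TN$; repeatedly applying $[D,-] = T$ then yields analogous explicit descriptions of $\ell^n_i M = T^{i-1}(\ell_{T^{n-i}M})$ and $c^n_i M = T^{i-1}(c_{T^{n-i-1}M})$ as $\lambda$-terms acting ``in position $i$'' on $n$-fold iterated $D$-abstractions. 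Combining these with the closed-form expression for $\psi_n M(\tau)$ furnished by Proposition~\ref{thm:isos_rep_it_tngnt}, both sides of the codegeneracy identity reduce, after $\beta\eta$-conversion, to
\[
\slambda d_1.\, \slambda d_2.\, \cdots \slambda d_{n+1}.\, \tau(d_1,\ldots,d_{i-1},\, d_i d_{i+1},\, d_{i+2},\ldots,d_{n+1}),
\]
and a parallel calculation handles the symmetry identity, with the product $d_i d_{i+1}$ replaced by the transposition of $d_i$ and $d_{i+1}$ in the argument of $\tau$.

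The main obstacle is purely notational: the interplay between the contravariant $[-,M]$-action, the covariant iterates of $T$, and the exponential transposes $\phi_n M$ hidden in the recursive definition of $\psi_n M$ makes a direct categorical verification rather heavy, especially for $i \geqslant 2$, where the prefix $T^{i-1}$ must be carefully unpacked. The $\lambda$-calculus route sketched above neatly absorbs this bureaucracy. A purely categorical alternative proceeds by induction on $n$, using the factorisation $\psi_n M = \phi_n M \s [D,\psi_{n-1} M]$ from Definition~\ref{def:isos_rep_it_tngnt} to reduce each identity at $(n,i)$ with $i \geqslant 2$ to the image under $[D,-]$ of the corresponding identity at $(n-1,i-1)$, the base case $i = 1$ being handled directly by \eqref{eq:sgrp_str_on_t_ind_by_that_on_d} applied at the object $T^{n-1}M$; I would fall back on this route only if the $\lambda$-calculus calculation proves less transparent than expected.
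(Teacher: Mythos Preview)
Your proposal is correct and follows essentially the same approach as the paper: reduce to the generators $\varepsilon^n_i,\sigma^n_i$ via Theorem~\ref{thm:fincards_gens_rels}, identify their images under the two functors using Proposition~\ref{thm:dsharp}(i) and Example~\ref{exa:symm_deg_it_tang}, and then verify the resulting identities in the $\lambda$-calculus using \eqref{eq:sgrp_str_on_t_ind_by_that_on_d} and Proposition~\ref{thm:isos_rep_it_tngnt}, arriving at exactly the same closed-form expression. The paper's proof is simply a terser write-up of precisely this computation.
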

\begin{proof}
Since the codegeneracies $\varepsilon^n_i$ and symmetries $\sigma^n_i$ generate the category $\finCard_s$ (\ref{thm:fincards_gens_rels}), it suffices to show that the isomorphisms $\psi_n$ commute with the degeneracies and symmetries carried by $[D^\sharp(-),M]$ and $T^{(-)}M$.  By \ref{thm:dsharp}, the degeneracies and symmetries carried by $[D^\sharp(-),M]$ are the morphisms $[\odot^n_i,M]:[D^n,M] \rightarrow [D^{n+1},M]$ and $[s^n_i,M]:[D^n,M] \rightarrow [D^n,M]$, and by \ref{exa:symm_deg_it_tang}, \ref{exa:cod_sets_ind_tngt_str} those carried by $T^{(-)}M$ are (the components at $M$ of) the transformations $\ell^n_i = T^{i-1}\ell T^{n-i}$ and $c^n_i = T^{i-1}c T^{n-i-1}$.  We can now use \eqref{eq:sgrp_str_on_t_ind_by_that_on_d} and \ref{thm:isos_rep_it_tngnt} to compute that the following equations hold in $\X$:
$$\tau:[D^n,M] \vdash \ell^n_i(\psi_n(\tau)) = \slambda d_1.\:...\:\slambda d_{n+1}.\tau(d_1,...,d_{i-1},d_id_{i+1},d_{i+2},...,d_{n+1}) = \psi_{n+1}(\odot^n_i \s \tau)$$
$$\tau:[D^n,M] \vdash c^n_i(\psi_n(\tau)) = \slambda d_1.\slambda d_2.\:...\:\slambda d_n.\tau(d_1,...,d_{i-1},d_{i+1},d_i,d_{i+2},...,d_n) = \psi_n(s^n_i \s \tau)$$
\end{proof}

\subsection{Some infinitesimal left actions}\label{sec:infl_actions}

For each $n \in \N$ and each $j = 1,...,n$ we have a morphism $\alpha^n_j = \sigma^{n+1}_{(j)} \s \varepsilon^n_j:n+1 \rightarrow n$ in $\finCard_s$ (\ref{sec:rms_def_sector_form}).  Using \ref{thm:dsharp} and the definition of $\sigma^{n+1}_{(j)} = (j(j-1)...321)$ (\ref{def:notn_perms}), we compute that the morphism $D^\sharp(\sigma^{n+1}_{(j)}):D^{n+1} \rightarrow D^{n+1}$ carried by the symmetric codegenerative object $D^\sharp$ is characterized by
\begin{equation}\label{eq:dsharp_sigmaparensj}(d_0,d_1,...,d_n):D^{n+1} \;\vdash\; (D^\sharp(\sigma^{n+1}_{(j)}))(d_0,...,d_n) = (d_1,...,d_{j-1},d_0,d_j,d_{j+1},...,d_n)\;.\end{equation}
Again applying \ref{thm:dsharp} we therefore compute that the morphism
$$D^\sharp(\alpha^n_j):D \times D^n = D^{n+1} \rightarrow D^n$$
is characterized as follows:
$$(d_0,d_1,...,d_n):D^{n+1} \;\vdash\; (D^\sharp(\alpha^n_j))(d_0,...,d_n) = (d_1,...,d_{j-1},d_0d_j,d_{j+1},...,d_n)\;:\;D^n$$
In effect, $D^\sharp(\alpha^n_j)$ is the left action of $D$ on the $j$-th factor of $D^n$.

Now fixing a differential object $E$ in $\X$, the third axiom in \ref{def:diff_objs} entails that $E$ carries an associative action of $D$, namely the transpose $\bullet:D \times E \rightarrow E$ of the lift morphism $\lambda:E \rightarrow [D,E] = TE$ carried by $E$.  As with the multiplication carried by $D$, we will denote this left action on $E$ by juxtaposition within the lambda calculus.

\begin{definition}\label{def:synth_sf}
Let $M$ be an object of $\X$, and let $n \in \N$.
\begin{enumerate}[{\rm (i)}]
\item For each $j = 1,...,n$, let $\bullet^n_j:D \times [D^n,M] \rightarrow [D^n,M]$ denote the transpose of the composite
$$[D^n,M] \xrightarrow{[D^\sharp(\alpha^n_j),M]} [D^{n+1},M] \xrightarrow{\phi_{n+1}} [D,[D^n,M]]$$
where $\phi_{n+1}$ is the isomorphism defined in \ref{def:isos_rep_it_tngnt}.  Writing $\bullet^n_j$ in infix notation in the lambda calculus, this morphism $\bullet^n_j$ is characterized by the following equation:
$$d:D, \tau:[D^n,M]\;\vdash\; d \bullet^n_j \tau = \slambda (d_1,...,d_n).\tau(d_1,...,d_{j-1},dd_j,d_{j+1},...,d_n)$$
\item We say that a morphism $\nu:[D^n,M] \rightarrow E$ in $\X$ is a \textbf{synthetic sector $n$-form} on $M$ with values in $E$ if
\begin{equation}\label{eq:synth_sf_multilin_ax}d:D, \tau:[D^n,M] \;\vdash\; \nu(d \bullet^n_j \tau) = d\nu(\tau)\;\;:\;\;E\end{equation}
holds in $\X$ for each $j = 1,...,n$, i.e. if the following diagram commutes:
\begin{equation}\label{eq:diag_synth_sf}
\xymatrix{
D \times [D^n,M] \ar[d]_{\bullet^n_j} \ar[r]^(.55){D \times \nu} & D \times E \ar[d]^\bullet\\
[D^n,M] \ar[r]_\nu                                          & E
}
\end{equation}
\end{enumerate}
\end{definition}

\begin{theorem}\label{thm:symm_cosimpl_cmon_syn_sf}
For each object $M$ of $\X$, there is a symmetric cosimplicial commutative monoid $\Psi^\syn(M)$ in which $\Psi^\syn_n(M)$ is the set of all synthetic sector $n$-forms on $M$ with values in $E$ $(n \in \N)$.  Further, there is an isomorphism of symmetric cosimplicial commutative monoids
$$\Psi(M) \cong \Psi^\syn(M)$$
between sector forms on $M$ and synthetic sector forms on $M$.  The codegeneracies and symmetries carried by $\Psi^\syn(M)$ are given by pre-composition with the degeneracies and symmetries carried by $[D^\sharp(-),M]$ (\ref{thm:symm_deg_ind_by_d}).
\end{theorem}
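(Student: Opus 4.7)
The plan is to exhibit a bijection $\Phi_n:\Psi_n(M) \xrightarrow{\sim} \Psi^\syn_n(M)$ by precomposition with the representing iso $\psi_n:[D^n,M] \to T^nM$ for each $n$, and then transport the symmetric cosimplicial commutative monoid structure on $\Psi(M)$ (\ref{thm:sym_cosimpl_cmon_sector_forms}) along these.

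Define $\Phi_n(\omega) := \psi_n \s \omega$, yielding a bijection $\X(T^nM,E) \cong \X([D^n,M],E)$ at the level of all morphisms. The central step is to check that $\omega$ satisfies the sector form axiom $a^n_jM \s T\omega = \omega \s \lambda$ at index $j$ iff $\nu := \Phi_n(\omega)$ satisfies the synthetic sector form axiom \eqref{eq:synth_sf_multilin_ax} at $j$. For this, recall (\ref{sec:rms_def_sector_form}) that $a^n_j$ is the image under $T^{(-)}$ of $\alpha^n_j = \sigma^{n+1}_{(j)} \s \varepsilon^n_j$ in $\finCard_s$, so Proposition \ref{thm:symm_deg_ind_by_d} gives $\psi_n \s a^n_jM = [D^\sharp(\alpha^n_j),M] \s \psi_{n+1}$. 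Using $T = [D,-]$ on the nose (so $T\omega = [D,\omega]$) together with the recursion $\psi_{n+1} = \phi_{n+1} \s [D,\psi_n]$ from \ref{def:isos_rep_it_tngnt}, the left-hand side of the axiom composed with $\psi_n$ becomes
$$\psi_n \s a^n_jM \s T\omega = [D^\sharp(\alpha^n_j),M] \s \phi_{n+1} \s [D,\nu].$$
By the defining property of $\bullet^n_j$ (\ref{def:synth_sf}) and naturality of the transposition adjunction in its output, the exponential transpose of this morphism across $D$ is $\bullet^n_j \s \nu$. Dually, since $\lambda:E \to TE = [D,E]$ is the transpose of $\bullet:D\times E \to E$ (\ref{sec:infl_actions}), the transpose of $\psi_n \s \omega \s \lambda = \nu \s \lambda$ is $(D \times \nu) \s \bullet$ by naturality of transposition in its input. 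Thus the sector form axiom at $j$ is equivalent, after transposition, to $\bullet^n_j \s \nu = (D \times \nu) \s \bullet$, which is precisely \eqref{eq:synth_sf_multilin_ax}.

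With $\Phi_n:\Psi_n(M) \cong \Psi^\syn_n(M)$ established as a bijection of sets, regard each $\Psi^\syn_n(M)$ as a submonoid of $\X([D^n,M],E)$; since precomposition with $\psi_n$ is additive, $\Phi_n$ is then a monoid isomorphism and agrees with the transported structure. Equip $\Psi^\syn(M)$ with the unique symmetric cosimplicial commutative monoid structure making $\Phi:\Psi(M) \to \Psi^\syn(M)$ an isomorphism of such. To recover the explicit descriptions of the codegeneracies and symmetries claimed in the statement, apply the naturality identity $\psi_m \s T^{(-)}(f)M = [D^\sharp(f),M] \s \psi_n$ of Proposition \ref{thm:symm_deg_ind_by_d} to $f = \varepsilon^n_i$ and $f = \sigma^n_i$, yielding
$$\Phi_n(\ell^n_iM \s \omega) = [D^\sharp(\varepsilon^n_i),M] \s \Phi_{n+1}(\omega),\qquad \Phi_n(c^n_iM \s \omega) = [D^\sharp(\sigma^n_i),M] \s \Phi_n(\omega),$$
which, via \ref{thm:cod_symm_sector_forms}, identifies the codegeneracies and symmetries of $\Psi^\syn(M)$ with precomposition by the morphisms carried by $[D^\sharp(-),M]$.

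The principal technical hurdle is the adjunction bookkeeping in the central calculation: keeping straight two applications of the exponential adjunction (one defining $\bullet^n_j$ from $\phi_{n+1}$, the other defining $\bullet$ from $\lambda$) while using the recursion $\psi_{n+1} = \phi_{n+1} \s [D,\psi_n]$ to convert $T\omega$ into $[D,\nu]$. Once this matchup is carried out cleanly, the remaining verifications are formal consequences of the results of section \ref{sec:cosimpOfSectorForms}.
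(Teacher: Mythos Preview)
Your proposal is correct and follows essentially the same approach as the paper: set up the bijection $\omega \mapsto \psi_n \s \omega$, use the naturality $\psi_n \s a^n_jM = [D^\sharp(\alpha^n_j),M] \s \psi_{n+1}$ together with the recursion $\psi_{n+1} = \phi_{n+1} \s [D,\psi_n]$ to reduce the sector-form axiom to a morphism into $[D,E]$, and then observe that the two sides are exactly the exponential transposes of the two composites in diagram \eqref{eq:diag_synth_sf}. The paper records this last step as a single sentence about transposes, whereas you spell out the adjunction bookkeeping more explicitly, but the argument is the same; the transport of structure and the identification of codegeneracies and symmetries via Proposition \ref{thm:symm_deg_ind_by_d} also match the paper's treatment.
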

\begin{proof}
For each $n \in \N$, the isomorphism $\psi_n:[D^n,M] \rightarrow T^nM$ induces a bijection between morphisms $\nu:[D^n,M] \rightarrow E$ and morphisms $\omega:T^nM \rightarrow E$.  Given a pair of morphisms $\nu,\omega$ that correspond under this bijection, so that $\nu = \psi_n \s \omega$, we claim that $\nu$ is a synthetic sector $n$-form if and only if $\omega$ is a sector $n$-form.  To prove this, first observe that the following diagram commutes, by the inductive definition of the isomorphisms $\psi_n$ (\ref{def:isos_rep_it_tngnt}).
$$
\xymatrix{
[D^{n+1},M] \ar[dr]_{\psi_{n+1}} \ar[r]^{\phi_{n+1}}_\sim & [D,[D^n,M]] \ar[d]^{T\psi_n} \ar[dr]^{T\nu \:=\: [D,\nu]} & \\
                                                          & T^{n+1}M \ar[r]_(.45){T\omega}                & {[D,E] = TE}
}
$$
Next recall that $\omega$ is a sector $n$-form if and only if $a^n_j \s T\omega = \omega \s \lambda$ for all $j = 1,...,n$ (\ref{def:sector_nform}).  Here $a^n_j:T^nM \rightarrow T^{n+1}M$ is obtained by applying the functor $T^{(-)}M:\finCard_s^\op \rightarrow \X$ to the morphism $\alpha^n_j:n+1 \rightarrow n$ in $\finCard_s$ (\ref{sec:rms_def_sector_form}), so the naturality of the isomorphism $\psi:[D^\sharp(-),M] \xrightarrow{\sim} T^{(-)}M$ (\ref{thm:symm_deg_ind_by_d}) entails that $\psi_n \s a^n_j = [D^\sharp(\alpha^n_j),M] \s \psi_{n+1}:[D^n,M] \rightarrow T^{n+1}M$.  Using these facts we readily deduce that $\omega$ is a sector $n$-form if and only if the following diagram commutes for each $j = 1,...,n$.
$$
\xymatrix{
[D^n,M] \ar[d]_{[D^\sharp(\alpha^n_j),M]} \ar[rr]^\nu & & E \ar[d]^\lambda\\
[D^{n+1},M] \ar[r]_{\phi_{n+1}}^\sim & [D,[D^n,M]] \ar[r]_(.55){[D,\nu]} & [D,E]
}
$$
But the two composites in this diagram are the exponential transposes of the two composites in the diagram \eqref{eq:diag_synth_sf} whose commutativity characterizes synthetic sector $n$-forms.

Hence we have a bijection $\Psi_n(M) \xrightarrow{\sim} \Psi^\syn_n(M)$ given by $\omega \mapsto \psi_n \s \omega$, and since $\Psi_n(M)$ is a submonoid of $\X(T^nM,E)$ it follows that $\Psi^\syn_n(M)$ is a submonoid of $\X([D^n,M],E)$ and the given bijection is an isomorphism of commutative monoids.  Hence in view of \ref{thm:sym_cosimpl_cmon_sector_forms} there is a unique functor $\Psi^\syn(M):\finCard \rightarrow \cmon$ given on objects by $n \mapsto \Psi^\syn_n(M)$ such that the given isomorphisms $\Psi_n(M) \cong \Psi^\syn_n(M)$ are natural in $n \in \finCard$.  The naturality of these isomorphisms together with the naturality of the isomorphism $\psi:[D^\sharp(-),M] \xrightarrow{\sim} T^{(-)}M$ (\ref{thm:symm_deg_ind_by_d}) entails the remaining claim.
\end{proof}

\begin{proposition}\label{thm:eltwise_formula_ext_deriv}
Given a synthetic sector $n$-form $\nu:[D^n,M] \rightarrow E$ and any $i = 1,...,n+1$, the $i$-th coface of $\nu$ is the synthetic sector $(n+1)$-form $\delta^n_i(\nu):[D^{n+1},M] \rightarrow E$ characterized by the following equation:
$$\tau:[D^{n+1},M] \;\vdash\; (\delta^n_i(\nu))(\tau) = \hat{p}\bigl(\slambda d_0\colon D.\nu(\slambda(d_1,...,d_n)\colon D^n.\tau(d_1,...,d_{i-1},d_0,d_i,...,d_n))\bigr)$$
where $\hat{p}:[D,E] = TE \rightarrow E$ is the principal projection (\ref{sec:princ_projn}).
\end{proposition}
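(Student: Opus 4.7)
The plan is to transport the formula $\delta^n_i(\omega) = c^{n+1}_{(i)} M \s T\omega \s \hat{p}$ for the $i$-th coface of a sector form (Remark \ref{rmk:der_posi}) across the isomorphism $\Psi(M) \cong \Psi^\syn(M)$ of symmetric cosimplicial commutative monoids (Theorem \ref{thm:symm_cosimpl_cmon_syn_sf}), and then unpack the resulting composite in the lambda calculus.

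First I would set $\omega := \psi_n^{-1} \s \nu:T^nM \rightarrow E$ to be the sector $n$-form corresponding to $\nu$ under the isomorphism $\Psi_n(M) \cong \Psi^\syn_n(M)$. Since this isomorphism is a morphism of symmetric cosimplicial commutative monoids, the synthetic sector form $\delta^n_i(\nu)$ corresponds to $\delta^n_i(\omega)$ under the analogous isomorphism in degree $n+1$, giving
$$\delta^n_i(\nu) \;=\; \psi_{n+1} \s \delta^n_i(\omega) \;=\; \psi_{n+1} \s c^{n+1}_{(i)} M \s T\omega \s \hat{p}\;.$$

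Next I would rewrite the first two factors. By \ref{sec:rms_def_sector_form}, $c^{n+1}_{(i)}M$ is the image of the permutation $\sigma^{n+1}_{(i)}:n+1 \rightarrow n+1$ under the functor $T^{(-)}M:\finCard_s^\op \rightarrow \X$, so the naturality of the isomorphism $\psi:[D^\sharp(-),M] \xrightarrow{\sim} T^{(-)}M$ of Proposition \ref{thm:symm_deg_ind_by_d} at $\sigma^{n+1}_{(i)}$ yields
$$\psi_{n+1} \s c^{n+1}_{(i)}M \;=\; [D^\sharp(\sigma^{n+1}_{(i)}), M] \s \psi_{n+1}\;.$$
The third factor is handled exactly as in the naturality argument inside the proof of Theorem \ref{thm:symm_cosimpl_cmon_syn_sf}: by the inductive definition $\psi_{n+1} = \phi_{n+1} \s [D,\psi_n]$ (Definition \ref{def:isos_rep_it_tngnt}) and the identification $T = [D,-]$,
$$\psi_{n+1} \s T\omega \;=\; \phi_{n+1} \s [D,\psi_n] \s [D,\omega] \;=\; \phi_{n+1} \s [D,\psi_n \s \omega] \;=\; \phi_{n+1} \s [D,\nu]\;,$$
so that altogether $\delta^n_i(\nu) = [D^\sharp(\sigma^{n+1}_{(i)}), M] \s \phi_{n+1} \s [D,\nu] \s \hat{p}$.

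Finally I would translate this composite into the lambda calculus, applied to a generic term $\tau:[D^{n+1},M]$. By the explicit description \eqref{eq:dsharp_sigmaparensj} of $D^\sharp(\sigma^{n+1}_{(i)})$, precomposition of $\tau$ with this automorphism of $D^{n+1}$ yields $\slambda(d_0,d_1,\ldots,d_n)\colon D^{n+1}.\,\tau(d_1,\ldots,d_{i-1},d_0,d_i,\ldots,d_n)$; the transposition isomorphism $\phi_{n+1}$ then curries off $d_0$; postcomposition with $[D,\nu]$ inserts $\nu$ inside the remaining $\slambda d_0$; and postcomposition with $\hat{p}:TE \rightarrow E$ strips the outer $\slambda d_0$, producing precisely the stated expression. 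There is no serious obstacle beyond the careful bookkeeping of the transposition $\phi_{n+1}$ and the permutation $D^\sharp(\sigma^{n+1}_{(i)})$, which together realize the operation of ``inserting $d_0$ in the $i$th position'' of the argument tuple.
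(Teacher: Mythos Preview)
Your proposal is correct and follows essentially the same approach as the paper: transport the coface formula for sector forms across the isomorphism $\Psi(M)\cong\Psi^\syn(M)$, use the naturality of $\psi$ and the inductive description $\psi_{n+1}=\phi_{n+1}\s[D,\psi_n]$ to reduce to the composite $[D^\sharp(\sigma^{n+1}_{(i)}),M]\s\phi_{n+1}\s[D,\nu]\s\hat{p}$, and then read this off in the lambda calculus using \eqref{eq:dsharp_sigmaparensj}. The only difference is organizational: the paper first computes $\delta^n_1(\nu)$ and then applies the symmetry $\sigma^{n+1}_{(i)}$ on the synthetic side, whereas you start directly from the combined formula $\delta^n_i(\omega)=c^{n+1}_{(i)}M\s T\omega\s\hat{p}$ of Remark \ref{rmk:der_posi}; both routes amount to the same computation. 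One small phrasing point: applying $\hat{p}$ does not literally ``strip'' the outer $\slambda d_0$ (that would be evaluation), but your displayed formula is the correct one.
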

\begin{proof}
By \ref{thm:main_fund_coface_lemma} we know that $\delta^n_i(\nu) = \sigma^{n+1}_{(i)}(\delta^n_1(\nu))$ where $\sigma^{n+1}_{(i)}:\Psi^\syn_{n+1}(M) \rightarrow \Psi^\syn_{n+1}(M)$ is the automorphism induced by the permutation $\sigma^{n+1}_{(i)}:n+1 \rightarrow n+1$ in $\finCard$.

Given any synthetic sector $(n+1)$-form $\gamma:[D^{n+1},M] \rightarrow E$, we deduce by \ref{thm:symm_cosimpl_cmon_syn_sf} that $\sigma^{n+1}_{(i)}(\gamma)$ is the composite 
$$[D^{n+1},M] \xrightarrow{[D^\sharp(\sigma^{n+1}_{(i)}),M]} [D^{n+1},M] \overset{\gamma}{\longrightarrow} E,$$
so by using \eqref{eq:dsharp_sigmaparensj} we compute as follows:
\begin{equation}\label{eq:deriv_syn_sf1}\tau:[D^{n+1},M] \;\vdash\; (\sigma^{n+1}_{(i)}(\gamma))(\tau) = \gamma(\slambda (d_0,...,d_n).\tau(d_1,...,d_{i-1},d_0,d_i,...,d_n))\end{equation}

Let $\omega:T^nM \rightarrow E$ be the sector $n$-form corresponding to $\nu$, so that
$$\nu = \left([D^n,M] \xrightarrow{\psi_n} T^nM \xrightarrow{\omega} E\right).$$
In view of the proof of \ref{thm:symm_cosimpl_cmon_syn_sf}, $\delta^n_1(\nu):[D^{n+1},M] \rightarrow E$ is the composite 
$$[D^{n+1},M] \xrightarrow{\psi_{n+1}} T^{n+1}M \xrightarrow{\delta^n_1(\omega)} E$$
where $\delta^n_1(\omega) = T\omega \s \hat{p}$ is the fundamental derivative of $\omega$ (\ref{sec:fund_deriv_sf}, \ref{thm:sym_cosimpl_cmon_sector_forms}).  Hence since $\omega = \psi^{-1}_n \s \nu$ we compute that
$$\delta^n_1(\nu) \;=\; \psi_{n+1} \s T(\psi_n^{-1}) \s T\nu \s \hat{p} \;=\; \phi_{n+1} \s T\nu \s \hat{p} \;=\; \phi_{n+1} \s [D,\nu] \s \hat{p}$$
by the inductive definition of $\psi$ (\ref{def:isos_rep_it_tngnt}).  Hence we compute as follows:
$$\label{eq:formula_for_fund_deriv_synsf}\tau:[D^{n+1},M] \;\vdash\; (\delta^n_1(\nu))(\tau) = \hat{p}(\phi_{n+1}(\tau) \s \nu) = \hat{p}(\slambda d_0.\nu(\slambda (d_1,...,d_n).\tau(d_0,d_1,...,d_n)))$$
Applying this together with \eqref{eq:deriv_syn_sf1} in the case where $\gamma = \delta^n_1(\nu)$, we obtain the needed result.
\end{proof}

Now let us assume that $E$ is a subtractive differential object in $\X$, and again let $M$ be an object of $\X$.

\begin{definition}\label{def:synth_singf}
\begin{enumerate}
\item We call a synthetic sector $n$-form $\nu$ on $M$ a \textbf{synthetic singular $n$-form} if $\nu$ is an alternating element of the symmetric cosimplicial abelian group $\Psi^\syn(M)$ (\ref{def:alt_elt}).
\item We define the cochain complex $\Omega^\syn_\bullet(M)$ as the complex of alternating elements (\ref{def:compl_alt_elts}) of $\Psi^\syn(M)$, and we call $\Omega^\syn_\bullet(M)$ the \textbf{complex of synthetic singular forms} on $M$ with values in $E$.
\end{enumerate}
\end{definition}

By \ref{thm:symm_cosimpl_cmon_syn_sf} and \ref{thm:dsharp}, the symmetry $\sigma^n_i:\Psi^\syn_n(M) \rightarrow \Psi^\syn_n(M)$ $(i = 1,...,n-1)$ sends each synthetic sector $n$-form $\nu$ to the composite 
$$[D^n,M] \xrightarrow{[s^n_i,M]} [D^n,M] \overset{\nu}{\longrightarrow} E$$
in the notation of \ref{thm:dsharp}.  Hence we obtain the following:

\begin{proposition}
A morphism $\nu:[D^n,M] \rightarrow E$ is a synthetic singular $n$-form iff $\nu$ is a synthetic sector $n$-form and
\begin{equation}\label{eq:synth_alt_ax}\tau:[D^n,M] \;\vdash\; \nu\bigl(\lambda(d_1,...,d_n).\tau(d_1,...,d_{i-1},d_{i+1},d_i,d_{i+2},...,d_n)\bigr) \:=\: -\nu(\tau)\;\;:\;\;E\end{equation}
holds for each $i = 1,...,n-1$, where $-:E \rightarrow E$ is the negation morphism.
\end{proposition}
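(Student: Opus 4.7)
The plan is to unpack Definition \ref{def:synth_singf} and Definition \ref{def:alt_elt}, reducing the claim to an explicit description of the symmetries $\sigma^n_i$ acting on $\Psi^\syn_n(M)$, which is already available in the paragraph immediately preceding the proposition.

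First I would recall that, by Definition \ref{def:synth_singf}(i) and Definition \ref{def:alt_elt}, a synthetic sector $n$-form $\nu$ is a synthetic singular $n$-form if and only if $\sigma^n_i(\nu) = -\nu$ in the abelian group $\Psi^\syn_n(M)$ for every $i = 1,\ldots,n-1$. Then, by the observation cited just before the proposition (which combines Theorem \ref{thm:symm_cosimpl_cmon_syn_sf} with Proposition \ref{thm:dsharp}), the symmetry $\sigma^n_i$ acts on $\Psi^\syn_n(M)$ by precomposition with the morphism $s^n_i = D^{i-1}\times s \times D^{n-i-1}:D^n \rightarrow D^n$ in $\X$; that is, $\sigma^n_i(\nu) = [s^n_i,M]\s \nu$.

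Next I would translate this into the simple-type/lambda-calculus notation used in the statement. Since $s:D^2 \rightarrow D^2$ is the symmetry isomorphism of $\X$, the morphism $s^n_i$ swaps the $i$-th and $(i+1)$-th factors of $D^n$ and fixes all others. Under exponential transposition and the evaluation rules for lambda terms, precomposition with $s^n_i$ takes $\tau:[D^n,M]$ to $\slambda(d_1,\ldots,d_n).\tau(d_1,\ldots,d_{i-1},d_{i+1},d_i,d_{i+2},\ldots,d_n)$. Applying $\nu$ yields precisely the left-hand side of equation \eqref{eq:synth_alt_ax}. The condition $\sigma^n_i(\nu) = -\nu$ is then exactly the equation \eqref{eq:synth_alt_ax}, where $-:E \rightarrow E$ denotes the negation carried by the subtractive differential object $E$ (Definition \ref{def:diff_objs}).

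Since all steps are logical equivalences, the two directions of the proposition follow simultaneously. There is no real obstacle here: the proof is essentially bookkeeping, with the only mild subtlety being the correct reading of $[s^n_i,M]$ as a term-in-context formed by lambda abstraction and application. That step is justified by the standard interpretation of the simply typed lambda calculus in $\X$ (which applies here because the objects $D^n$ are exponentiable), together with the characterization of $\psi_n$ given in Proposition \ref{thm:isos_rep_it_tngnt}.
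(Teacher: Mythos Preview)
Your proposal is correct and matches the paper's approach exactly: the paper states the proposition as an immediate consequence of the preceding paragraph (``Hence we obtain the following:''), which is precisely the unpacking of definitions and the description of $\sigma^n_i$ via $[s^n_i,M]$ that you carry out. There is nothing to add.
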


\begin{theorem}\label{thm:compl_singf_iso_synsingf}
There is an isomorphism of cochain complexes
$$\Omega_\bullet(M) \cong \Omega^\syn_\bullet(M)$$
between the complex of singular forms and the complex of synthetic singular forms.
\end{theorem}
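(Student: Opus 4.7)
The plan is to obtain the desired isomorphism of cochain complexes by a functorial transport along an isomorphism of symmetric cosimplicial abelian groups that has already been established in the paper. Concretely, by Theorem \ref{thm:symm_cosimpl_cmon_syn_sf} there is an isomorphism $\Psi(M) \cong \Psi^\syn(M)$ in the category of symmetric cosimplicial commutative monoids, whose $n$-th component is the bijection $\omega \mapsto \psi_n \s \omega$ induced by the representability isomorphism $\psi_n \colon [D^n,M] \xrightarrow{\sim} T^nM$ of \ref{def:isos_rep_it_tngnt}. Since $E$ is assumed subtractive, both $\Psi(M)$ and $\Psi^\syn(M)$ take values in abelian groups (\ref{thm:symm_cosimpl_ab_grp_sector_forms}), so this is also an isomorphism in the category $[\finCard,\ab]$ of symmetric cosimplicial abelian groups.

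Next, I would invoke Proposition \ref{thm:func_compl_alt_elts}, which states that the formation of the complex of alternating elements is a functor
\[(-)_\bullet^\alt \;:\; [\finCard,\ab] \rightarrow \cochain_+\;.\]
Applying this functor to the isomorphism $\Psi(M) \cong \Psi^\syn(M)$ immediately yields an isomorphism of cochain complexes
\[\Omega_\bullet(M) \;=\; (\Psi(M))^\alt_\bullet \;\cong\; (\Psi^\syn(M))^\alt_\bullet \;=\; \Omega^\syn_\bullet(M)\]
by Definitions \ref{def:sing_nform_compl} and \ref{def:synth_singf}.

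There is really no obstacle here, as the heavy lifting has already been done: the proof that the bijections $\omega \mapsto \psi_n \s \omega$ assemble into a map of symmetric cosimplicial abelian groups was carried out in Theorem \ref{thm:symm_cosimpl_cmon_syn_sf}, and the functoriality of $(-)_\bullet^\alt$ was established in Proposition \ref{thm:func_compl_alt_elts}. The closest thing to a subtlety worth explicitly noting is that, because the isomorphism $\Psi(M) \cong \Psi^\syn(M)$ intertwines the symmetry operations carried by $\Psi(M)$ and $\Psi^\syn(M)$, it restricts in each degree to a bijection between the subgroups of alternating elements, so that the resulting map on complexes is indeed well-defined; but this is automatic from functoriality of $(-)^\alt_\bullet$ and is thus subsumed by the appeal to Proposition \ref{thm:func_compl_alt_elts}.
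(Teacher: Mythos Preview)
Your proof is correct and is essentially identical to the paper's own argument, which simply cites \ref{thm:symm_cosimpl_cmon_syn_sf}, \ref{thm:func_compl_alt_elts}, and \ref{def:sing_nform_compl}. You have merely spelled out the steps (and correctly added the reference to \ref{def:synth_singf} to identify $\Omega^\syn_\bullet(M)$ with $(\Psi^\syn(M))^\alt_\bullet$).
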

\begin{proof}
This follows immediately from \ref{thm:symm_cosimpl_cmon_syn_sf}, \ref{thm:func_compl_alt_elts}, and \ref{def:sing_nform_compl}.
\end{proof}

\subsection{Relationship to de Rham in SDG}\label{sec:compn_forms_sdg}

Let $\E$ be a topos equipped with a commutative ring object $R$.  Writing the multiplication in $R$ as juxtaposition in the lambda calculus, let $(-)^2,0:R \rightarrow R$ denote the morphisms in $\E$ denoted by the terms-in-context $x:R \vdash xx:R$ and $x:R \vdash 0:R$, respectively.  Let $D \hookrightarrow R$ denote the equalizer of these morphisms $(-)^2,0$.

An $R$-module object $E$ in $\E$ is said to be a \textbf{Kock-Lawvere $R$-module}, or satisfy the \textbf{Kock-Lawvere axiom}, if the morphism
\begin{equation}\label{eq:kl_compn_morph}E \times E \xrightarrow{\kappa} [D,E]\end{equation}
denoted by
$$(e_1,e_0):E \times E \;\vdash\; \slambda d:D\:.\:de_1 + e_0$$
is an isomorphism, where we have written the action of $R$ on $E$ as juxtaposition.  $R$ is said to be a \textbf{ring of line type} if $R$ itself satisfies the Kock-Lawvere axiom when considered as an $R$-module.

Let us now assume that $R$ is a ring of line type.  The books \cite[I.14]{kock}, \cite[IV]{reyes}, \cite[Ch. 4]{lavendhomme} treat differential forms valued in a Kock-Lawvere $R$-module $E$, following \cite{Kock:DiffFormsSDG,KRV,MoeRey:CohThSDG}.  We now recall the definition of the notion of differential form employed in the cited sections of these books.  We shall soon show that these are exactly the same as the synthetic singular forms defined in \ref{def:synth_singf} above.

\begin{definition}\label{def:sdg_sing_form}
Let $M$ be an object of $\E$, and let $n \in \N$.  For each $j = 1,...,n$, define a morphism $*^n_j:R \times [D^n,M] \rightarrow [D^n,M]$ by
$$r:R, \tau:[D^n,M]\;\vdash\; r *^n_j \tau = \slambda (d_1,...,d_n).\tau(d_1,...,d_{j-1},rd_j,d_{j+1},...,d_n)$$
where we have written $*^n_j$ in infix notation.  We say that a morphism $\nu:[D^n,M] \rightarrow E$ is an \textbf{SDG singular $n$-form} on $M$ with values in $E$ if the equation \eqref{eq:synth_alt_ax} holds for all $i = 1,...,n-1$ and the following equation holds for all $j = 1,...,n$:
\begin{equation}\label{eq:sdg_sing_form_multilin_ax}r:R, \tau:[D^n,M] \;\vdash\; \nu(r *^n_j \tau) = r\nu(\tau)\;\;:\;\;E\end{equation}
\end{definition}

\begin{remark}
Observe that the axioms for an SDG singular form are almost exactly the same as those for a synthetic singular form as defined in \ref{def:synth_singf} above, except that for SDG singular forms the axiom \eqref{eq:sdg_sing_form_multilin_ax} applies to arbitrary scalars $r:R$ rather than just $d:D$.  We shall show that these notions of form are identical in a suitable setting.  The key idea is as follows.
\end{remark}

\begin{proposition}\label{thm:pres_dact_implies_pres_ract}
If a morphism $h:E \rightarrow F$ in $\E$ between Kock-Lawvere $R$-modules $E,F$ satisfies the equation $d:D, e:E \vdash h(de) = dh(e)$, then $h$ satisfies the equation $r:R, e:E \vdash h(re) = rh(e)$.
\end{proposition}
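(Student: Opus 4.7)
The plan is to use the Kock-Lawvere axiom on $F$ as an uniqueness principle: if two elements of $F$ give rise to the same ``slope'' map $D \to F$ (with zero constant term), then they are equal. Concretely, I will show that the two candidate values $h(re)$ and $rh(e)$ induce equal maps $d \mapsto d\,h(re)$ and $d \mapsto d\,(rh(e))$ from $D$ to $F$, and then appeal to the injectivity of $\kappa_F \colon F \times F \to [D,F]$.

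Fix $r \colon R$ and $e \colon E$. First I would observe the key bookkeeping fact that for any $d \colon D$ and $r \colon R$ the product $dr$ again lies in $D$, since $(dr)^2 = d^2 r^2 = 0$. With this in hand, the idea is to evaluate the morphism $d \colon D \vdash h(dre) \colon F$ in two different ways using the hypothesis. On the one hand, applying the hypothesis to the element $re \colon E$ gives $h(d(re)) = d\,h(re)$. On the other hand, applying the hypothesis to the element $dr \colon D$ (paired with $e \colon E$) together with associativity of the $R$-action on $F$ gives $h((dr)e) = (dr)h(e) = d\,(rh(e))$. Hence $d\,h(re) = d\,(rh(e))$ as morphisms $D \to F$ in the variable $d$.

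Finally, I would invoke the Kock-Lawvere axiom on the $R$-module $F$: the map $\kappa_F \colon F \times F \to [D,F]$, $(f_1,f_0) \mapsto \slambda d.\,df_1 + f_0$, is an isomorphism, so in particular injective. Both sides of the equality above are of the form $d \mapsto d f_1 + 0$, so uniqueness of the Kock-Lawvere presentation forces $h(re) = rh(e)$, as required.

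The only subtle point is the observation that $dr$ again belongs to $D$, which is what allows the hypothesis to be reapplied a second time; once this is noticed, the rest is a routine appeal to Kock-Lawvere uniqueness, with no additivity or further structure on $h$ needed.
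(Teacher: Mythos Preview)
Your proof is correct and follows essentially the same approach as the paper's own proof: both observe that $dr \in D$, apply the $D$-equivariance hypothesis twice to obtain $d\,h(re) = h(dre) = (dr)h(e) = d\,(rh(e))$, and then invoke the injectivity of the Kock-Lawvere comparison map $\kappa_F$ (equivalently, of the ``slope'' map $f \mapsto \slambda d.\,df$) to conclude $h(re) = rh(e)$.
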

\begin{proof}
Let $\rho,\tau:R \times E \rightarrow F$ be the morphisms denoted by $r:R, e:E \vdash h(re)$ and $r:R, e:E \vdash rh(e)$, respectively.  Note that the multiplicative action $D \times R \rightarrow R$ factors through the inclusion $D \hookrightarrow R$ since $d:D, r:R \vdash (dr)^2 = d^2r^2 = 0r^2 = 0$ holds.  Denoting the resulting morphism $D \times R \rightarrow D$ by $d:D, r:R \vdash dr:D$, we compute that
$$d:D, r:R, e:E \vdash dh(re) = h(dre) = drh(e)\;:\;F$$
since $h$ preserves the actions by $D$.  Letting $\lambda:F \rightarrow [D,F]$ be the morphism denoted by $f:F \vdash \slambda d.df$, we compute that
$$r:R, e:E \vdash \lambda(\rho(r,e)) = (\slambda d.dh(re)) = (\slambda d.drh(e)) = \lambda(\tau(r,e))\;:\;[D,F]$$
so $\rho \s \lambda = \tau \s \lambda$.  But $\lambda$ is the composite $F \xrightarrow{(1_F,0)} F \times F \xrightarrow{\kappa} [D,F]$ in the notation of \eqref{eq:kl_compn_morph}, so $\lambda$ is a split monomorphism and hence $\rho = \tau$.
\end{proof}

\subsection{The tangent category of microlinear objects}\label{sec:tngt_cat_microlin_objs}

In order to be able to use certain results given in \cite{lavendhomme}, we shall now assume that $R$ satisfies the \textbf{Kock-Weil axiom (K-W)} of \cite[2.1.3]{lavendhomme}.  This axiom entails the above Kock-Lawvere axiom, and it also entails that the object $R$ of $\E$ is \textbf{microlinear} \cite[2.3.1]{lavendhomme} (that is, $R$ perceives finite quasi-colimits of infinitesimal objects as colimits).  

Let
$$\X \hookrightarrow \E$$
denote the full subcategory of $\E$ consisting of the microlinear objects, and let $\E_{iv} \hookrightarrow \E$ denote the full subcategory consisting of those objects that are both \textit{infinitesimally linear} and \textit{vertically linear} in the sense used in \cite[5.2, 5.3]{sman3}.  Note that $\X$ is contained in $\E_{iv}$.  By \cite[5.4]{sman3} $\E_{iv}$ carries representable tangent structure, with representing object $D$.  Both $\X$ and $\E_{iv}$ are closed under finite limits and exponentials in $\E$ (\cite[2.3.1]{lavendhomme}, \cite[5.4]{sman3}).  Hence the tangent structure on $\E_{iv}$ restricts to $\X$, and
$$\begin{minipage}{3.6in}\textit{$\X$ is a Cartesian closed category with representable tangent structure, represented by $D$.}\end{minipage}$$

Now let us fix an object $M$ of $\X$ and a Kock-Lawvere $R$-module $E$ that lies in $\X$.  By \cite[3.9]{diffBundles}, $E$ carries the structure of a subtractive differential object in $\X$, where the associated lift morphism $\lambda:E \rightarrow TE = [D,E]$ is the transpose of the restricted action $D \times E \rightarrow E$, so that the latter is the morphism written as $\bullet$ in \ref{sec:infl_actions}.

Given any object $X$ of $\E$, let $X^*:\E = \E \slash 1 \rightarrow \E \slash X$ denote the functor given by pullback along $!:X \rightarrow 1$.  Since $X^*$ is a logical functor between toposes \cite[1.42]{Joh:TT}, $X^*$ sends $R$ to a ring of line type $X^*(R) = (\pi_2:R \times X \rightarrow X)$ in $\E \slash X$, and $X^*$ sends $E$ to a Kock-Lawvere $X^*(R)$-module $X^*(E)$ in $\E \slash X$.

\begin{proposition}\label{thm:kl_rmods_structs_it_tan}
\emptybox
\begin{enumerate}[{\rm (i)}]
\item Given any object $X$ of $\X$, the tangent bundle $(TX,p_X)$ carries the structure of a Kock-Lawvere $X^*(R)$-module in $\E \slash X$.
\item For each $n \in \N$ and each $j = 1,...,n+1$, the morphism 
$$p_j:[D^{n+1},M] \rightarrow [D^n,M]$$
given by
$$\tau:[D^{n+1},M] \;\vdash\; p_j(\tau) = \slambda (d_1,...,d_n).\tau(d_1,...,d_{j-1},0,d_j,...,d_n)$$
carries the structure of a Kock-Lawvere $([D^n,M])^*(R)$-module in $\E \slash [D^n,M]$.  The associated action of $([D^n,M])^*(R)$ is given by the morphism
$$*^{n+1}_j:R \times [D^{n+1},M] \rightarrow [D^{n+1},M]$$
of \ref{def:sdg_sing_form}.
\end{enumerate}
\end{proposition}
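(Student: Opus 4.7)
The plan is to derive both parts from standard facts in synthetic differential geometry, combining the Kock-Weil axiom on $R$ with microlinearity of the objects at hand.

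For part (i), I would equip $(TX, p_X)$ with an $X^*(R)$-module structure by combining the tangent category's additive bundle structure (the fibred addition $+_X \colon TX \times_X TX \to TX$ and the zero section $0_X \colon X \to TX$) with a fibrewise scalar action $X^*(R) \times_X TX \to TX$ which in the internal language of $\E$ is $(r, v) \mapsto \slambda d\colon D.v(rd)$. That the $R$-module axioms hold fibrewise is a well-known consequence of the Kock-Weil axiom together with microlinearity of $X$ (cf.~\cite{lavendhomme}). For the Kock-Lawvere property, we must check that the comparison morphism $TX \times_X TX \to [X^*(D), TX]_{\E/X}$ sending $(v_1, v_0)$ to $\slambda d.dv_1 + v_0$ is an isomorphism in $\E/X$. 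In the representable setting this comparison coincides (modulo the identification $\psi_2$) with the map $v \colon T_2 X \to T^2 X$ of \ref{defnTangentCategory}, and the universality-of-vertical-lift axiom---which asserts that the associated square is a pullback along $0_X \colon X \to TX$---is precisely what the Kock-Lawvere property in $\E/X$ demands.

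For part (ii) with $j = 1$, I would invoke the exponential transpose isomorphism $\phi_{n+1}$ of \ref{def:isos_rep_it_tngnt} to identify $[D^{n+1}, M]$ with $[D, [D^n, M]] = T[D^n, M]$. A direct unwinding shows that $\phi_{n+1}$ intertwines $p_1$ with the tangent projection $p_N \colon TN \to N$, where $N := [D^n, M]$ lies in $\X$ since microlinearity is preserved by exponentials. Part (i) applied to $N$ then transports along $\phi_{n+1}^{-1}$ to give $p_1$ a Kock-Lawvere $N^*(R)$-module structure, and an internal-language computation---using that the scalar action on $TN = [D, N]$ is $(r, \hat\tau) \mapsto \slambda d.\hat\tau(rd)$---shows that the resulting action is the map $r *^{n+1}_1 \tau$ of \ref{def:sdg_sing_form}. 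For arbitrary $j$, I would replace $\phi_{n+1}$ by the isomorphism $\Phi_j \colon [D^{n+1}, M] \to T[D^n, M]$ obtained by precomposition with the permutation $\iota_j \colon D \times D^n \to D^{n+1}$ inserting the first factor into the $j$-th slot, followed by exponential transposition. One readily checks $p_N \circ \Phi_j = p_j$, so transport of structure along $\Phi_j$ yields the required Kock-Lawvere $([D^n, M])^*(R)$-module structure on $p_j$; the analogous internal computation, carefully tracking the permutation, identifies the scalar action as $(r, \tau) \mapsto \slambda(d_1, \ldots, d_{n+1}).\tau(d_1, \ldots, d_{j-1}, rd_j, d_{j+1}, \ldots, d_{n+1}) = r *^{n+1}_j \tau$.

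The main obstacle lies in part (i): one must unwind the definition $v = \<\pi_1 \s \ell, \pi_2 \s 0_T\>\s T(+)$ in the representable setting to see that it agrees with the Kock-Lawvere comparison, and interpret the pullback condition in $\E$ as the Kock-Lawvere isomorphism in the slice $\E/X$. This is largely a bureaucratic exercise in the internal language, but requires care in bookkeeping between $\E$ and $\E/X$. Once part (i) is secured, part (ii) is essentially a corollary of the identifications $[D^{n+1}, M] \cong T[D^n, M]$, one for each distinguished slot $j \in \{1, \ldots, n+1\}$.
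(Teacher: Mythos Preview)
Your approach is correct and essentially the same as the paper's.  For (i) the paper simply cites \cite[3.1.2, Prop.~4]{lavendhomme} rather than unwinding the vertical-lift axiom, so your sketch is extra detail rather than a different route; for (ii) the paper also reduces to (i) via $\phi_{n+1}$ for $j=1$ and then transports along the automorphism $\Sigma_{(j)} = [D^\sharp(\sigma^{n+1}_{(j)}),M]$ of $[D^{n+1},M]$ to reach general $j$, which is exactly your $\Phi_j$ factored as $\Sigma_{(j)}\s\phi_{n+1}$.
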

\begin{proof}
(i) is established in \cite[3.1.2, Prop. 4]{lavendhomme}, noting that Lavendhomme's universal quantification over points of the base space is interpreted in an internal sense in $\E$, cf. \cite[II.6]{kock}.  Letting $X = [D^n,M]$ and $Y = [D^{n+1},M]$, we have an isomorphism $\phi_{n+1}:Y \xrightarrow{\sim} [D,X] = TX$ (\ref{def:isos_rep_it_tngnt}) that commutes with the projections $p_1$ and $p_X$ to $X$.  Hence by (i), $(Y,p_1)$ carries the structure of a Kock-Lawvere $X^*(R)$-module in $\E \slash X$, and in view of \cite[3.1.1]{lavendhomme} the associated action of $X^*(R)$ is $*^{n+1}_1$.  By \eqref{eq:dsharp_sigmaparensj} we have an automorphism $\Sigma_{(j)} = [D^\sharp(\sigma^{n+1}_{(j)}),M]:Y \xrightarrow{\sim} Y$ in $\X \hookrightarrow \E$ characterized by
$$\tau:[D^{n+1},M] \;\vdash\; \Sigma_{(j)}(\tau) = \slambda (d_1,...,d_{n+1}).\tau(d_2,...,d_j,d_1,d_{j+1},...,d_{n+1})\;.$$
Observe that $\Sigma_{(j)}$ is in fact an isomorphism $\Sigma_{(j)}:(Y,p_j) \xrightarrow{\sim} (Y,p_1)$ in $\E \slash X$.  Hence there is a unique $X^*(R)$-module structure on $(Y,p_j)$ such that $\Sigma_{(j)}$ is an isomorphism of $X^*(R)$-modules $(Y,p_j) \xrightarrow{\sim} (Y,p_1)$.  Hence $(Y,p_j)$ is a Kock-Lawvere $X^*(R)$-module, and the result follows.
\end{proof}

\begin{theorem}\label{thm:sdgsing_vs_synthsing}
A morphism $\nu:[D^n,M] \rightarrow E$ is an SDG singular $n$-form if and only if $\nu$ is a synthetic singular $n$-form in the sense of \ref{def:synth_singf}.
\end{theorem}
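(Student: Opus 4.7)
The two notions share the alternating axiom \eqref{eq:synth_alt_ax} verbatim, so it suffices to show that the multilinearity conditions \eqref{eq:synth_sf_multilin_ax} (over $D$) and \eqref{eq:sdg_sing_form_multilin_ax} (over $R$) are equivalent for any morphism $\nu:[D^n,M] \to E$. Since $D \hookrightarrow R$ and the expressions $d \bullet^n_j \tau$ and $r *^n_j \tau$ are defined by the same formula when $r = d$, the SDG axiom restricts to the synthetic one, giving one direction immediately.

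For the converse, suppose $\nu$ satisfies \eqref{eq:synth_sf_multilin_ax}. I would imitate the proof of Proposition \ref{thm:pres_dact_implies_pres_ract}. Fixing $j \in \{1,\ldots,n\}$, introduce morphisms $F,G:R \times [D^n,M] \to E$ defined by $F(r,\tau) = \nu(r *^n_j \tau)$ and $G(r,\tau) = r\nu(\tau)$. The SDG multilinearity \eqref{eq:sdg_sing_form_multilin_ax} is then the assertion $F = G$, and since the lift $\lambda:E \to [D,E]$ of the differential object $E$ is a split monomorphism with retraction $\hat{p}$ by Proposition \ref{prop:diffObjs}(ii), it suffices to prove $F \s \lambda = G \s \lambda$.

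The key calculation rests on two observations. First, unwinding the definitions of $\bullet^n_j$ and $*^n_j$ and invoking the commutativity of $R$ yields the internal-language identity $d \bullet^n_j (r *^n_j \tau) = (dr) *^n_j \tau$. Second, $(dr)^2 = d^2 r^2 = 0$, so $dr$ factors through $D \hookrightarrow R$. Two applications of the hypothesis \eqref{eq:synth_sf_multilin_ax} then give
$$d\,\nu(r *^n_j \tau) \;=\; \nu\bigl(d \bullet^n_j (r *^n_j \tau)\bigr) \;=\; \nu\bigl((dr) *^n_j \tau\bigr) \;=\; (dr)\,\nu(\tau) \;=\; d\,G(r,\tau),$$
whence $\lambda(F(r,\tau)) = \slambda d.\,d\,\nu(r *^n_j \tau) = \slambda d.\,d\,G(r,\tau) = \lambda(G(r,\tau))$, as required. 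The only subtle step is recognizing that $dr \in D$ so that the $D$-multilinearity hypothesis can be reapplied on the ring parameter $r$; once this is spotted the remainder is formal manipulation modelled directly on Proposition \ref{thm:pres_dact_implies_pres_ract}.
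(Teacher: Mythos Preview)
Your proof is correct and rests on the same mathematical idea as the paper's: since $dr \in D$ whenever $d \in D$ and $r \in R$, $D$-equivariance of $\nu$ yields $d\,\nu(r *^n_j \tau) = (dr)\,\nu(\tau) = d\,(r\nu(\tau))$, and the Kock-Lawvere property of $E$ lets one cancel the $d$. The packaging, however, differs. The paper passes to the slice topos $\E/[D^{n-1},M]$, invokes Proposition~\ref{thm:kl_rmods_structs_it_tan} to equip $([D^n,M],p_j)$ with the structure of a Kock-Lawvere module over that base, and then applies Proposition~\ref{thm:pres_dact_implies_pres_ract} as a black box to the induced morphism $\bar{\nu} = (\nu,p_j)$. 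Your argument instead inlines the content of Proposition~\ref{thm:pres_dact_implies_pres_ract} directly, working with $F,G:R\times[D^n,M]\to E$ and using only that $\lambda:E\to[D,E]$ is split monic. This sidesteps the slice-topos machinery and the need to verify the Kock-Lawvere condition on $[D^n,M]$; the paper's route is more modular but leans on more infrastructure.
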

\begin{proof}
If $n = 0$ then the result is immediate, so we may assume that $n \geqslant 1$.  Since the morphisms $\bullet^n_j$ of \ref{def:synth_sf} are restrictions of the morphisms $*^n_j$ of \ref{def:sdg_sing_form}, it suffices to assume that $\nu$ is a synthetic singular $n$-form and show that \eqref{eq:sdg_sing_form_multilin_ax} holds for each $j \in \{1,...,n\}$.  Letting $Y = [D^n,M]$ and $X = [D^{n-1},M]$, we deduce by \ref{thm:kl_rmods_structs_it_tan} that $(Y,p_j)$ is a Kock-Lawvere $X^*(R)$-module with action morphism $*^n_j:R \times Y \rightarrow Y$.  Also $X^*(E) = (E \times X,\pi_2)$ is a Kock-Lawvere $X^*(R)$-module, and $\nu$ induces a morphism $\bar{\nu} := (\nu,p_j):(Y,p_j) \rightarrow X^*(E)$ in $\E \slash X$.  Note that since $X^*$ preserves limits, $X^*$ sends $D \hookrightarrow R$ to the square zero part $X^*(D) \hookrightarrow X^*(R)$ of $X^*(R)$.  But the axiom \eqref{eq:synth_sf_multilin_ax} entails that $\bar{\nu}$ preserves the $X^*(D)$-actions carried by the Kock-Lawvere $X^*(R)$-modules $(Y,p_j)$ and $X^*(E)$, so $\bar{\nu}$ preserves the $X^*(R)$-actions by \ref{thm:pres_dact_implies_pres_ract}.  Hence \eqref{eq:sdg_sing_form_multilin_ax} holds.
\end{proof}

\begin{theorem}\label{thm:singf_vs_sdgsingf}
The complex of singular forms $\Omega_\bullet(M)$ is isomorphic to the complex $\Omega^{\textnormal{sdg}}_\bullet(M)$ of SDG singular forms on $M$ with values in $E$, as defined in \cite{Kock:DiffFormsSDG,KRV}, \cite[Ch. 4]{lavendhomme}.  Further, $\Omega^{\textnormal{sdg}}_\bullet(M)$ is identical to the complex of synthetic singular forms $\Omega^\syn_\bullet(M)$.
\end{theorem}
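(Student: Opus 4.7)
The claim decomposes into two parts: first, that $\Omega^{\textnormal{sdg}}_\bullet(M)$ and $\Omega^\syn_\bullet(M)$ are literally equal as cochain complexes; second, that the result combines with Theorem \ref{thm:compl_singf_iso_synsingf} to yield an isomorphism with $\Omega_\bullet(M)$. The plan is to establish the identity of underlying graded abelian groups and then verify that the two exterior derivatives coincide on the nose. The isomorphism with $\Omega_\bullet(M)$ then follows immediately by Theorem \ref{thm:compl_singf_iso_synsingf}.

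For the equality of underlying graded abelian groups, I would invoke Theorem \ref{thm:sdgsing_vs_synthsing} directly: a morphism $\nu:[D^n,M]\to E$ satisfies the SDG multilinearity axiom \eqref{eq:sdg_sing_form_multilin_ax} if and only if it satisfies the infinitesimal version \eqref{eq:synth_sf_multilin_ax}, and the alternating condition \eqref{eq:synth_alt_ax} is the same in both definitions. Hence $\Omega^{\textnormal{sdg}}_n(M) = \Omega^\syn_n(M)$ literally, as subsets of $\X([D^n,M],E)$, and clearly the subgroup structure coming from pointwise addition is the same.

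The core step is to verify that the two differentials coincide. The differential on $\Omega^\syn_\bullet(M)$ is the alternating sum $\sum_{i=1}^{n+1}(-1)^{i-1}\delta^n_i$ whose $i$-th summand is given elementwise by Proposition \ref{thm:eltwise_formula_ext_deriv}:
\[
(\delta^n_i(\nu))(\tau) \;=\; \hat{p}\bigl(\slambda d_0.\;\nu(\slambda (d_1,\ldots,d_n).\tau(d_1,\ldots,d_{i-1},d_0,d_i,\ldots,d_n))\bigr).
\]
In the SDG references \cite{Kock:DiffFormsSDG,KRV} and \cite[Ch. 4]{lavendhomme}, the exterior derivative of a singular $n$-form $\nu$ is defined by inserting a fresh infinitesimal $d_0$ into position $i$, precomposing $\nu$, and then extracting the \emph{principal part} of the resulting map $D \to E$ using the Kock--Lawvere decomposition $E \times E \xrightarrow{\sim} [D,E]$ of \eqref{eq:kl_compn_morph}, with the same alternating sign $(-1)^{i-1}$. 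Because the differential object structure on $E$ is the one associated to the Kock-Lawvere $R$-module structure via \cite[3.9]{diffBundles}, with lift $\lambda$ transpose of $D \times E \to E$, the principal projection $\hat{p}:TE \to E$ of \ref{sec:princ_projn} is exactly the inverse of this Kock-Lawvere isomorphism followed by projection onto the coefficient of $d_0$. Hence the two position-$i$ derivatives agree term by term, and therefore $\partial^{\syn} = \partial^{\textnormal{sdg}}$ as maps $\Omega^\syn_n(M) \to \Omega^\syn_{n+1}(M)$.

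The main obstacle is genuinely only bookkeeping: matching the indexing of positions (SDG texts often index from $0$ to $n$ rather than $1$ to $n+1$, so one must take care with the cyclic insertion) and signs between our Proposition \ref{thm:eltwise_formula_ext_deriv} and the formulas given in \cite[Ch. 4]{lavendhomme}. Once this identification is made, the conclusion is automatic: the equality $\Omega^{\textnormal{sdg}}_\bullet(M) = \Omega^\syn_\bullet(M)$ of cochain complexes is established, and composing with the isomorphism $\Omega_\bullet(M) \cong \Omega^\syn_\bullet(M)$ of Theorem \ref{thm:compl_singf_iso_synsingf} yields the desired isomorphism $\Omega_\bullet(M) \cong \Omega^{\textnormal{sdg}}_\bullet(M)$.
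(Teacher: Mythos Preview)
Your proposal is correct and follows essentially the same approach as the paper: invoke Theorem \ref{thm:sdgsing_vs_synthsing} to identify the underlying graded abelian groups, compare the differentials term by term via the elementwise formula of Proposition \ref{thm:eltwise_formula_ext_deriv} against the SDG exterior derivative as given in \cite{KRV} and \cite[4.2.3]{lavendhomme}, and then apply Theorem \ref{thm:compl_singf_iso_synsingf} for the isomorphism with $\Omega_\bullet(M)$. Your additional remark explaining why $\hat{p}$ coincides with the Kock--Lawvere principal-part extraction is a helpful elaboration that the paper leaves implicit.
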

\begin{proof}
By \ref{thm:compl_singf_iso_synsingf}, it suffices to prove the second claim.  By \ref{thm:sdgsing_vs_synthsing}, the complexes $\Omega^{\textnormal{sdg}}_\bullet(M)$ and $\Omega^\syn_\bullet(M)$ have the same underlying graded abelian group.  The differential $\partial_n:\Omega^\syn_n(M) \rightarrow \Omega^\syn_{n+1}(M)$ is given by $\partial_n(\nu) = \sum_{i=1}^{n+1}(-1)^{n-1}\delta^n_i(\nu)$, and by consulting the formula for $\delta^n_i(\nu)$ in \ref{thm:eltwise_formula_ext_deriv} we find that $\partial_n(\nu)$ is precisely the exterior derivative of $\nu$ described in \cite[\S 1, (1.3)]{KRV} and \cite[4.2.3, Prop. 4]{lavendhomme}.
\end{proof}

\subsection{Relationship to de Rham for smooth manifolds}\label{sec:classical_derham}

Let $\mf$ denote the category of (Hausdorff, second-countable) smooth manifolds, and let $M$ be an object of $\mf$.  Since $\mf$ is a Cartesian tangent category and $\R$ is a subtractive differential object in $\mf$, we can consider the complex of singular forms $\Omega_\bullet(M)$ on $M$ with values in $\R$ (\ref{def:sing_nform_compl}).  In order to show that $\Omega_\bullet(M)$ is isomorphic to the classical de Rham complex of $M$\footnote{The relationship between singular forms and classical differential forms is also briefly discussed in an exercise in White's book; see \cite[pg. 116, exercise 7]{white}.}, we shall consider an embedding of $\mf$ into a topos $\E$ modelling SDG, and then we shall invoke \ref{thm:singf_vs_sdgsingf} and a result on differential forms within this specific topos \cite[IV.3.7]{reyes}.

In particular, we shall take $\E$ to be \textit{the Dubuc topos}, i.e., the topos denoted by $\mathcal{G}$ in \cite{reyes} and by $\widetilde{\mathcal{B}^\op}$ in \cite{kock}.  Explicitly, $\E$ is the topos of sheaves on the opposite of the category of germ-determined, finitely generated $C^\infty$-rings, with respect to the open cover topology.  But more to the point, $\E$ is a topos equipped with an embedding
$$\iota:\mf \hookrightarrow \E$$
such that $R = \iota(\R)$ is a ring of line type satisfying the Kock-Weil axiom \cite[8.3.3]{lavendhomme}, and $\iota$ has several further pleasant properties making $(\E,\iota)$ a \textit{well-adapted model} (see \cite[III.3, III.4, III.8.4]{kock}).  We now show that any well-adapted model gives rise to an embedding of tangent categories:

\begin{proposition}\label{thm:well_ad_model_yields_str_mor_tngt_cats}
Let $\iota:\mf \hookrightarrow \E$ be a well-adapted model of SDG.
\begin{enumerate}[{\rm (i)}]
\item The embedding $\iota$ factors as $\mf \overset{\iota'}{\hookrightarrow} \X \hookrightarrow \E$, where $\X$ is the full subcategory of $\E$ consisting of microlinear objects.
\item The embedding $\iota':\mf \hookrightarrow \X$ carries the structure of a strong morphism of Cartesian tangent categories (in the sense of \cite[2.7, 2.8]{sman3}).
\end{enumerate}
\end{proposition}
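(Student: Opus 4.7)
The plan is to deduce both parts from standard properties of a well-adapted model $\iota:\mf \hookrightarrow \E$, namely that $\iota$ preserves finite products, preserves transverse pullbacks (in particular the pullbacks defining tangent bundles $TM$), carries the Cartesian space $\R^n$ to $R^n$, and preserves open covers; together with the Kock-Weil hypothesis which entails that $R = \iota(\R)$ is microlinear and satisfies the Kock-Lawvere axiom.

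For (i), I would show that for every smooth manifold $M$, the object $\iota(M)$ is microlinear. The class of microlinear objects in $\E$ is closed under finite limits and exponentials and contains $R$, so in particular $R^n = \iota(\R^n)$ is microlinear and every open subobject $\iota(U) \hookrightarrow R^n$ (for $U \subseteq \R^n$ open) is microlinear since microlinearity is a local property preserved by open subsheaves. An arbitrary smooth manifold $M$ admits an open cover by Cartesian charts $U_\alpha \cong \R^{n_\alpha}$, and under a well-adapted $\iota$ the resulting family $\{\iota(U_\alpha) \hookrightarrow \iota(M)\}$ is a covering family in $\E$; since microlinearity descends along such covers (quasi-colimits of infinitesimal objects are local), $\iota(M)$ is microlinear. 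Hence $\iota$ factors through $\X \hookrightarrow \E$, giving $\iota':\mf \to \X$.

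For (ii), the strong morphism structure requires a natural isomorphism $\alpha:\iota' T_\mf \xRightarrow{\sim} T_\X \iota'$ compatible with $p,0,+,\ell,c$, plus preservation of finite products. The latter is immediate from well-adaptedness. To construct $\alpha$, note that for $M = \R^n$ we have $\iota(TM) = \iota(\R^{2n}) = R^{2n}$ while $T_\X(\iota(M)) = [D,R^n] \cong R^n \times R^n$ by the Kock-Lawvere axiom, yielding a canonical isomorphism. For general $M$, I would first observe that $\iota(p_M:TM \to M) \cong (p_{\iota(M)}:[D,\iota(M)] \to \iota(M))$ because both sides arise from the same transverse-pullback/exponential construction applied to a cover by Cartesian charts: on each $U_\alpha$ the isomorphism is the one above, and naturality with respect to smooth transition maps $U_\alpha \cap U_\beta \to U_\beta$ (equivalently, the derivative of a smooth map coincides with the action of $[D,-]$ on its image under $\iota$, by the Kock-Lawvere axiom applied chart-wise) allows these local isomorphisms to be glued using the covering property of $\iota(M)$.

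The compatibility of $\alpha$ with $p$, $0$, $+$ is then a consequence of the fact that both the classical and the representable tangent structures present these as the obvious bundle data derivable from the ring structure on $\R$ and $R$ respectively; verifying this reduces (by naturality in $M$ and the gluing argument just described) to explicit identifications on $\R^n$, where $p$ evaluates at $0 \in D$, $0$ is the constant map, and $+$ uses the additive structure on $R$. Compatibility with $\ell$ and $c$ is the main technical point: both natural transformations in $\X$ are induced by the commutative semigroup structure $(D,\odot,s)$ on $D$ via \eqref{eq:sgrp_str_on_t_ind_by_that_on_d}, while in $\mf$ they have their classical descriptions on $T^2\R^n \cong \R^{4n}$; the needed compatibility then follows by direct inspection on $\R^n$ (essentially Taylor expansion to second order) and propagates to all of $\mf$ by the same cover argument. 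The main obstacle is organizing this local-to-global passage cleanly, but since well-adaptedness supplies exactly the preservation of open covers and transverse pullbacks needed, no new ingredients are required.
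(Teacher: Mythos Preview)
Your proposal is correct and follows the same overall architecture as the paper's proof: reduce to Cartesian spaces $\R^n$, verify the required compatibilities there, and then propagate to arbitrary manifolds via open covers (using that $\iota$ preserves open covers and transversal pullbacks).  The differences are in how each step is executed.

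For (i), the paper simply invokes the fact that $\iota(M)$ is a \emph{formal manifold} in Kock's sense and that formal manifolds are microlinear, whereas you argue directly via closure of microlinear objects under products, open subobjects, and descent along covers.  Your route works but requires you to justify that microlinearity is local in the cover topology; the paper's route is a one-line citation.

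For (ii), the paper does not construct $\alpha$ by gluing: it cites the existence of the natural isomorphism $\alpha_M:\iota(TM)\xrightarrow{\sim}[D,\iota(M)]$ for all $M$ directly from \cite[III.4.1]{kock}, and uses the covering argument only to check the compatibility of $\alpha$ with $p,0,+,\ell,c$.  More substantively, the paper handles the Cartesian-space case by observing that $\cart$ and $\Diff(\X)$ are Cartesian differential categories, that $\iota$ restricts to a product-preserving functor $\cart\to\Diff(\X)$ preserving the Jacobian derivative \cite[III.3.3]{kock}, and that the inclusion $\Diff(\X)\hookrightarrow\X$ is already known to be a strong morphism of tangent categories \cite[4.12]{sman3}; the compatibility with $\ell$ and $c$ on $\R^n$ then comes for free from this structural machinery.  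Your approach instead proposes a direct Taylor-expansion verification on $\R^n$, which is perfectly valid but replaces a citation with a computation.  Either way, the final local-to-global step is the same.
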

\begin{proof}
For any manifold $M \in \ob\mf$, the object $\iota(M)$ of $\E$ is a \textit{formal manifold} \cite[I.17]{kock} (by \cite[III.3.4, III.4.C]{kock}), so by \cite[I, Notes 2006, Footnote 27; Appendix D]{kock}), $\iota(M)$ is microlinear and (i) is proved.  Letting $T$ denote the tangent endofunctor on $\mf$, it is proved in \cite[III.4.1]{kock} that there is an isomorphism $\alpha_M:\iota(TM) \xrightarrow{\sim} [D,\iota(M)]$ natural in $M \in \mf$, recalling that $[D,-]:\X \rightarrow \X$ is the tangent endofunctor on $\X$ (\ref{sec:tngt_cat_microlin_objs}).  Also, by \cite[III.3.A]{kock}, $\iota$ preserves finite products as well as the (iterated) pullbacks that are given as part of the tangent structure on $\mf$ (\ref{defnTangentCategory}), since these are all \textit{transversal pullbacks} (as each can be described as a pullback of a submersion; see \cite{diffBundles}).  But $\X$ is closed under finite limits in $\E$ (\ref{sec:tngt_cat_microlin_objs}), so it remains only to show that $\alpha$ satisfies the diagrammatic axioms\footnote{Note that composition of functors is written in diagrammatic order in the cited source.} of \cite[2.7]{sman3}.

Letting $\cart \hookrightarrow \mf$ denote the full subcategory consisting of the \textit{Cartesian spaces} $\R^n$, we know that the Cartesian tangent structure on $\mf$ restricts to a Cartesian tangent structure on $\cart$, wherein $T(\R^n) = \R^n \times \R^n$.  Indeed, $\cart$ is a particularly simple kind of tangent category, namely (the tangent category associated to) a \textit{Cartesian differential category} \cite[4.1, 4.2]{sman3}.  The embedding $\iota$ sends the Cartesian spaces $\R^n$ to Kock-Lawvere $R$-modules $\iota(\R^n) \cong R^n$ in $\E$, where $R = \iota(\R)$ is the line object in $\E$.  The microlinear Kock-Lawvere $R$-modules in $\E$ are precisely the differential objects of $\X$ \cite[3.9]{diffBundles} and so constitute a Cartesian differential category $\Diff(\X)$ \cite[4.11]{sman3}, such that the inclusion $\Diff(\X) \hookrightarrow \X$ carries the structure of a strong morphism of Cartesian tangent categories \cite[4.12]{sman3}.  The restriction $\iota'':\cart \hookrightarrow \Diff(\X)$ of $\iota'$ preserves finite products, and $\iota''$ preserves the Jacobian derivative \cite[III.3.3]{kock} and therefore preserves Cartesian differential structure.  Therefore $\iota''$ is a strong morphism of Cartesian tangent structure, so the composite $\cart \hookrightarrow \Diff(\X) \hookrightarrow \X$ is a strong morphism of Cartesian tangent structure whose structural isomorphisms 
$$\iota(\R^n \times \R^n) = \iota(T\R^n) \xrightarrow{\sim} [D,\iota(\R^n)]$$
are the isomorphisms $\alpha_M$ with $M = \R^n$ as constructed in \cite[III.4.1]{kock}.

Hence the restriction of $\iota':\mf \hookrightarrow \X$ to $\cart$ is a strong morphism of Cartesian tangent structure.  Now for an arbitrary manifold $M \in \ob\mf$, we can choose a covering by open embeddings $e_j:U_j \hookrightarrow M$ $(j \in J)$ where the $U_j$ are Cartesian spaces.  It follows that the families $(Te_j)_{j \in J}$, $(T^2e_j)_{j \in J}$, and $(T_2e_i)_{j \in J}$ (in the notation of \ref{defnTangentCategory}) are coverings by open embeddings.  For each of the structural transformations $t \in \{p,0,+,\ell,c\}$ carried by $\mf$, with corresponding transformation $t'$ in $\X$, we can now use the fact that $\alpha_{U_j}$ commutes with $t_{U_j},t'_{U_j}$ for each $j \in J$ to show that $\alpha_M$ commutes with $t_M,t'_M$.
\end{proof}

\begin{theorem}\label{thm:classical_derham}
The classical de Rham complex $\Omega^{\textnormal{dR}}_\bullet(M)$ of a smooth manifold $M$ is isomorphic to the complex $\Omega_\bullet(M)$ of singular forms on $M$ with values in $\R$ (\ref{def:sing_nform_compl}), where $M$ is considered as an object of the tangent category $\mf$.  In symbols,
$$\Omega^{\textnormal{dR}}_\bullet(M) \;\cong\; \Omega_\bullet(M)\;.$$
\end{theorem}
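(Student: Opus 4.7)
The plan is to chain together three isomorphisms of cochain complexes. Let $\iota : \mf \hookrightarrow \E$ be the canonical embedding of $\mf$ into the Dubuc topos $\E$, factoring as $\mf \xrightarrow{\iota'} \X \hookrightarrow \E$ through the tangent category of microlinear objects (\ref{thm:well_ad_model_yields_str_mor_tngt_cats}), and write $R = \iota(\R)$. I will establish
\[
\Omega^{\textnormal{dR}}_\bullet(M) \;\cong\; \Omega^{\textnormal{sdg}}_\bullet(\iota M) \;\cong\; \Omega_\bullet(\iota' M) \;\cong\; \Omega_\bullet(M).
\]
The first isomorphism is Reyes's theorem \cite[IV, Proposition 3.7]{reyes}, available because $\iota$ is well-adapted and $R$ satisfies the Kock--Weil axiom. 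The second is an instance of \ref{thm:singf_vs_sdgsingf}, applied to the object $\iota'M$ of the Cartesian closed tangent category $\X$ and the subtractive differential object $R$ of $\X$.

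For the third isomorphism I would use that $\iota' : \mf \hookrightarrow \X$ is a fully faithful strong morphism of Cartesian tangent categories (\ref{thm:well_ad_model_yields_str_mor_tngt_cats}) with $\iota'(\R) = R$. Iterating the coherent natural isomorphism $\alpha : \iota' T \xrightarrow{\sim} T\iota'$ yields isos $\alpha^n_M : \iota' T^n M \xrightarrow{\sim} T^n \iota' M$, and composition with $\iota'$ together with $(\alpha^n_M)^{-1}$ gives a natural bijection $\mf(T^n M, \R) \xrightarrow{\sim} \X(T^n \iota' M, R)$. The defining diagram of a sector form (\ref{def:sector_nform}) involves only the degeneracies $\ell^n_i$, symmetries $c^n_i$, and the lift $\lambda$ of the differential object, all of which are preserved by $\iota'$ up to $\alpha^n$, so the bijection restricts to $\Psi_n(M) \xrightarrow{\sim} \Psi_n(\iota' M)$. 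Because the principal projection $\hat p$ is uniquely determined by $\lambda$, $p$, and the pullback $\nu$ of \ref{def:diff_objs} (cf. \ref{prop:diffObjs}), it is also preserved; consequently the fundamental derivative $\delta^n_1 = T(-)\s \hat p$ transports, as do all codegeneracies, symmetries, and hence all cofaces. We obtain an isomorphism of symmetric cosimplicial abelian groups $\Psi(M) \xrightarrow{\sim} \Psi(\iota' M)$ which restricts to the alternating subcomplexes, yielding the third isomorphism.

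The main obstacle is verifying this last step rigorously: one must show that the structural isomorphisms $\alpha^n_M$ intertwine the various components $\ell^n_i M$ and $c^n_i M$ on both sides, and that $\iota'$ sends the differential-object structure on $\R$ to that on $R$ in the strong sense that $\hat p$ is preserved. Both amount to chasing the axioms of a strong morphism of tangent categories through the inductive definitions of $\ell^n_i = T^{i-1}\ell T^{n-i}$ and $c^n_i = T^{i-1} c T^{n-i-1}$ and through the equational characterization of $\hat p$; no new geometric content is needed beyond what \ref{thm:well_ad_model_yields_str_mor_tngt_cats} already provides.
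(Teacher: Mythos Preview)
Your proposal is correct and follows essentially the same approach as the paper: the paper chains exactly the three isomorphisms $\Omega_\bullet(M) \cong \Omega_\bullet(\iota'(M)) \cong \Omega^{\textnormal{sdg}}_\bullet(\iota(M)) \cong \Omega^{\textnormal{dR}}_\bullet(M)$ using \ref{thm:well_ad_model_yields_str_mor_tngt_cats}, \ref{thm:singf_vs_sdgsingf}, and \cite[IV.3.7]{reyes}. Your discussion of the third isomorphism actually supplies more detail than the paper, which simply asserts $\Omega_\bullet(M) \cong \Omega_\bullet(\iota'(M))$ as an immediate consequence of $\iota'$ being a fully faithful strong morphism of Cartesian tangent categories.
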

\begin{proof}
Taking $\E$ to be the Dubuc topos, we deduce by \ref{thm:well_ad_model_yields_str_mor_tngt_cats} that the associated embedding $\iota':\mf \hookrightarrow \X$ is a strong morphism of Cartesian tangent structure, and we can invoke \ref{thm:singf_vs_sdgsingf} and \cite[IV.3.7]{reyes} in order to compute that
$$\Omega_\bullet(M) \cong \Omega_\bullet(\iota'(M)) \cong \Omega^{\textnormal{sdg}}_\bullet(\iota(M)) \cong \Omega^{\textnormal{dR}}_\bullet(M)$$
as complexes, where $\Omega_\bullet(\iota'(M))$ is the complex of singular forms on $\iota'(M)$ in $\X$ with values in $R = \iota'(\R)$.
\end{proof}

\section{Conclusions and future work}\label{sec:futureWork}

In this paper, we have shown that not only do tangent categories support a generalization of de Rham cohomology, but that they support a second cohomology, the cohomology of sector forms; furthermore, sector forms have a rich algebraic structure that goes beyond this cohomology.  There are many possible extensions of this work.
\begin{itemize}
	\item We have shown that tangent categories possess a cohomology of sector forms.  Even in the canonical case of smooth manifolds, this may be distinct from the ordinary de Rham cohomology of classical differential forms; further investigation is required to compare these cohomologies.
	\item The relationship between classical differential forms and singular forms in an arbitrary tangent category needs to be better understood.  In general, one would expect that any object $M$ which is ``locally a differential object'' would have the property that classical differential forms on $M$ and singular forms on $M$  would be in bijective correspondence, but this requires detailed work to check.  Another possibility is that differential forms and singular forms may correspond if $M$ possesses a ``symmetric $n$-connection'' \cite{diffFormsLavendhommeNishimura}, suitably defined in a tangent category.  
	\item An important operation on differential forms is the wedge product.  Since this involves multiplication in $\mathbb{R}$, in the setting of tangent categories, one would need the coefficient object $E$ to have ring structure.  Once such a generalized wedge product is defined, one could consider how such an operation interacts with the co-face, symmetry, and co-degeneracy maps.
	\item It is a well-known result that the exterior derivative is the unique map from $n$ forms to $n+1$ forms satisfying certain algebraic properties \cite[Proposition 7.11]{spivakVol1}.   It would be interesting to determine for which tangent categories this uniqueness result holds.  
	\item It is not clear to what cohomology theories the cohomologies found here correspond in algebraic geometry (for example, in the category of schemes).  The cohomologies may recover an existing cohomology theory or represent a new one; further investigation is required.   
\end{itemize}
Finally, as mentioned in the introduction, sector forms generalize covariant tensors, and because of this, White writes that ``the \emph{calculus} of [sector forms] can serve as a unified framework for the presentation of classical local Riemannian geometry, and that it can lead to new methods of analysis in modern differential geometry'' \cite[pg. x]{white}.  The results presented here on sector forms contribute to this calculus by means of a methodology which is applicable more generally.  

\bibliography{simplicialDeRham}

\end{document}